\pdfoutput=1
\documentclass{article}
\usepackage{amssymb}
\usepackage{amsmath}
\usepackage[table]{xcolor}
\usepackage{float}
\usepackage{diagbox}
\usepackage{graphicx}
\usepackage{amsthm}
\usepackage{enumerate}
\usepackage{pxfonts}
\usepackage{xassoccnt}
\usepackage{nameref}
\usepackage[english]{babel}
\usepackage[numbers]{natbib}
\usepackage{hyperref}
\usepackage{cleveref}
\usepackage{mathrsfs}
\usepackage{thmtools}
\usepackage{thm-restate}
\usepackage{tikz}
\usetikzlibrary{decorations.pathreplacing,calligraphy}
\usepackage{tikz-cd}
\usetikzlibrary{automata, arrows.meta, positioning}
\usepackage{makecell}
\usepackage{lineno}
\usetikzlibrary{calc}
\usepackage{relsize}
\tikzset{fontscale/.style = {font=\relsize{#1}}}

\newtheorem{theorem}{Theorem}[section]
\crefname{theorem}{theorem}{theorems}
\newtheorem{proposition}{Proposition}[section]
\crefname{proposition}{proposition}{propositions}
\newtheorem{lemma}{Lemma}[section]
\crefname{lemma}{lemma}{lemmas}
\newtheorem{corollary}{Corollary}[section]
\crefname{corollary}{Corollary}{corollaries}
\newtheorem{remark}{Remark}[section]
\crefname{remark}{remark}{remarks}
\newtheorem{example}{Example}[section]
\crefname{example}{example}{examples}
\newtheorem{definition}{Definition}[section]
\crefname{definition}{definition}{definitions}

\DeclareCoupledCountersGroup{theorems}
\DeclareCoupledCounters[name=theorems]{theorem,proposition,lemma,remark,example,corollary,definition}

\begin{document}

\title{On the minimal components of substitution subshifts}
\author{Raphaël Henry\footnote{Aix Marseille Univ, CNRS, I2M, 3 place Victor Hugo, Case 19, 13331 Marseille Cedex 3, France}}
\date{Published in Theoretical Computer Science \url{https://doi.org/10.1016/j.tcs.2025.115517}}

\maketitle

\pagenumbering{arabic}
\setcounter{page}{1}

\begin{abstract}
	In this paper we study substitutions on $A^\mathbb{Z}$ where $A$ is a finite alphabet. We precisely characterize the minimal components of substitution subshifts, give an optimal bound for their number and describe their dynamics. The explicitness of these results provides a method to algorithmically compute and count the minimal components of a given substitution subshift.
\end{abstract}

\section*{Introduction}

In symbolic dynamics, substitutions on a finite alphabet $A$ are a rich way to generate infinite words, for a general background see \cite{AS} or the upcoming book \cite{BDP}. In particular, \textit{substitution subshifts} on $A^\mathbb{Z}$ have been extensively studied, for a general overview see \cite{Queffelec} and for various computability results see \cite{BPR2}.

A very natural property of subshifts is \textit{minimality}, meaning that the orbit of every point is dense. Many results were proven in the framework of minimal subshifts, for instance, Damanik and Lenz \cite{DL} proved that a substitution subshift is minimal if and only if it is linearly repetitive. As a simple way to construct minimal subshifts, it is well-known that \textit{primitive} substitutions generate minimal subshifts (see for example \cite{Queffelec}), like the Fibonacci substitution $\sigma_F : 0 \mapsto 01, 1 \mapsto 0$. Some non-primitive substitutions also generate minimal subshifts, like the Chacon substitution $\sigma_C : 0 \mapsto 0010, 1 \mapsto 1$, which is even \textit{non-growing} because the letter $1$ is \textit{bounded}. Acknowledging that it is crucial to distinguish growing and bounded letters, Shimomura \cite{Shimomura} introduced a weaker version of primitivity, called \textit{l-primitivity}, that means being primitive when removing the bounded letters. Moreover, in a recent interest towards non-minimal substitution subshifts, Maloney and Rust \cite{MR} identified that \textit{tameness}, a combinatorial property of the substitution, is essential for minimality (this \textit{tame} vocabulary must not be confused with a property of general dynamical systems related to semigroups, which is not related to our topic). The idea is that a \textit{wild} (i.e., not tame) substitution will generate an infinite periodic word, which might prevent minimality. In his same article, Shimomura showed that these two properties characterize the minimal substitution subshifts: a substitution subshift $X_\sigma$ is minimal if and only if there exists a tame and l-primitive substitution $\sigma'$ such that $X_\sigma = X_{\sigma'}$.

Now that minimal substitution subshifts are well-understood, the next step is to describe the structure of non-minimal substitution subshifts. In this direction, Béal, Perrin and Restivo \cite{BPR2} showed that any substitution subshift is \textit{quasi-minimal}, meaning that it has a finite number of subshifts; and Bezuglyi, Kwiatowski and Medynets \cite{BKM} showed that any growing substitution subshift on $d$ letters has at most $d$ minimal components. The number of subshifts - or the number of minimal components - can then be seen as a measure of the complexity of the subshift, and this paper is dedicated to the study of the minimal components of substitution subshifts.

For growing substitutions, we have encouraging results. The upper bound of their number in \cite{BKM} is already important, but it does not describe the minimal components themselves. On that note, one known family of minimal components is given by what Durand called \textit{main sub-substitutions}: they are primitive substitutions obtained by restricting a suitable power of the original substitution to a suitable subalphabet. Each of them generates a minimal component, and they are thought to generate all the minimal components. To support this idea, the analog of main sub-substitutions was introduced by Cortez and Solomyak \cite{CS} in the framework of tiling substitutions on $\mathbb{R}^d$, and they show that the minimal components are exactly the subshifts generated by these sub-substitutions. In particular, it is remarkable that every minimal component is itself a substitution system.

For non-growing substitutions, another family of minimal components arises from wildness. The idea, mentionned earlier, originates from the closely related framework of D0L-systems, where Ehrenfeucht and Rozenberg \cite{ER} begin the investigation of infinite repetitions in their language and Klouda and Starosta \cite{KS} provide effective results. More precisely, if a substitution is wild, then there exists a growing letter that generates a periodic word over bounded letters to its left or to its right. Notably, this was already used by Pansiot \cite{Pansiot} to characterize purely morphic words with factor complexity $\Theta(n^2)$. With the work of Maloney and Rust \cite{MR} and Shimomura \cite{Shimomura}, this transposes to the framework of substitution subshifts  where, in particular, each such periodic word provides a minimal component.

The main contribution of this paper is the characterization of the minimal components of any substitution subshift. In a first time, we generalize main sub-substitutions to be tame and l-primitive substitutions obtained by restricting a suitable power of the original substitution to a subalphabet. Our definition, however, is more effective than the one of Durand as it relies on finding the suitable subalphabets by constructing a specific oriented graph. Thanks to Shimomura's result, the main sub-substitutions provide minimal components containing growing letters. We then use the fact that the substitution induces a permutation on the subshifts to show that these components are exactly all the minimal components containing growing letters, which we call \textit{tame minimal components}. In a second time, we build on results of \cite{KS} and \cite{BPR2} to define precisely and effectively the periodic words over bounded letters produced by a wild substitution. In parallel, we adapt the notion of 1-blocks introduced in \cite{Devyatov} to obtain a decomposition of the words over bounded letters. This shows that these single periodic orbits are exactly all the minimal components over bounded letters, which we call \textit{wild minimal components}. To complete the structure of the minimal components, we also describe how the substitution acts on them: we show that the substitution induces a permutation on the minimal components that can be read from the directed graphs introduced in our proof.

In our effort to make the characterization as effective as possible, we obtain that the number of minimal components of a given substitution subshift is computable. This is paired with a Python implementation of the computation of minimal components at \url{https://codeberg.org/RaphaelHENRY/MinimalComponents.git}. We are also able to bound the number of minimal components as a function of the alphabet size, generalizing the previously known bound to the non-growing case. Finally, as a way to apply our results to various examples, we compute the minimal components for all substitutions on two letters.

We conclude this paper by opening the study to what we call \textit{irreducible components}
and briefly discussing potential generalizations of our results to other types of subshifts.

\section{Preliminaries}\label{prelim}

\subsection{Notations}

\subsubsection*{Words}

Given a finite alphabet $A$, $A^*$ denotes the set of all words on $A$, in particular $\varepsilon$ is the empty word and we define $A^+:=A^*\backslash\{\varepsilon\}$. For $u=a_0 a_1...a_{n-1} \in A^*$, we write $\lvert u \rvert := n$ the \textit{length} of $u$. To simplify some expressions, we will use the following notation: if $(u_i)_{0 \leq i \leq n-1} \in (A^*)^n$, we write the concatenation from left to right as
\begin{align}\label{concat}
	\displaystyle \bigsqcup_{i=0}^{n-1} u_i := u_0 u_1...u_{n-2}u_{n-1} ~~~~~~~~& \textrm{with the convention that } \displaystyle \bigsqcup_{i=0}^{-1} u_i := \varepsilon.
\end{align}

If words $u,u',v,w \in A^*$ are such that $w=uvu'$, we say that $v$ is a \textit{factor} of $w$ and we write $v \sqsubset w$. If $u=\varepsilon$, $v$ is a \textit{prefix} of $w$, and if $u'=\varepsilon$, $v$ is a \textit{suffix} of $w$. If $u,v \in A^*$ and there exists $w,w' \in A^*$ such that $u=ww'$ and $v=w'w$, we say that $u$ is a \textit{cyclic shift} of $v$. A word $u \in A^+$ is \textit{primitive} if $u=v^k$ for some $v \in A^+$ and $k \geq 1$ implies that $k=1$. The shortest word $v$ such that $u=v^k$ for some $k \geq 1$ is a primitive word called the \textit{primitive root} of $u$.

Let $A^{\mathbb{Z}}$ be the set of all two-sided sequences of letters of $A$. For $x \in A^{\mathbb{Z}}$ and $i-1 \leq j \in \mathbb{Z}$, the word $x_i...x_j$ is a \textit{factor} of $x$ and we write $x_i...x_j \sqsubset x$, with the convention that $x_i...x_{i-1} = \varepsilon$. If $u \in A^+$, we write the infinite word to the right $u^\omega := uuu...$ and the infinite word to the left ${}^\omega u := ...uuu$. By writing a dot right to the left of the letter of index 0, we define the bi-infinite words ${}^\omega u^\omega := {}^\omega u.u^\omega$ and ${}^\omega u v w^\omega := {}^\omega u.vw^\omega$.

\subsubsection*{Substitutions}

A map $\varphi : A^* \rightarrow A^*$ is a \textit{morphism} if for all $u,v \in A^*$, $\varphi(uv) = \varphi(u)\varphi(v)$. In this case, we write $\varphi : A \rightarrow A^*$ and the language of $\varphi$ is the set
\begin{equation*}
	\mathscr L(\varphi) := \left\{u \in A^* ~\middle|~ \exists a \in A, \exists n \geq 0, u \sqsubset \varphi^n(a)\right\}.
\end{equation*}
If a morphism $\sigma$ is \textit{non-erasing}, i.e., for all $a \in A$, $\sigma(a) \neq \varepsilon$, then we say it is a \textit{substitution} instead and we write $\sigma : A \rightarrow A^+$.

We say that a substitution $\sigma : A \rightarrow A^+$ is \textit{primitive} if there exists $m \geq 1$ such that, for every $a,b \in A$, $a \sqsubset \sigma^m(b)$. An equivalent definition is asking the incidence matrix of the substitution (that is, the matrix with coefficients $m_{a,b} = \left\lvert\sigma(b)\right\rvert_a$ for $a,b \in A$) to be primitive, but in this paper we will mostly use the combinatorial definition.

If $D \subset A$ is a subalphabet such that $\sigma(D) \subset D^+$, we have the restriction $\sigma|_D : D \rightarrow D^+$.

\subsubsection*{Bounded and growing letters}

Let $\sigma : A \rightarrow A^+$ is a substitution. If $a \in A$ is such that the sequence $\left( \left\lvert \sigma^n(a) \right\vert \right)_{n \geq 0}$ is bounded, we say that $a$ is \textit{bounded}, otherwise we say that $a$ is \textit{growing}. Once the substitution is fixed, $B$ always denotes the set of bounded letters, and $C$ always denotes the set of growing letters. In particular, if $a \in B^+$, then $\sigma(a) \in B^+$, and if $a \in C$, then $\sigma(a)$ contains a growing letter.

If $B=\emptyset$, we say that $\sigma$ is \textit{growing}. If $u \in A^*$, we define $alph_C(u) := \left\{c \in C ~\middle|~ c \sqsubset u\right\}$. If $X \subset A^\mathbb{Z}$, we define the alphabet of growing letters in $X$
\begin{equation*}
	alph_C(X) := \left\{c \in C ~\middle|~ \exists x \in X, c \sqsubset x\right\}.
\end{equation*}

\subsubsection*{Subshifts}

We define the left shift $T : A^{\mathbb{Z}} \rightarrow A^{\mathbb{Z}}$ where $T(x)_i = x_{i+1}$. A non-empty set $X \subset A^{\mathbb{Z}}$ is a \textit{subshift} if it is closed for the prodiscrete topology on $A^\mathbb{Z}$ and satisfies $T(X)=X$. If $x \in A^\mathbb{Z}$, the smallest subshift containing $x$ is the set
\begin{equation*}
	X(x) := \overline{\left\{T^k(x) \mid k \in \mathbb{Z}\right\}}.
\end{equation*}
In particular, if $u \in A^+$, $X({}^\omega u^\omega)$ is a single periodic orbit. The language of the subshift $X$ is the set
\begin{equation*}
	\mathscr L(X) := \left\{u \in A^* ~\middle|~ \exists x \in X, u \sqsubset x\right\}.
\end{equation*}

A word $u$ is said to occur with \textit{bounded gaps} in $X$ if there exists $L \geq 1$ such that, for all $v \in \mathscr L(X)$, if $\lvert v \rvert \geq L$ then $u \sqsubset v$.

Given a substitution $\sigma : A \rightarrow A^+$ where $C \neq \emptyset$, the associated \textit{substitution subshift} is the set
\begin{equation*}
	X_{\sigma} := \left\{x \in A^\mathbb{Z} ~\middle|~ \forall u \sqsubset x, u \in \mathscr L(\sigma)\right\}.
\end{equation*}
In particular, $\mathscr L(X_\sigma) \subset \mathscr L(\sigma)$. If $A \subset \mathscr L(X_\sigma)$, $\sigma$ is \textit{admissible}.

\begin{example}\label{ex1}
	The substitution $\sigma : 0 \mapsto 12, 1 \mapsto 22, 2 \mapsto 11$ is not admissible because $X_\sigma = X({}^\omega 12^\omega) \cup X({}^\omega 21^\omega)$ and $0 \notin \mathscr L(X_\sigma)$.
\end{example}

\subsubsection*{Minimality}

A subshift $X$ is \textit{minimal} if it does not contain any subshift other than itself.

\begin{remark}\label{minimal}
	If $X$ is a subshift, then the three following properties are equivalent:
	
	(i) $X$ is minimal.
	
	(ii) For every $x \in X$, $X(x) = X$.
	
	(iii) Every word of $\mathscr L(X)$ occurs with bounded gaps in $X$.
	
	A proof can be found in \cite{Pytheas} for example.
\end{remark}

If $X$ is a subshift, a minimal subshift $Y \subset X$ is called a \textit{minimal component} of $X$. If $X$ has a unique minimal component, we say it is \textit{essentially minimal}, which is not equivalent to being minimal as illustrated with \Cref{ex2}. More generally, if $X$ contains a finite number of subshifts, we say it is \textit{quasi-minimal}. In particular, every substitution subshift is quasi-minimal.

\begin{proposition}[{\cite[Proposition 10.8]{BPR2}}]\label{quasiminimal}
	Let $\varphi : A \rightarrow A^*$ be a morphism. Then $X_\varphi$ is quasi-minimal.
\end{proposition}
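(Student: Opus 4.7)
My plan is to bound the number of subshifts of $X_\varphi$ by reducing its structure to a finite combinatorial enumeration tied to the action of $\varphi$ on $A$, in three steps of increasing difficulty.

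\emph{Step 1 (Reduction to recurrent letters).} Call $a \in A$ \emph{recurrent} if $a$ appears in $\varphi^n(b)$ for some $b \in A$ and infinitely many $n$, and let $R \subseteq A$ be the set of recurrent letters. I would first show that $X_\varphi \subseteq R^{\mathbb Z}$: a letter $a \notin R$ only appears in $\mathscr L(\varphi)$ as a factor of $\varphi^n(b)$ for boundedly many $n$, so factors of $\mathscr L(\varphi)$ containing $a$ have uniformly bounded length, contradicting the existence of arbitrarily long factors of any bi-infinite $x \in X_\varphi$ through a putative occurrence of $a$. Since $\varphi$ restricts to a morphism $R \to R^*$, we may replace $A$ by $R$.

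\emph{Step 2 (Finitely many minimal subshifts).} By Zorn's lemma every nonempty subshift contains a minimal one, so it is at least necessary to bound the number of minimal subshifts of $X_\varphi$. I would represent each point $y$ of a minimal component $M$ as a suitable limit of centered iterates $\varphi^n(a)$, and use pigeonhole on the finite alphabet $A$ to argue that the admissible one-sided ``limit patterns'' (the asymptotic left and right behaviors, determined by a chosen letter together with the positional data inside $\varphi^n(a)$) fall into finitely many equivalence classes. Since $M$ is determined by any such $y$ up to shift, and $y$ is encoded by its pair of limit patterns, we get finitely many $M$.

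\emph{Step 3 (Finitely many subshifts overall).} Each subshift $Y \subseteq X_\varphi$ is determined by its language $\mathscr L(Y)$. I would argue that $\mathscr L(Y)$ is controlled by a finite amount of data: the minimal components contained in $Y$ (a subset of those enumerated in Step 2) together with the ``transitional'' factors that link them inside $Y$, which are themselves drawn from a finite $\varphi$-combinatorial catalogue. A cleaner route would be to show directly that every $Y$ is itself of the form $X_{\varphi'}$ for some derived morphism $\varphi'$ on an appropriate subalphabet (or a finite union of such), and that only finitely many such $\varphi'$ arise up to the equivalence that preserves the generated subshift.

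The main obstacle lies in Step 3: bounding \emph{all} subshifts, and not only the minimal ones, requires a genuine handle on the heteroclinic points between minimal components. The rigidity of the morphism structure is what ultimately makes this finite, since any such transitional point must arise from the iterative dynamics of $\varphi$ rather than from arbitrary patching, but converting this intuition into a clean combinatorial bound is the delicate part of the argument.
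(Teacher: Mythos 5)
First, a point of reference: the paper does not prove this proposition at all — it is quoted from Béal--Perrin--Restivo \cite{BPR2}, so your proposal can only be judged against what a complete argument would have to contain. Judged that way, it has a genuine gap. Step 1 is fine (a non-recurrent letter can only occur in factors of bounded length, hence not in a bi-infinite point, and $\varphi(R)\subseteq R^*$), but it is only a normalization. Step 2 is already not a proof: the assertion that each point of a minimal component arises as a limit of ``centered'' iterates $\varphi^n(a)$ and is encoded by a pair of limit patterns falling into \emph{finitely many} equivalence classes is exactly what would need to be established; pigeonhole on the finite alphabet does not give it, because the positional data inside $\varphi^n(a)$ ranges over unboundedly many positions and you never say what the equivalence is or why it has finitely many classes. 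A workable route for this step is the dichotomy used throughout the present paper: a minimal component either contains a growing letter, in which case it is tied to a periodic subalphabet of $C$ (and there are only finitely many subsets of $A$, cf.\ the minimal alphabets of \Cref{minalph}), or it lies in $B^{\mathbb Z}$, in which case it is a single periodic orbit of bounded period (cf.\ \Cref{uvw,XBZ}). None of this is in your sketch.

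The decisive gap is Step 3. Quasi-minimality is finiteness of \emph{all} subshifts of $X_\varphi$, not only the minimal ones, and this is precisely the part you leave at the level of intuition. The claim that the ``transitional'' factors linking minimal components inside an arbitrary subshift $Y\subseteq X_\varphi$ are drawn from a finite $\varphi$-combinatorial catalogue is asserted, not argued — a subshift is determined by its language, and nothing in the sketch bounds the possible languages or the possible heteroclinic orbit closures. The alternative route you mention, that every subshift of $X_\varphi$ is itself of the form $X_{\varphi'}$ for a derived morphism (or a finite union of such), is far from obvious and would itself be a substantial theorem requiring proof; you cannot invoke it as a shortcut. As it stands, the proposal is a plausible programme whose crucial step — the one that makes the proposition true — is missing, so it does not constitute a proof.
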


Note that the same result had already been shown with various assumptions on the substitution, see \cite[Lemma 5.13]{MR} or \cite[Proposition 5.14]{BSTY}.

\subsubsection*{D0L-systems}

$L$-systems are a large class of objects that define formal languages, they were initially introduced by Lindenmayer to study mathematically the development of simple filamentous organisms. For a general overview one can refer to \cite{RS}.

In particular, a D0L-system is a triplet $G = (A, \varphi, w)$ where $A$ is a finite alphabet, $\varphi : A \rightarrow A^*$ is a morphism and $w$ is a word in $A^+$; and the associated language is the set
\begin{equation*}
	\mathscr L(G) := \left\{u \in A^* ~\middle|~ \exists n \geq 0, u \sqsubset \varphi^n(w) \right\}.
\end{equation*}

\subsection{Minimality and beyond}

\subsubsection{Characterization of minimality}

The minimality of substitution subshifts relies on two properties of the substitution, the first one being a weaker version of primitivity.

\begin{definition}
	We say that a substitution $\sigma$ is \textit{l-primitive} if there exists $n \geq 1$ such that, for all $a,b \in C$, $a \sqsubset \sigma^n(b)$.
\end{definition}

Note that for growing substitutions, l-primitivity is the same as primitivity.

\begin{remark}\label{enough}
	The l-primitivity of a substitution depends only on its incidence matrix, but it is not sufficient to ensure minimality, because an l-primitive substitution that generates a minimal subshift can have the same incidence matrix as an l-primitive substitution that does not generate a minimal subshift. This is illustrated by the following example.
\end{remark}

\begin{example}\label{exWild}
	Consider the Chacon substitution $\sigma : 0 \mapsto 0010, 1 \mapsto 1$ and the substitution $\sigma' : 0 \mapsto 0001, 1 \mapsto 1$. For both we have $C = \{0\}$ and $B = \{1\}$, they have the same incidence matrix and they are both l-primitive. However, $X_\sigma$ is known to be minimal but $\{{}^\omega 1^\omega\} \subsetneq X_{\sigma'}$ so $X_{\sigma'}$ is not minimal.
\end{example}

In order to avoid the situation described in \Cref{enough}, a second property is needed. First, if $\sigma : A \rightarrow A^+$ is a substitution, we say that a letter $c \in C$ is \textit{left-isolated} (resp. \textit{right-isolated}) if there exist $n \geq 1$, $u \in B^+$ and $v \in A^*$ such that $\sigma^n(c) = ucv$ (resp. $\sigma^n(c) = vcu$). We define $C_{liso}$ (resp. $C_{riso}$) the set of left-isolated (resp. right-isolated) letters, and $C_{niso}$ the set of growing letters that are neither left- nor right-isolated.

\begin{definition}
	We say that a substitution $\sigma$ is \textit{tame} if $C_{liso} = C_{riso} = \emptyset$, otherwise we say that $\sigma$ is \textit{wild}.
\end{definition}

Note that growing substitutions are always tame.

\begin{example}
	Following \Cref{exWild}, we have that $\sigma$ is tame and $\sigma'$ is wild.
\end{example}

Now we have all the tools to characterize the minimal substitution subshifts.

\begin{theorem}[{\cite[Theorem A]{Shimomura}}]\label{Shimomura}
	Let $\sigma : A \rightarrow A^+$ be a substitution.
	
	(i) Suppose that $\sigma$ is tame and l-primitive. Then, $X_{\sigma}$ is minimal.
	
	(ii) Suppose that $X_{\sigma}$ is minimal and is not a single periodic orbit. Then, $\sigma$ is tame, and there exists a unique subalphabet $D \subset A$ and a restriction $\sigma|_D : D \rightarrow D^+$ such that $\sigma|_D$ is tame, l-primitive, and $X_\sigma=X_{\sigma|_D}$.
\end{theorem}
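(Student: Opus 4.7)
The plan is to prove the two parts by separate arguments, both relying on Remark 1.1(iii) — that minimality is equivalent to every word of $\mathscr L(X)$ occurring with bounded gaps.

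\textbf{Part (i).} Assume $\sigma$ is tame and l-primitive. I would show that every $u \in \mathscr L(X_\sigma)$ appears with bounded gaps in $X_\sigma$. The first step is to produce, for any $u \in \mathscr L(X_\sigma) \subset \mathscr L(\sigma)$, an integer $N \geq 0$ and a growing letter $c \in C$ with $u \sqsubset \sigma^N(c)$: if $u \sqsubset \sigma^n(a)$ and $a \in B$ we can replace $a$ by any growing letter that generates $a$ after iteration (using $C \neq \emptyset$ together with the structure of $\sigma$). The second step invokes l-primitivity: there is $M \geq 1$ such that for every $d \in C$, $c \sqsubset \sigma^M(d)$, hence $u \sqsubset \sigma^{N+M}(d)$ for every growing $d$. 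The third, and crucial, step is a lemma stating that if $\sigma$ is tame then there is a uniform constant $K$ such that in $\sigma^n(c)$ (for every growing $c$ and every $n \geq 1$) any maximal run of bounded letters has length at most $K$; this is proved by induction on $n$, using that any unbounded prefix/suffix of bounded letters around some growing letter in $\sigma^n(c)$ would, upon iteration and a pigeonhole argument on $C$, force some letter of $C$ to be left- or right-isolated. Combining these three steps, growing letters occur with bounded gaps in any $x \in X_\sigma$ with $alph_C(x) \neq \emptyset$, and each growing letter induces an occurrence of $u$ through its $\sigma^{N+M}$-expansion, giving bounded gaps for $u$.

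\textbf{Part (ii).} Assume $X_\sigma$ is minimal and not a single periodic orbit. First I would establish tameness by contradiction. If some $c \in C$ is left-isolated, write $\sigma^n(c) = u c v$ with $u \in B^+$; iterating, the bounded prefix preceding the designated copy of $c$ in $\sigma^{kn}(c)$ has length tending to infinity, so a limit point in $A^{\mathbb Z}$ provides $x \in X_\sigma$ whose left-infinite part lies in $B^{-\mathbb N}$. Then $c$ appears at a fixed position in $x$ but never to the far left, violating the bounded-gap criterion; since $X_\sigma$ is not a single periodic orbit, we cannot have $x \in B^{\mathbb Z}$ either. The right-isolated case is symmetric. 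Second, I would construct $D$ by setting $D := alph(X_\sigma)$, which is independent of the chosen $x \in X_\sigma$ by minimality. Passing to a suitable power $\sigma^k$ if necessary to absorb transient letters, one checks $\sigma^k(D) \subset D^+$ (a letter in $D$ occurs in some $x \in X_\sigma$, and a sufficiently long $\sigma^k$-return word around it stays in $\mathscr L(X_\sigma) \subset D^*$). The equality $X_\sigma = X_{\sigma|_D}$ follows since $X_\sigma \subset D^{\mathbb Z}$ and all factors of elements of $X_\sigma$ appear in images of $\sigma|_D$. For l-primitivity of $\sigma|_D$: given $d, d' \in D \cap C$, minimality forces $d$ to occur with bounded gaps; for $M$ large, $\sigma^M(d')$ has length exceeding this gap, so $d \sqsubset \sigma^M(d')$ uniformly in $d'$. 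Tameness of $\sigma|_D$ is inherited from $\sigma$ directly. Uniqueness follows because any valid $D'$ must satisfy $D' = alph(X_{\sigma|_{D'}}) = alph(X_\sigma) = D$.

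\textbf{Main obstacle.} I expect the hardest step to be the uniform-gap lemma for growing letters in Part (i), because tameness is a statement about every iteration $\sigma^n$ and one must propagate it into a single constant $K$ valid for all $n$ — this seems to require a careful orbital argument on $C$ combined with pigeonhole. The secondary difficulty is ensuring that the subalphabet $D$ in Part (ii) is genuinely invariant under $\sigma$ and not just under a power: showing that transient letters (those appearing in some $\sigma^n(a)$ but not in any $x \in X_\sigma$) can be cleanly excised without disturbing $X_\sigma$ itself is where the proof must interact delicately with the definition of the substitution subshift. The rest of the argument — deducing tameness from minimality, and the equalities of languages — follows standard bounded-gap and invariance reasoning.
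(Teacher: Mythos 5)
This theorem is quoted by the paper from Shimomura (Theorem A); the paper gives no proof of it, so your proposal can only be judged on its own merits and against the standard arguments. Your Part (i) follows the right strategy (bounded gaps via the ``tame $\Rightarrow$ bounded runs of bounded letters'' lemma, which is exactly \Cref{Lbounded} of the paper and is indeed the real content), but two steps need repair. First, your reduction of $u \in \mathscr L(X_\sigma)$ to a factor of $\sigma^N(c)$ with $c$ growing is justified by ``replacing $a$ by a growing letter that generates $a$'', which is not available in general (a bounded letter need not occur in the image of any growing letter); the correct move is to extend $u$ inside some $x \in X_\sigma$ to a factor longer than $\max_{b \in B}\sup_n \lvert \sigma^n(b)\rvert$, which forces the covering letter to be growing. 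Second, the final combination is stated as if growing letters of $x$ itself carried $\sigma^{N+M}$-expansions inside $x$; you need the desubstitution lemma (\Cref{phiminus}, iterated: $x = T^m\sigma^{N+M}(y)$ with $y \in X_\sigma$) and then apply the bounded-gap property of growing letters to $y$, whose letter-images tile $x$.

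Part (ii) contains genuine gaps. (a) Tameness: the limit point ``with $c$ at a fixed position and a bounded left tail'' need not belong to $X_\sigma$, because the marked occurrence of $c$ in $\sigma^{kn}(c)$ may admit no right-extension inside the language (the word $v$ in $\sigma^n(c)=ucv$ may be empty, $c$ may only ever occur as a last letter, and $c$ need not even lie in $\mathscr L(X_\sigma)$ — non-admissible letters exist, cf.\ \Cref{ex1}). The robust argument is the one behind \Cref{BZ}: wildness directly places a periodic point of $B^{\mathbb Z}$ in $X_\sigma$ (its factors are factors of the bounded prefixes $\sigma^{l}(u)\cdots u$, so no extendability issue), and minimality then forces $X_\sigma$ to equal that single periodic orbit, contradicting the hypothesis; this is where ``not a single periodic orbit'' enters, not as an exclusion of the case $x \in B^{\mathbb Z}$. (b) Your $D$: passing to a power $\sigma^k$ proves a different statement — the theorem asserts a restriction of $\sigma$ itself. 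No power is needed: for $D = alph(X_\sigma)$ one has $\sigma(x) \in X_\sigma$ for every $x \in X_\sigma$, hence $\sigma(D) \subset D^+$ outright; and the equality $X_\sigma = X_{\sigma|_D}$ is best obtained from $\emptyset \neq X_{\sigma|_D} \subset X_\sigma$ plus minimality (nonemptiness because $D$ must contain a growing letter, using \Cref{uvw} and the non-periodicity hypothesis), rather than from your unproved assertion that every factor of $X_\sigma$ occurs in an image $\sigma|_D^n(a)$ of a single letter, which is the hard direction. (c) Uniqueness: the identity $D' = alph(X_{\sigma|_{D'}})$ you invoke is false in general — a tame, l-primitive restriction may contain bounded letters that never occur in its subshift (e.g.\ a fixed bounded letter appearing in no image of a growing letter) — so this step is a real gap and uniqueness requires its own argument about which bounded letters can be present in $D$.
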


As single periodic orbits are minimal and can be expressed as a substitution subshift, this result gives an equivalent class for minimal substitution subshifts.

\begin{corollary}[{\cite[Corollary B]{Shimomura}}]
	Let $\mathscr M$ be the class of all minimal substitution subshifts. Let $\mathscr M'$ be the class of all $X_\sigma$ such that $\sigma$ is a tame and l-primitive substitution. Then, it follows that $\mathscr M = \mathscr M'$.
\end{corollary}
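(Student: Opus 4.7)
The forward inclusion $\mathscr{M}' \subseteq \mathscr{M}$ is immediate from Theorem \ref{Shimomura}(i): any substitution subshift arising from a tame, l-primitive substitution is minimal, hence belongs to $\mathscr{M}$.

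For the reverse inclusion $\mathscr{M} \subseteq \mathscr{M}'$, the plan is to pick $X = X_\sigma \in \mathscr{M}$ and split on whether $X$ is a single periodic orbit. If $X$ is not, Theorem \ref{Shimomura}(ii) does all the work: it provides a subalphabet $D \subseteq A$ and a restriction $\sigma|_D : D \to D^+$ that is tame, l-primitive, and satisfies $X = X_{\sigma|_D}$, so $X \in \mathscr{M}'$. The only addition I would make is the quick observation that $\sigma|_D$ has at least one growing letter, without which $X_{\sigma|_D}$ would not qualify as a substitution subshift in the sense of the paper; and this is forced by non-periodicity, since if all letters of $D$ were bounded then $\mathscr{L}(\sigma|_D)$ would be finite and $X_{\sigma|_D}$ would be a finite union of periodic orbits, contradicting the assumption on $X$.

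The single periodic orbit case $X = X({}^\omega u^\omega)$ is where Theorem \ref{Shimomura}(ii) gives nothing and is the main obstacle, since we must explicitly exhibit a tame, l-primitive substitution $\sigma'$ with $X_{\sigma'} = X$. My plan is to take $u$ in its primitive root form, let $A'$ be the set of letters appearing in $u$, and define $\sigma' : A' \to {A'}^+$ by $\sigma'(a) := u$ for every $a \in A'$ when $\lvert u \rvert \geq 2$, and $\sigma'(a) := aa$ in the degenerate case $\lvert u \rvert = 1$. Every letter of $A'$ is then growing, so $\sigma'$ is tame by vacuity (the condition for being left- or right-isolated involves a factor in $B^+$, which is empty). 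Each iterate ${\sigma'}^n(a)$ is a power of $u$ (respectively $a^{2^n}$), so it contains every letter of $A'$, giving l-primitivity. Finally $\mathscr{L}(\sigma')$ equals the set of factors of powers of $u$, which for $u$ primitive coincides with $\mathscr{L}(X({}^\omega u^\omega))$, and the identification $X_{\sigma'} = X$ follows by the standard correspondence between shift-invariant closed sets and their languages.
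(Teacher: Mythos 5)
Your proposal is correct and follows essentially the same route as the paper: both directions rest on Theorem~\ref{Shimomura}, with the single-periodic-orbit case handled by noting that such an orbit is itself a tame, l-primitive substitution subshift. The only difference is that the paper leaves this last point as a one-line remark, whereas you spell out the explicit substitution ($\sigma'(a)=u$ for every letter of $u$, with the $aa$ fix when $\lvert u\rvert=1$), which is exactly the construction the remark has in mind.
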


\subsubsection{Minimal components for growing substitutions}

When $\sigma : A \rightarrow A^+$ is a growing substitution, Durand \cite{Durand} defines \textit{main sub-substitutions}. As one goal of this paper is to generalize them, we will describe them only briefly.

By raising the incidence matrix of $\sigma$ to a suitable power $p$, Durand introduces \textit{principal primitive components} of $A$, that are the disjoint alphabets $A_i \subset A$ such that $\sigma^p(A_i) \subset A_i^+$ and the restriction $\sigma^p|_{A_i}$ is primitive. The $\sigma^p|_{A_i}$ are called \textit{main sub-substitutions} and they generate the minimal components $X_{\sigma^p|_{A_i}}$.

\begin{example}
	Consider the growing substitution $\sigma : 0 \mapsto 11, 1 \mapsto 00$. Looking at $\sigma^2$, its main sub-substitutions are $\sigma^2|_{\{0\}}$ and $\sigma^2|_{\{1\}}$, so $X_{\sigma^2|_{\{0\}}} = \{{}^\omega 0^\omega\}$ and $X_{\sigma^2|_{\{1\}}} = \{{}^\omega 1^\omega\}$ are minimal components of $X_\sigma$. Note that, in this example, these are the only minimal components.
\end{example}

More generally, we have an upper bound of the number of minimal components that does not require to describe them.

\begin{proposition}[{\cite[Remark 5.7]{BKM}}]\label{growingBound}
	Let $\sigma : A \rightarrow A^+$ be a growing substitution. Then $X_\sigma$ has at most $\lvert A \rvert$ minimal components.
\end{proposition}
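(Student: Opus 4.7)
I attach to each minimal component $Y$ of $X_\sigma$ its alphabet $D_Y := \{a \in A \mid \exists\, y \in Y,\ a \sqsubset y\}$ and show that the family $(D_Y)$ consists of pairwise disjoint non-empty subsets of $A$; the bound $\lvert A \rvert$ then follows at once.

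First I would establish that $\sigma$ descends to a well-defined map on the finite (\Cref{quasiminimal}) set $\mathscr{M}$ of minimal components of $X_\sigma$: for $Y \in \mathscr{M}$ and any $y \in Y$ one sets $\sigma(Y) := X(\sigma(y))$, and independence of $y$ follows from minimality of $Y$ together with continuity of $\sigma$ in the prodiscrete topology; the subshift $\sigma(Y)$ is itself minimal by \Cref{minimal}(iii), since any factor of $\sigma(y)$ lies in some $\sigma(v)$ with $v \sqsubset y$, and the bounded-gap occurrences of $v$ in $y$ transfer to bounded-gap occurrences in $\sigma(y)$. Finiteness of $\mathscr{M}$ then lets me choose $p \geq 1$ so that $\sigma^p$ acts as the identity on $\mathscr{M}$ (as foreshadowed in the introduction). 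Consequently $\sigma^p(D_Y) \subseteq D_Y^+$, so the restriction $\sigma^p|_{D_Y}$ is a genuine substitution and $Y \subseteq X_{\sigma^p|_{D_Y}}$.

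Next I would check that $\sigma^p|_{D_Y}$ is primitive: every letter of $D_Y$ occurs with bounded gaps in $Y$ by minimality, and since $B = \emptyset$ (so l-primitivity coincides with primitivity), this forces, for any $b \in D_Y$, some $\sigma^{pn}(b)$ to contain every $a \in D_Y$. \Cref{Shimomura}(i) then makes $X_{\sigma^p|_{D_Y}}$ minimal, and $Y \subseteq X_{\sigma^p|_{D_Y}}$ with both minimal gives the equality $Y = X_{\sigma^p|_{D_Y}}$. If two distinct minimal components $Y, Y'$ shared a letter $c \in D_Y \cap D_{Y'}$, I would replace $p$ by a common multiple so that both $\sigma^p|_{D_Y}$ and $\sigma^p|_{D_{Y'}}$ are primitive; then for every $a \in D_Y$, primitivity yields $n$ with $a \sqsubset \sigma^{pn}(c) \in D_{Y'}^+$, so $a \in D_{Y'}$, and by symmetry $D_Y = D_{Y'}$. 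This would imply $Y = X_{\sigma^p|_{D_Y}} = X_{\sigma^p|_{D_{Y'}}} = Y'$, contradicting $Y \neq Y'$.

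The principal obstacle is the first step: defining precisely the action of $\sigma$ on subshifts of $X_\sigma$ and confirming that some iterate acts as the identity on $\mathscr{M}$. Once this is in place, the remainder is a routine combination of bounded-gap arguments, primitivity, and \Cref{Shimomura}(i), exploiting the growing hypothesis through the identification of l-primitivity with primitivity.
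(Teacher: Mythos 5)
Your overall strategy --- attach to each minimal component $Y$ its alphabet $D_Y$, show these alphabets are pairwise disjoint, and conclude --- is exactly the mechanism behind the paper's treatment (the paper only cites this proposition from [BKM], but re-derives and generalizes it via \Cref{main} and \Cref{bound}(i), using \Cref{compdisjoint}). The primitivity of $\sigma^p|_{D_Y}$ via bounded gaps, the identification $Y = X_{\sigma^p|_{D_Y}}$, and the disjointness argument are all sound, with one caveat: the inclusion $Y \subseteq X_{\sigma^p|_{D_Y}}$ does not follow from $\sigma^p(D_Y)\subseteq D_Y^+$ alone; it needs the same bounded-gaps-plus-growth argument you invoke for primitivity, or (simpler, and what the paper does in \Cref{Xperiodic} and \Cref{tame2}) the reverse inclusion $X_{\sigma^p|_{D_Y}} \subseteq Y$ followed by minimality of $Y$.

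The genuine gap is the step you yourself flag as the principal obstacle: ``finiteness of $\mathscr M$ lets me choose $p \geq 1$ so that $\sigma^p$ acts as the identity on $\mathscr M$.'' Finiteness is not enough: a self-map of a finite set has an iterate equal to the identity only if it is a bijection, and nothing in your sketch rules out that the induced map sends one minimal component onto another and never returns to it. For such a component no $p$ with $\sigma^p(Y)=Y$ would exist, and the invariance $\sigma^p(D_Y)\subseteq D_Y^+$, on which the whole second half rests, would be unjustified. The missing ingredient is desubstitution: every $x \in X_\sigma$ can be written $T^k(\sigma(y))$ with $y \in X_\sigma$ (\Cref{phiminus}); combined with quasi-minimality (\Cref{quasiminimal}), this is how the paper shows that $\tilde{\sigma}$ has a right inverse on subshifts and hence is a permutation of the (finitely many) subshifts of $X_\sigma$, so that every subshift, in particular every minimal component, satisfies $\tilde{\sigma}^k(X)=X$ for some $k$ (\Cref{Xperiodic}, \Cref{permutation}). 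Once that is supplied, your argument goes through and is essentially the specialization of \Cref{tame2} and \Cref{compdisjoint} to the growing case; without it, the proof is incomplete at its load-bearing step.
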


Let us draw a parallel with a result from Cortez and Solomyak \cite{CS} in the framework of admissible substitution tiling spaces on $\mathbb{R}^d$.

\begin{lemma}[{\cite[Lemma 2.9]{CS}}]\label{Xphik}
	Let $\omega$ be an admissible tile substitution. For all $k \geq 2$, $X_{\omega^k} = X_\omega$.
\end{lemma}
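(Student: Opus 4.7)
The plan is to prove the two inclusions $X_{\omega^k} \subseteq X_\omega$ and $X_\omega \subseteq X_{\omega^k}$ separately. The first is immediate: since $(\omega^k)^n = \omega^{kn}$ for every $n \geq 0$, every finite sub-pattern occurring in an iterate of $\omega^k$ already occurs in an iterate of $\omega$, so the languages satisfy $\mathscr L(\omega^k) \subseteq \mathscr L(\omega)$ and the inclusion of tile spaces follows directly from the definition.

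The nontrivial direction requires that every sub-pattern $P$ of an element of $X_\omega$ appear inside some $\omega^{kn}(t)$. Since $P \in \mathscr L(\omega)$, I would first locate $P$ inside $\omega^m(a)$ for some tile $a$ and some $m \geq 0$. The crucial monotonicity observation is that if I can exhibit a tile $c$ and an integer $r \geq 0$ with $r \equiv -m \pmod{k}$ and with $a$ appearing inside $\omega^r(c)$, then applying $\omega^m$ places $\omega^m(a)$, and therefore $P$, inside $\omega^{m+r}(c)$; since $m+r$ is a multiple of $k$, this is exactly what is needed.

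Producing such a pair $(c,r)$ is where the admissibility hypothesis enters. Because $a$ belongs to some element of $X_\omega$, arbitrarily large patterns containing $a$ lie in $\mathscr L(\omega)$, and such patterns can only appear inside iterates $\omega^r(c)$ whose sizes grow with $r$; this forces the set of admissible depths $r$ to be unbounded. The main obstacle is to upgrade ``unbounded'' to ``hitting every residue class modulo $k$''. For this I would use the composition property that if $a$ appears in $\omega^{r_1}(c_1)$ and $c_1$ appears in $\omega^{r_2}(c_2)$, then $a$ appears in $\omega^{r_1+r_2}(c_2)$. Iterating this composition on the finite tile set and applying a pigeonhole argument on residues modulo $k$, every residue class is eventually attained, completing the proof.
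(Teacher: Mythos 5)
Your overall reduction is correct and is essentially the standard one (the paper itself only cites this lemma from Cortez--Solomyak without proof): $X_{\omega^k}\subseteq X_\omega$ is immediate from $\mathscr L(\omega^k)\subseteq\mathscr L(\omega)$, and the converse does come down to the following claim: given $P\sqsubset\omega^m(a)$, find a prototile $c$ and $r\equiv -m \pmod{k}$ with $a\sqsubset\omega^r(c)$, so that $P\sqsubset\omega^{m+r}(c)$ with $k\mid m+r$. The gap is in how you establish that claim. From admissibility you extract only that the set of depths $r$ at which $a$ occurs in some $\omega^r(c)$ is unbounded, and you then assert that composing occurrences and ``pigeonholing on residues modulo $k$'' makes this set hit every residue class. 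Pigeonhole on residues produces two depths with the same residue, i.e.\ a difference divisible by $k$, not coverage of all classes; and in a composition chain $a\sqsubset\omega^{r_1}(c_1)$, $c_1\sqsubset\omega^{r_2}(c_2),\dots$ the individual $r_i$ are uncontrolled, so nothing in your argument prevents all attainable sums from lying in a proper subset of residues (if, say, every depth you can exhibit were a multiple of $k$, composition would never leave that class). The one step where admissibility must do real work is thus asserted rather than proved.

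The missing ingredient is the converse of your composition property: a deep occurrence decomposes into depth-one steps. If $a$ occurs in $\omega^n(c)$ with $n\geq 1$, then, since $\omega^n(c)=\omega(\omega^{n-1}(c))$, the tile $a$ lies in $\omega(b)$ for some prototile $b$; and admissibility guarantees such an $n\geq 1$ exists for every prototile $a$ (a patch of an element of $X_\omega$ containing $a$ and too large to be a single prototile must sit inside some $\omega^n(c)$ with $n\geq 1$). Hence every prototile occurs in the first-level image of some prototile, and chaining this backwards, $a\sqsubset\omega(b_1)$, $b_1\sqsubset\omega(b_2),\dots$, gives for \emph{every} $r\geq 0$ a prototile $c$ with $a\sqsubset\omega^r(c)$; in particular the residue $-m\bmod k$ is attained, with no pigeonhole needed. (Alternatively, decompose one long occurrence into a path of depth-one steps, find a repeated prototile, and pump and truncate that cycle.) This is exactly what fails in the non-admissible \Cref{ex1}: the letter $0$ occurs in no image $\sigma(c)$, the chain cannot be started, and indeed $X_{\sigma^2}\neq X_\sigma$ there. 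With this depth-one observation inserted, your argument closes; without it, the final step does not follow from what precedes it.
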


This allows to replace $\omega$ by a suitable power $\omega^p$ such that its incidence matrix provides alphabets $A_i$ and primitive substitutions $\omega^p|_{A_i}$ like Durand. What is remarkable in this framework is that this describes all the minimal components.

\begin{proposition}[{\cite[Lemma 2.10 (i)]{CS}}]\label{Cortez}
	Let $\omega$ be an admissible tile substitution that provides primitive substitutions $\omega|_{A_i}$. Then the minimal components of $X_\omega$ are the $X_{\omega|_{A_i}}$.
\end{proposition}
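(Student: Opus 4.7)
The plan is to prove both inclusions separately: that each $X_{\omega|_{A_i}}$ is a minimal component of $X_\omega$, and that every minimal component of $X_\omega$ is one of the $X_{\omega|_{A_i}}$.

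For the first inclusion, since $\omega|_{A_i}$ is primitive and, in the tile substitution setting, has no bounded letters, it is automatically tame and l-primitive, so \Cref{Shimomura}(i) yields the minimality of $X_{\omega|_{A_i}}$. The containment $X_{\omega|_{A_i}} \subset X_\omega$ follows from the language inclusion $\mathscr{L}(\omega|_{A_i}) \subset \mathscr{L}(\omega)$ together with admissibility.

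For the converse, let $Y \subset X_\omega$ be an arbitrary minimal component, and write $A_Y := \{a \in A \mid \exists y \in Y, a \sqsubset y\}$. The plan is to show $A_Y \subset A_i$ for some $i$; once this holds, every factor of a sequence in $Y$ lies in $\mathscr{L}(\omega) \cap A_i^*$, which coincides with $\mathscr{L}(\omega|_{A_i})$ thanks to admissibility, giving $Y \subset X_{\omega|_{A_i}}$, and the minimality of both $Y$ and $X_{\omega|_{A_i}}$ forces equality. To extract a single $A_i$ from $A_Y$, I would note that for each $i$ the set $Y \cap A_i^\mathbb{Z}$ is closed and shift-invariant in $Y$, so by minimality of $Y$ it equals $\emptyset$ or $Y$. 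The proof then reduces to excluding the case $Y \cap A_i^\mathbb{Z} = \emptyset$ for every $i$.

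This exclusion is the main obstacle. I would handle it via a density argument on the incidence matrix $M$ of $\omega$. After passing to a power $\omega^p$ such that each restriction $M|_{A_i}$ is positive, the Perron--Frobenius theorem implies that the asymptotic letter frequencies in $\omega^{np}(b)$, for any letter $b$, are supported on the single dominant primitive component $A_i$ reachable from $b$ in the substitution graph. In particular, transient letters (those lying outside $\bigcup_i A_i$) and bigrams straddling two distinct primitive components both appear with asymptotic density zero in long words of $\mathscr{L}(\omega)$. On the other hand, \Cref{minimal}(iii) guarantees that every factor of $\mathscr{L}(Y)$ occurs with bounded gaps in every element of $Y$, and hence with strictly positive lower density. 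These two facts are incompatible unless $A_Y \subset A_i$ for a unique $i$, which closes the argument.
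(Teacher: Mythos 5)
Two steps in your converse direction have genuine gaps. First, the identity $\mathscr L(\omega)\cap A_i^*=\mathscr L(\omega|_{A_i})$ is not a consequence of admissibility: a word over $A_i$ may occur in the language only by straddling the image of a letter outside $\bigcup_j A_j$. For instance, for $\sigma: 0\mapsto 01,\ 1\mapsto 10,\ t\mapsto 0t000$ (all letters growing, $t$ bi-infinitely extendable, hence admissible), the unique primitive component is $A_1=\{0,1\}$, yet $\sigma^{n}(0)^3\sqsubset\sigma^{n+1}(t)$ lies in $\mathscr L(\sigma)\cap A_1^*$ while it is not in the cube-free Thue--Morse language $\mathscr L(\sigma|_{A_1})$. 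So even once you know $A_Y\subset A_i$, you cannot conclude $Y\subset X_{\omega|_{A_i}}$ by this language identity. Second, and more seriously, the exclusion step via Perron--Frobenius densities does not close: zero asymptotic frequency of transient letters and straddling bigrams in the full words $\omega^{np}(b)$ is not in contradiction with positive lower density in elements of $Y$, because points of $Y$ arise as limits of \emph{factors} of $\omega^{np}(b)$, and a factor can concentrate letters whose global density is zero (compare $a\mapsto aab,\ b\mapsto bb$, where the density of the transient letter $a$ in $\sigma^n(a)$ decays only like $1/n$, so no uniform bound on factors follows). To make this step work one must argue about syndetic occurrences in long factors via desubstitution, not about frequencies in the full iterates.

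Note also that the statement is Cortez--Solomyak's lemma for tile substitutions on $\mathbb R^d$, which the paper only cites; \Cref{Shimomura} and \Cref{minimal} are stated for subshifts of $A^{\mathbb Z}$, so your argument is really aimed at the symbolic (growing) analog, i.e.\ at the growing case of \Cref{main}(i). For that statement the paper's own route is different and avoids both problems above: by \Cref{quasiminimal} the map $\tilde\sigma$ permutes the finitely many subshifts of $X_\sigma$, so $\tilde\sigma^k(Y)=Y$ for some $k$; this yields the \emph{reverse} inclusion $X_{\sigma^k|_{alph_C(Y)\cup B}}\subset Y$, equality by minimality of $Y$, and then primitivity (minimality of the alphabet) again from minimality of $Y$ (\Cref{Xperiodic}, \Cref{tame2}); in the tiling setting the analogous device is \Cref{Xphik}. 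Reorienting your argument around this inclusion, rather than $Y\subset X_{\omega|_{A_i}}$ plus a density count, is what is needed.
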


As the $A_i$ are pairwise disjoint, we get that any admissible tiling substitution system on a set of tiles $A$ has at most $\lvert A \rvert$ minimal components, as in \Cref{growingBound}.

\subsubsection{Minimal components for non-growing substitutions}

When a substitution is not growing, the production of bounded letters is closely related to tameness.

\begin{proposition}[{\cite[Theorem 2.9]{MR}}, {\cite[Proposition 3.17]{Shimomura}}]\label{Lbounded}
	Let $\sigma : A \rightarrow A^+$ be a substitution. Then the words in $\mathscr L(X_\sigma) \cap B^*$ have bounded length if and only $\sigma$ is tame.
\end{proposition}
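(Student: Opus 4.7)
The plan is to prove each direction separately, with the tame-implies-bounded direction being substantially more delicate.

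For the direction that wildness implies unbounded bounded blocks, I proceed by contraposition. Assume some $c \in C$ is left-isolated, so that $\sigma^n(c) = ucv$ with $u \in B^+$ and $v \in A^*$. Iterating $\sigma^n$ yields $\sigma^{kn}(c) = W_k \, c \, V_k$, where $W_k := \sigma^{(k-1)n}(u) \cdots \sigma^n(u) \, u \in B^+$ has length at least $k$ since each factor is non-empty. This produces arbitrarily long bounded-letter factors in $\mathscr L(\sigma)$. A standard extension argument (the growing letter $c$ appears in some bi-infinite element of $X_\sigma$, and these $W_k$ occur as factors of the corresponding iterated images) shows that these words lie in $\mathscr L(X_\sigma)$. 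The right-isolated case is symmetric.

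For the converse, assume $\sigma$ is tame. Introduce the ``first growing letter'' map $\phi : C \to C$ by writing $\sigma(c) = u_c \, \phi(c) \, w_c$ with $u_c \in B^*$, and let $\lambda_n(c) \in B^*$ denote the bounded prefix of $\sigma^n(c)$. Unrolling the recursion $\lambda_n(c) = \sigma^{n-1}(u_c) \, \lambda_{n-1}(\phi(c))$ gives
\begin{equation*}
	|\lambda_n(c)| \;\leq\; M_B \sum_{i=0}^{n-1} |u_{\phi^i(c)}|,
\end{equation*}
where $M_B := \max_{b \in B, \, k \geq 0} |\sigma^k(b)|$ is finite by definition of $B$. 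The orbit $(\phi^i(c))_i$ eventually enters a $\phi$-cycle $c^*_0, \ldots, c^*_{p-1}$, and on such a cycle $\sigma^p(c^*_0) = U \, c^*_0 \, V$ with $U$ an iterated concatenation of the $u_{c^*_i}$. If any $u_{c^*_i}$ were non-empty then $U \in B^+$, making $c^*_0$ left-isolated and contradicting tameness. Hence the cycle terms all vanish, only the pre-cycle terms contribute to the sum, and $|\lambda_n(c)| \leq K_\lambda$ uniformly. The symmetric argument for the ``last growing letter'' map yields $|\rho_n(c)| \leq K_\rho$ for the bounded suffix.

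The final step bounds interior bounded blocks by induction on $n$. Decompose $\sigma(c) = \lambda(c) \, c^{(1)} \, \mu_1(c) \cdots c^{(r)} \, \rho(c)$ with $c^{(j)} \in C$ and $\lambda(c), \mu_j(c), \rho(c) \in B^*$, so that $\sigma^n(c)$ is the concatenation of $\sigma^{n-1}$ applied to each piece. A bounded block $W$ in $\sigma^n(c)$ either lies entirely inside some $\sigma^{n-1}(c^{(j)})$ (handled by induction) or spans from the bounded suffix of some $\sigma^{n-1}(c^{(j)})$, through $\sigma^{n-1}(\mu_j(c))$, into the bounded prefix of $\sigma^{n-1}(c^{(j+1)})$, in which case $|W| \leq K_\rho + M_B \cdot |\mu_j(c)| + K_\lambda$ is bounded by a constant depending only on $\sigma$. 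This gives a uniform bound $K$ on bounded blocks in $\sigma^n(c)$ for all $c \in C$ and $n \geq 0$; combined with $|\sigma^n(b)| \leq M_B$ for $b \in B$, this shows $\mathscr L(\sigma) \cap B^*$ has bounded length, whence so does $\mathscr L(X_\sigma) \cap B^* \subseteq \mathscr L(\sigma) \cap B^*$.

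The main obstacle is the uniform prefix/suffix bound: extracting the cycle structure of $\phi$ and its symmetric analog, and then using tameness to force the cycle-level bounded prefixes to vanish, is the crux; once established, the interior analysis is a clean induction since each contribution is controlled by the constants $K_\lambda$, $K_\rho$, $M_B$ and the fixed interior lengths $|\mu_j(c)|$.
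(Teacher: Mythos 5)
Your tame-implies-bounded direction is sound, and it is in essence a condensed version of the machinery this paper develops later for other purposes: your map $\phi$ and the recursion $\lambda_n(c)=\sigma^{n-1}(u_c)\,\lambda_{n-1}(\phi(c))$ are the graph $G_L$ of \Cref{GL} together with \Cref{LBRBphi,LBphik}; the observation that tameness forces $u_{c^*_i}=\varepsilon$ on $\phi$-cycles is \Cref{isocycle}; and your interior induction is the 1-block analysis behind \Cref{LXB}. (The paper itself does not reprove \Cref{Lbounded}: it cites Maloney--Rust and Shimomura and later notes it can be recovered from \Cref{LXB}.) Two small points there: the spanning case of your induction should also explicitly include blocks meeting the bounded prefix or suffix of $\sigma^n(c)$ itself, but these are already covered by $K_\lambda$ and $K_\rho$, so this is cosmetic; and the uniform bound should be set up as $K_n\leq\max(K_{n-1},\,K_\rho+M_B\max_j|\mu_j(c)|+K_\lambda,\,K_\lambda,\,K_\rho)$ over all $c\in C$, which is what you intend.

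The genuine gap is in the wild direction. Your ``standard extension argument'' rests on the premise that the left-isolated letter $c$ occurs in some bi-infinite element of $X_\sigma$, i.e.\ $c\in\mathscr L(X_\sigma)$; this is false in general, because $\mathscr L(X_\sigma)$ can be strictly smaller than $\mathscr L(\sigma)$. Concretely, for $\sigma:0\mapsto 10,\ 1\mapsto 1$ the letter $0$ is left-isolated, yet $X_\sigma=\{{}^\omega 1^\omega\}$ and $0\notin\mathscr L(X_\sigma)$, so there is no point of $X_\sigma$ through $c$ whose iterated images you can invoke. The conclusion is still true, but the argument must go through the periodic word rather than through $c$: since $u\in B^+$, the sequence $\left(\sigma^{jn}(u)\right)_{j\geq 0}$ is eventually periodic, so for large $k$ the words $W_k=\sigma^{(k-1)n}(u)\cdots\sigma^n(u)\,u$ contain arbitrarily high powers of a fixed word $w\in B^+$; every factor of ${}^\omega w^\omega$ is then a factor of some $W_k\sqsubset\sigma^{kn}(c)$, hence lies in $\mathscr L(\sigma)$, and therefore ${}^\omega w^\omega\in X_\sigma$ by the very definition of $X_\sigma$ (membership only requires all factors to lie in $\mathscr L(\sigma)$; it does not require $c$ itself to survive into $X_\sigma$). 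This is exactly the content and proof of \Cref{BZ}, so the fix is short, but as written your justification would fail.
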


\begin{remark}
	A similar result was proven earlier for the language of D0L-systems in \cite[proof of Lemma 2.1]{ER} with a different vocabulary: tameness is called the \textit{edge condition}, and the fact that $\mathscr L(G) \cap B^*$ is infinite is called \textit{pushyness}.
\end{remark}

\Cref{Lbounded} relies in part on the following lemma.

\begin{lemma}[{\cite[Lemma 2.8]{MR}},{\cite[Lemma 3.8]{Shimomura}}]\label{BZ}
	Let $\sigma : A \rightarrow A^+$ be a substitution. If $\sigma$ is wild, then $X_\sigma$ contains a periodic word in $B^\mathbb{Z}$.
\end{lemma}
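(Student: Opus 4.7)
The plan is to construct an explicit periodic element of $B^\mathbb{Z}$ directly from the isolation hypothesis. By the left/right symmetry, I can assume $C_{liso}\neq\emptyset$ and pick $c\in C_{liso}$ together with $n\geq 1$, $u\in B^+$, $v\in A^*$ such that $\sigma^n(c)=ucv$. I would then introduce the shorthand $u_k:=\sigma^{kn}(u)\in B^+$ (using that $\sigma(B^+)\subset B^+$) and observe, by a straightforward induction on $k$, the formula
\[
  \sigma^{(k+1)n}(c) \;=\; u_k u_{k-1}\cdots u_1 u_0 \, c \, v\,\sigma^n(v)\cdots\sigma^{kn}(v),
\]
so that every factor of each concatenation $u_k u_{k-1}\cdots u_0$ belongs to $\mathscr L(\sigma)$.

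The next step is to extract periodicity from boundedness. Since for every $b\in B$ the sequence $(\lvert\sigma^j(b)\rvert)_{j\geq 0}$ is bounded and $B$ is finite, $\lvert u_k\rvert \leq \lvert u\rvert\cdot\max_{b\in B,\,j\geq 0}\lvert\sigma^j(b)\rvert<\infty$. Hence $(u_k)_{k\geq 0}$ takes values in a finite subset of $B^+$ and is therefore eventually periodic: there exist $N\geq 0$ and $p\geq 1$ with $u_{k+p}=u_k$ for all $k\geq N$. Setting $W:=u_{N+p-1}u_{N+p-2}\cdots u_N\in B^+$, I would then verify by induction that $W^m=u_{N+mp-1}u_{N+mp-2}\cdots u_N$ for every $m\geq 1$.

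The final step is to show that ${}^\omega W^\omega\in X_\sigma$, which already produces the desired periodic word in $B^\mathbb{Z}$. Any finite factor $y\sqsubset{}^\omega W^\omega$ is a factor of some $W^m$; setting $k:=N+mp-1$, the identity above gives $W^m\sqsubset u_k u_{k-1}\cdots u_0\sqsubset\sigma^{(k+1)n}(c)$, so $y\in\mathscr L(\sigma)$. The definition of $X_\sigma$ then yields ${}^\omega W^\omega\in X_\sigma$, and the case $C_{riso}\neq\emptyset$ is handled symmetrically by reading iterates from the right.

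The main obstacle is really the finiteness argument yielding eventual periodicity of $(u_k)$: this is the only place where the distinction between bounded and growing letters is genuinely used, as it converts an a priori arbitrary sequence of concatenations into one taking values in a finite set. Everything else is bookkeeping, with the one subtle point that one must pass to the pure period $W$ (discarding the pre-period $u_{N-1}\cdots u_0$) in order to obtain a two-sided periodic word rather than merely an eventually periodic left tail.
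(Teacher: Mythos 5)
Your proposal is correct and follows essentially the same route as the paper's proof: pick an isolated letter $c$ with $\sigma^n(c)=ucv$, iterate $\sigma^n$ to prepend the blocks $\sigma^{kn}(u)\in B^+$, use boundedness to get eventual periodicity of this sequence, and conclude that the resulting periodic word over $B$ has all its factors in $\mathscr L(\sigma)$, hence lies in $X_\sigma$. Your write-up is in fact a more careful version of the paper's sketch (explicit induction formula, explicit extraction of the pure period $W$), with no genuine gap.
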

\begin{proof}
	Let $c \in C_{liso}$ (resp. $c \in C_{riso}$), so that there exist $n \geq 1$, $u \in B^+$ and $v \in A^*$ such that $\sigma^n(c) = ucv$ (resp. $\sigma^n(c) = vcu$). Then, for all $l \geq 1$, there exists $v_l \in A^*$ such that $\sigma^{nl}(c) = \sigma^{l-1}(u)...\sigma(u) ucv_l$ (resp. $\sigma^{nl}(c) = v_lcu \sigma(u)...\sigma^{l-1}(u)$). As $u \in B^+$, the sequence $\left(\sigma^l(u)\right)_{l \geq 0}$ is eventually periodic so, as $l$ grows, the words $\sigma^{nl}(c)$ produce an infinite periodic word in $X_\sigma \cap B^\mathbb{Z}$.
\end{proof}

\begin{remark}\label{dependance}
	The proof of \Cref{BZ} is in fact more precise than its statement: it shows that every letter of $C_{liso}$ (resp. of $C_{riso}$) generates a periodic word $x \in X_\sigma \cap B^\mathbb{Z}$, therefore it generates the minimal component $X(x) \subset X_\sigma \cap B^\mathbb{Z}$.
\end{remark}

\begin{example}\label{ex2}
	Consider the substitution $\sigma : 0 \mapsto 101, 1 \mapsto 1$, for which $C = \{0\}$ and $B = \{1\}$. We have $0 \in C_{liso} \cap C_{riso}$ and ${}^\omega 1^\omega \in X_\sigma \cap B^\mathbb{Z}$ so $\{{}^\omega 1^\omega\} \subset X_\sigma \cap B^\mathbb{Z}$ is a minimal component. Note that $X_\sigma = X({}^\omega 101^\omega)$ so $\{{}^\omega 1^\omega\}$ is its unique minimal component, thus $X_\sigma$ is essentially minimal but not minimal.
\end{example}

More generally, every word in $X_\sigma \cap B^\mathbb{Z}$ has a periodic structure.

\begin{proposition}[{\cite[Proposition 4.3]{BPR2}}]\label{uvw}
	Let $\sigma : A \rightarrow A^+$ be a substitution. If $x \in X_\sigma \cap B^\mathbb{Z}$, then there exists $u, v, w \in B^+$ and $k \in \mathbb{Z}$ such that $x = T^k({}^\omega u  w v^\omega)$, where the lengths of $u, v, w$ are bounded by a computable integer depending only on $\sigma$.
\end{proposition}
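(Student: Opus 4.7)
The plan is to first handle a vacuous case: if $\sigma$ is tame, then \Cref{Lbounded} implies that $\mathscr L(X_\sigma) \cap B^*$ has bounded length, forcing $X_\sigma \cap B^\mathbb{Z} = \emptyset$, and the conclusion holds vacuously. I therefore assume $\sigma$ is wild and fix $x \in X_\sigma \cap B^\mathbb{Z}$.

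Since every letter of $B$ is bounded, $\mathscr L(\sigma|_B) \subset B^*$ is finite; hence the arbitrarily long bounded factors $x_{[-n,n]}$ of $x$ cannot come from purely bounded iterations and, for each large $n$, must embed in a maximal $B^*$-segment $\omega_n$ of $\sigma^{m_n}(a_n)$ between two consecutive growing letters (or at a boundary), for some $a_n \in C$, $m_n \geq 1$. I would then analyse $\omega_n$ via the data $(u(c), v(c))_{c \in C}$, where $u(c)$ and $v(c)$ are the maximal $B^*$-prefix and $B^*$-suffix of $\sigma(c)$, and the maps $\lambda, \rho : C \to C$ sending $c$ to the leftmost (resp.\ rightmost) growing letter of $\sigma(c)$. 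Tracing backwards through $\sigma$ until $\omega_n$ is obtained by iterating $\sigma$ some $k \geq 0$ times from a base segment $\omega^* \in B^*$ lying internally within a single $\sigma$-image (so with $|\omega^*|$ bounded by $\max_{c \in C}|\sigma(c)|$), an easy induction yields
\begin{equation*}
	\omega_n = v(\rho^{k-1}(c^*_1)) \, \sigma(v(\rho^{k-2}(c^*_1))) \cdots \sigma^{k-1}(v(c^*_1)) \cdot \sigma^k(\omega^*) \cdot \sigma^{k-1}(u(c^*_2)) \cdots u(\lambda^{k-1}(c^*_2))
\end{equation*}
for the growing letters $c_1^*, c_2^*$ bounding $\omega^*$.

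Combining the finiteness of $C$ -- which forces the orbits of $\rho, \lambda$ to be eventually periodic -- with the eventual periodicity of $(\sigma^k(b))_k$ for $b \in B$ (itself following from the finiteness of $\mathscr L(\sigma|_B)$), the left half of $\omega_n$ converges as $k \to \infty$ to some ${}^\omega u$ with $u \in B^+$ exactly when $c_1^*$'s $\rho$-orbit reaches a right-isolated letter; symmetrically the right half converges to $v^\omega$ when $c_2^*$'s $\lambda$-orbit reaches a left-isolated letter. The central $\sigma^k(\omega^*)$ stays in $B^*$ with uniformly bounded length. Each such limit is a bi-infinite word ${}^\omega u w v^\omega$ whose language contains $x_{[-n,n]}$, with $|u|, |v|, |w|$ explicitly bounded in terms of $|A|$, $\max_{c \in C}|\sigma(c)|$, the cycle lengths of $\rho, \lambda$, and the $\sigma|_B$-period. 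A pigeonhole argument on the finitely many possible decomposition types, followed by compactness in $A^\mathbb{Z}$, then yields $x = T^k({}^\omega u w v^\omega)$ for some $k \in \mathbb{Z}$.

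The main obstacle is the tracing-back step: verifying that every $\omega_n$ genuinely arises by iterating $\sigma^k$ from a bounded base segment $\omega^*$ with the described structure, rather than in some more convoluted way. This is ensured by taking the minimal $m^* \leq m_n$ such that $\omega_n$ descends from a segment of $\sigma^{m^*}(a_n)$; at this minimal step, $\omega^*$ is necessarily internal to a single $\sigma$-image, and the bound $|\omega^*| \leq \max_{c \in C}|\sigma(c)|$ follows immediately, which in turn controls the length of the transition word $w$ uniformly in $n$.
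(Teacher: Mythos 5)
Your outline is essentially sound, but note that the paper never proves this proposition itself: it is imported from \cite[Proposition 4.3]{BPR2} and used as a black box. What you have written amounts to a re-derivation of it with exactly the machinery the paper builds later, in \Cref{wcomp}, to prove the finer \Cref{LXB}: your base segments $\omega^*$ internal to a single $\sigma$-image are the paper's origins of 1-blocks (\Cref{origin}), your maps $\lambda,\rho$ and words $u(c),v(c)$ are the graphs $G_L,G_R$ of \Cref{GL,GR} together with $LB(\sigma(c)),RB(\sigma(c))$, your displayed expansion of $\omega_n$ is \Cref{El} combined with \Cref{LBphik} and its right-hand analogue, and the eventual periodicity you invoke is packaged there as the words $LP(c)$ and $RP(c)$ via \Cref{LELPLQ,RQRPRE}. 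Two spots in your sketch are thinner than a complete write-up needs, though neither is fatal: first, the boundary segments (the maximal bounded prefix $LB\left(\sigma^{m_n}(a_n)\right)$ and suffix $RB\left(\sigma^{m_n}(a_n)\right)$) are only mentioned parenthetically, and your minimality argument for the base segment literally applies only to internal blocks, whereas these one-sided cases must be treated separately as in \Cref{maxfac}; second, the final step, from ``every window $x_{[-n,n]}$ embeds in a word $u_1\,RP(a)^p\,u_2\,LP(b)^q\,u_3$ with bounded junk'' to ``$x = T^k({}^\omega u w v^\omega)$'', needs the explicit dichotomy on where the bounded middle sits relative to the window (positions staying bounded along a subsequence versus drifting to $\pm\infty$, the latter forcing $x$ periodic), which is the same kind of argument as in the proof of \Cref{ntame2}. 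With those two points written out, and the tame case dispatched by \Cref{Lbounded} as you do, your proof is complete, and the length bounds are indeed computable from the cycle lengths of $G_L,G_R$, the eventual period of $\sigma$ on $B^*$, and $\max_{c\in C}\lvert\sigma(c)\rvert$.
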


We deduce a description of the minimal components of $X_\sigma$ in $B^\mathbb{Z}$.

\begin{corollary}\label{XBZ}
	Let $\sigma : A \rightarrow A^+$ be a substitution and let $X \subset B^\mathbb{Z}$ be a minimal component of $X_\sigma$, then there exists a word $u \in B^+$ such that $X = X({}^\omega u^\omega)$. In other words, every minimal component of $X_\sigma \cap B^\mathbb{Z}$ is a single periodic orbit.
\end{corollary}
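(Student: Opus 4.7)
The plan is to use \Cref{uvw} as the engine and exploit the minimality of $X$ to force the two "ends" of the structure $T^k({}^\omega u w v^\omega)$ to be the whole minimal component.

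First I would pick any element $x \in X$ (non-empty because it is a subshift). Applying \Cref{uvw}, I obtain words $u, v, w \in B^+$ and $k \in \mathbb{Z}$ with $x = T^k({}^\omega u w v^\omega)$. Since $X$ is minimal, by \Cref{minimal} we have $X(x) = X$.

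The key step is to produce the purely periodic point ${}^\omega v^\omega$ inside $X$ (symmetrically one could use ${}^\omega u^\omega$). For this I would consider the sequence $T^{n |v|}(x)$ for $n \to \infty$: up to the finite shift by $k$, this sequence moves the "origin" past the finite block $w$, so for every window of size $M$ around position $0$, the sequence eventually agrees with the corresponding window in ${}^\omega v^\omega$. Hence $T^{n|v|}(x) \to {}^\omega v^\omega$ in the prodiscrete topology, and since $X(x)$ is closed and contains the orbit of $x$, we get ${}^\omega v^\omega \in X(x) = X$.

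Now $X({}^\omega v^\omega)$ is a finite (hence closed) $T$-invariant set, so it is a subshift, and it is contained in $X$. By minimality of $X$ we conclude $X = X({}^\omega v^\omega)$, with the choice $u := v \in B^+$ proving the statement. The only subtle point is ensuring that the orbit closure genuinely contains ${}^\omega v^\omega$, and this is immediate once one writes out the factor of length $M$ of $T^{n|v|}(x)$ for $n$ large.
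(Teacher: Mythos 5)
Your proof is correct and follows essentially the same route as the paper: apply \Cref{uvw} to a point of $X$, observe that a periodic tail point (the paper uses ${}^\omega u^\omega$, you use ${}^\omega v^\omega$) lies in $X$ because $X$ is closed and shift-invariant, and conclude by minimality. You merely spell out the limit-of-shifts argument that the paper compresses into ``by closeness of subshifts.''
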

\begin{proof}
	Let $x \in X$. Then, by \Cref{uvw}, there exist $u,v,w \in B^+$ and $k \in \mathbb{Z}$ such that $x = T^k({}^\omega u  w v^\omega)$. By closeness of subshifts, ${}^\omega u^\omega \in X$ so, as $X$ is minimal, $X = X({}^\omega u^\omega)$.
\end{proof}

\subsection{Results}

In order to state our results, we distinguish two types of minimal components.

\begin{definition}
	Let $\sigma : A \rightarrow A^+$ be a substitution and let $X$ be a minimal component of $X_\sigma$. If $X \subset B^\mathbb{Z}$, we say that $X$ is \textit{wild}, otherwise we say that $X$ is \textit{tame}.
\end{definition}

This definition is inspired by the two types of minimal components previously described. On the one hand, main sub-substitutions provide tame minimal components in the growing case, so our goal is to generalize them to the general case and to show that they are precisely the minimal components. On the other hand, \Cref{XBZ} states that all the wild minimal components are single periodic orbits, so our goal is to give an explicit characterization that is suited for describing the dynamics, computing, and counting the minimal components. This is the main result of this paper:

\begin{theorem}\label{main}
	Let $\sigma : A \rightarrow A^+$ be a substitution.
	
	(i) The tame minimal components of $X_\sigma$ are the $X_\tau$ where $\tau$ is a main sub-substitution of $\sigma$, i.e., a computable substitution of the form $\sigma^k|_{D \cup B}$ where $D $ is a suitable subalphabet of $C$ and $k$ is an integer that characterizes $D$.
	
	(ii) The wild minimal components of $X_\sigma$ are the $X({}^\omega LP(c)^\omega)$ where $c \in C_{liso}$ and $LP(c)$ is a computable word in $B^+$, and the $X({}^\omega RP(c)^\omega)$ where $c \in C_{riso}$ and $RP(c)$ is a computable word in $B^+$.
\end{theorem}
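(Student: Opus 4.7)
The plan is to treat the two parts of the theorem separately, exploiting the dichotomy between minimal components containing a growing letter and those lying entirely in $B^\mathbb{Z}$. Both halves rely on the quasi-minimality of $X_\sigma$ (\Cref{quasiminimal}), which produces only finitely many minimal components, together with the observation that $\sigma$ induces a permutation on them: for any minimal subshift $Y \subset X_\sigma$, the orbit closure of $\sigma(Y)$ is again a minimal subshift of $X_\sigma$, and finiteness forces some power $\sigma^k$ to fix each component. This reduction to a $\sigma^k$-invariant minimal component is what will let us pass to restrictions of $\sigma^k$.

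For (i), I would first build, for each subalphabet $D \subset C$, an oriented graph encoding how letters of $D$ propagate into $D \cup B$ under iteration of $\sigma$. The "suitable" subalphabets are those maximal $D$'s for which a power $\sigma^k|_{D \cup B}$ is well-defined and, once read on $D$, tame and l-primitive; these are detected effectively as strongly-connected-component data of the graph, so the associated main sub-substitutions $\tau = \sigma^k|_{D \cup B}$ are computable. By \Cref{Shimomura}(i), each $X_\tau$ is then a tame minimal component of $X_\sigma$. Conversely, given a tame minimal component $X$, set $D := \mathrm{alph}_C(X)$; the permutation argument yields a $k$ with $\sigma^k(X) = X$, which forces $\sigma^k(D) \subset (D \cup B)^+$ and $X \subset X_{\sigma^k|_{D\cup B}}$. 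Applying \Cref{Shimomura}(ii) to $\sigma^k|_{D \cup B}$ recovers a tame and l-primitive restriction that must coincide with one of the main sub-substitutions produced by the graph construction, and the equality $X = X_\tau$ follows from minimality.

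For (ii), \Cref{XBZ} already guarantees every wild minimal component is a single periodic orbit $X({}^\omega u^\omega)$ with $u \in B^+$. Using the refined form of \Cref{BZ} noted in \Cref{dependance}, I would define $LP(c)$ (resp.\ $RP(c)$) for $c \in C_{liso}$ (resp.\ $c \in C_{riso}$) as the primitive root of the bi-infinite word obtained by letting $l \to \infty$ in the construction $\sigma^{nl}(c) = \sigma^{l-1}(u)\cdots\sigma(u)ucv_l$; computability comes from the eventually periodic behaviour of $\sigma$ on $B^+$, whose pre-period and period can be extracted from the action of $\sigma|_B$. The converse inclusion is the substantive half: given a wild minimal component $X({}^\omega u^\omega)$, I would apply a 1-blocks decomposition in the spirit of \cite{Devyatov} to write an iterate $\sigma^m(a)$ as an alternating sequence of maximal bounded blocks and growing letters, then use \Cref{uvw} and the pigeonhole principle on $\mathscr L(X_\sigma) \cap B^*$ (which is infinite by \Cref{Lbounded} since $\sigma$ must be wild) to locate inside some $\sigma^m(a)$ a bounded block arbitrarily long to the left or to the right of a fixed growing letter $c$. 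This forces $c \in C_{liso} \cup C_{riso}$ and identifies $u$ with a cyclic shift of $LP(c)$ or $RP(c)$, giving $X({}^\omega u^\omega) = X({}^\omega LP(c)^\omega)$ or $X({}^\omega RP(c)^\omega)$.

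I expect the main obstacle to be the exhaustiveness part of (i), namely ruling out an "exotic" tame minimal component not captured by any of the computed main sub-substitutions. The difficulty is entirely in showing that the alphabet $D := \mathrm{alph}_C(X)$ extracted from such a component is in fact one of the vertices produced by the graph construction; this amounts to checking, via the $\sigma^k$-invariance of $X$ and \Cref{Shimomura}, that $\sigma^k$ acts l-primitively on $D$ and tamely on $D \cup B$, which is where the careful definition of the graph (and of "suitable") has to do its work. The symmetric difficulty in (ii) — pinpointing which $c$ is responsible for a given periodic orbit — is less delicate but still requires the bookkeeping provided by the 1-blocks decomposition.
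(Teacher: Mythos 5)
Your overall architecture matches the paper's (Shimomura's theorem for sufficiency, the permutation induced by $\sigma$ on subshifts via quasi-minimality, and a Devyatov-style 1-block decomposition for the wild components), but the exhaustiveness part of (i) — which you yourself flag as the main obstacle — is left with a genuine gap, and the step you do write down would not close it. From $\tilde{\sigma}^k(X)=X$ you claim $X \subset X_{\sigma^k|_{D \cup B}}$ and then apply \Cref{Shimomura}(ii) to $\sigma^k|_{D\cup B}$. Two problems. First, the inclusion that actually follows easily is the \emph{reverse} one: every factor of $\sigma^{km}(d)$ with $d \in D=alph_C(X)$ lies in $\mathscr L(X)$ because $d$ occurs in $X$ and $\tilde{\sigma}^{km}(X)=X$, giving $X_{\sigma^k|_{D\cup B}} \subset X$ and hence equality by minimality of $X$; your inclusion would require every factor of $X$ to sit inside $\sigma^{km}(a)$ for a \emph{single} letter $a \in D\cup B$, which is unjustified at this stage, and in any case minimality of $X$ together with $X \subset X_{\sigma^k|_{D\cup B}}$ yields no equality. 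Second, \Cref{Shimomura}(ii) only applies to a substitution whose subshift is already known to be minimal, and the minimality of $X_{\sigma^k|_{D\cup B}}$ is precisely what is at stake, so the appeal is circular. What is missing is the content of \Cref{tame2}: after obtaining $X = X_{\sigma^k|_{D\cup B}}$, one shows (a) tameness, i.e. $D \subset C_{niso}$, because growing letters occur with bounded gaps in the minimal component $X$, so words of $\mathscr L(X)\cap B^*$ have bounded length and \Cref{Lbounded} applies; and (b) l-primitivity, because $\tilde{\sigma}^k(X)=X$ gives $\sigma^{kl}(d)\in\mathscr L(X)$ and minimality forces $\sigma^{kl}(d)$ to contain every letter of $D$ for large $l$, whence $D$ is a minimal alphabet by \Cref{minimalprimitive}. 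Without (a) and (b) the ``exotic component'' scenario is not excluded.

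Part (ii) is essentially the paper's route in sketch form (your $LP(c)$, $RP(c)$ as the eventual periods of the bounded words generated by isolated letters, then a 1-block decomposition to show nothing else can occur), but one assertion needs care: an arbitrarily long bounded block immediately to the left of a fixed growing letter $c$ does \emph{not} force $c \in C_{liso}$ — the growth may be produced from the other end of the 1-block by a right-isolated letter, as in $\sigma : a \mapsto a1,\ c \mapsto c,\ 1 \mapsto 1$, where $c$ sees blocks $1^l$ on its left although $c \notin C_{liso}$ and the relevant period is $RP(a)$. Attributing the period to the correct isolated letter is exactly the bookkeeping done in \Cref{LXB} (every maximal bounded factor has the form $u_1\,RP(a)^p\,u_2\,LP(b)^q\,u_3$ with $(u_1,u_2,u_3)$ ranging over a finite set), and your argument needs that level of precision, together with the finiteness of the leftover pieces, to run the length-contradiction that rules out any other wild component.
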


\begin{remark}\label{avoid}
	In the growing case, Durand's definition of main sub-substitutions and \Cref{Cortez} rely on raising the substitution to a suitable power. This has the benefit of simplifying the cyclic behaviour of subalphabets, but we do not do this here for several reasons:
	
	(i) \Cref{Xphik} also holds for admissible substitutions on $A^\mathbb{Z}$ but not for non-admissible ones: in \Cref{ex1}, we have $X_\sigma = X({}^\omega 12^\omega) \cup X({}^\omega 21^\omega)$ and $X_{\sigma^2} = X({}^\omega 21^\omega)$.
	
	(ii) It would hide the dynamical aspects we focus on in \Cref{dycomp,dcomp}.
	
	(iii) It would make computation significantly longer when we want our result to be as efficient as possible.
\end{remark}

In \Cref{tcomp} we prove \Cref{main} (i). We first introduce minimal alphabets (\Cref{minalph}), not with matrices but with an oriented graph, and we show that the restrictions to these alphabets provide tame minimal components which generalize main sub-substitutions (\Cref{mainsub}). To prove that all tame components have this form, we use the fact that the substitution $\sigma$ induces a permutation $\tilde{\sigma}$ on the subshifts of $X_\sigma$.

In \Cref{wcomp} we prove \Cref{main} (ii) by constructing the computable words $LP(c)$ and $RP(c)$ (\Cref{LP,RP}) that depend only on $\sigma$ and the letter $c$. By construction, ${}^\omega LP(c)^\omega$ and ${}^\omega RP(c)^\omega$ are the periodic words exhibited in \Cref{dependance}, and we take inspiration from a result in D0L-systems to show that they are precisely the periodic words in \Cref{XBZ}.

In \Cref{dcomp} we show that $\tilde{\sigma}$ induces a permutation on the tame (resp. wild) minimal components of $X_\sigma$, and that its action is described by the directed graphs we built to prove our theorem.

\begin{example}\label{ex0}
	The different constructions and results throughout \Cref*{tcomp,wcomp,dcomp} will be illustrated with the substitution
	\begin{equation*}
		\sigma : 0 \mapsto 141, 1 \mapsto 00, 2 \mapsto 242, 3 \mapsto 5435, 4 \mapsto 5, 5 \mapsto 6, 6 \mapsto 5,
	\end{equation*}
	for which $C = \{0,1,2,3\}$, $B = \{4,5,6\}$ and $C_{liso} = C_{riso} = \{3\}$.
\end{example}

\begin{remark}\label{reduced}
	In practice, we display minimal components in their reduced form: for every tame component $X=X_{\sigma^k|_{D \cup B}}$, there is a unique alphabet $E$ such that $D \subset E \subset D \cup B$, $\sigma^k|_E$ is defined and admissible and $X=X_{\sigma^k|_E}$; for every wild component $X=X({}^\omega u^\omega)$, there is a primitive word $v$ such that $X=X({}^\omega v^\omega)$.
\end{remark}

In the last two sections we emphasize the effectiveness of our characterization. In \Cref{numcomp} we begin by showing a known computability result for which we could not find a proper proof:
\newpage
\begin{proposition}\label{compB}
	Let $\sigma : A \rightarrow A^+$ be a substitution. Then $B$ and $C$ are computable.
\end{proposition}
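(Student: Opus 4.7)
Since $\sigma$ is non-erasing, for every $a \in A$ and every $n \geq 0$ one has
$|\sigma^{n+1}(a)| = \sum_{b \sqsubset \sigma^n(a)} |\sigma(b)| \geq |\sigma^n(a)|$,
so the sequence $(|\sigma^n(a)|)_{n \geq 0}$ is non-decreasing. Hence $a \in B$ if and only if this sequence is eventually constant, and my plan is to make this property effective by tracking the sequence of alphabets $A_n := \{b \in A \mid b \sqsubset \sigma^n(a)\}$.

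A direct computation gives $A_{n+1} = \bigcup_{b \in A_n} \{c \in A \mid c \sqsubset \sigma(b)\}$, so $(A_n)_{n \geq 0}$ is the orbit of $\{a\}$ under a deterministic map $F : \mathcal{P}(A) \to \mathcal{P}(A)$. Since $|\mathcal{P}(A)| = 2^{|A|}$, this orbit enters its periodic cycle by step $2^{|A|}$. Let $A_a^\infty$ denote the union of the alphabets lying on this cycle: it is effectively computable by iterating $F$ on $\{a\}$ until a repetition is detected.

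The key claim is that $a \in B$ if and only if every letter $b \in A_a^\infty$ satisfies $|\sigma(b)| = 1$. For the easy direction, if every such $b$ has $|\sigma(b)| = 1$ then for all $n \geq 2^{|A|}$ every letter appearing in $\sigma^n(a)$ has image of length $1$, hence $|\sigma^{n+1}(a)| = |\sigma^n(a)|$, the length stabilises and $a$ is bounded. For the converse, if $a \in B$ then $|\sigma^n(a)|$ is eventually constant, forcing $|\sigma(b)| = 1$ for every letter $b$ appearing in $\sigma^n(a)$ at all sufficiently large $n$; applying this across one full period of $(A_n)$ gives the condition for every $b \in A_a^\infty$.

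Consequently $B = \{a \in A : \forall b \in A_a^\infty,\ |\sigma(b)| = 1\}$ is computable, and so is $C = A \setminus B$. The main subtlety is in the converse direction: a priori $|\sigma^n(a)|$ might remain constant on a plateau and strictly increase later (as in $\sigma : a \mapsto b,\ b \mapsto cc,\ c \mapsto c$), but this is ruled out once the alphabet sequence has entered its periodic cycle, since any $b \in A_a^\infty$ with $|\sigma(b)| \geq 2$ would force a strict length increase at each of its revisits, contradicting the eventual constancy of $|\sigma^n(a)|$.
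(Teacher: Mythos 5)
Your proof is correct, and it takes a genuinely different route from the paper's. The paper introduces the set $P$ of \emph{periodic} letters (those $a$ with $\sigma^n(a)=a$ for some $n\geq 1$), proves by a pigeonhole argument on chains of pre-periodic letters that $a\in B$ if and only if $\sigma^{\lvert A\rvert}(a)\in P^+$, and computes $B$ by first determining $P$ (iterating $\sigma$ on single letters until the length exceeds $1$ or a letter repeats) and then testing this membership. You instead run the subset dynamics $A_n=\{b \in A \mid b\sqsubset\sigma^n(a)\}$ --- the analogue over all of $A$ of the graph $G$ the paper uses for minimal alphabets --- and characterize $a\in B$ by the condition that every letter on the limit cycle of this orbit has a one-letter image. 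Both implications of your key claim are sound: non-erasure makes $\left(\lvert\sigma^n(a)\rvert\right)_{n\geq 0}$ non-decreasing, and any $b\in A_a^\infty$ with $\lvert\sigma(b)\rvert\geq 2$ lies in $A_n$ for infinitely many $n$ (the cycle alphabets recur), which forces the lengths to diverge; this is exactly what disposes of the plateau phenomenon you flag, and the easy direction is immediate since for $n$ past the transient every letter of $\sigma^n(a)$ lies in $A_a^\infty$. As for what each approach buys: the paper's criterion needs only $\lvert A\rvert$ iterations but, as stated, evaluates the word $\sigma^{\lvert A\rvert}(a)$ itself (potentially long when $a$ is growing), together with the auxiliary letter-level notion $P$; your version works entirely with subalphabets and single-letter image lengths, never expanding a long word, at the price of an orbit in $\mathcal{P}(A)$ whose transient plus period is only bounded by $2^{\lvert A\rvert}$ in the worst case. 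Either way, $B$ is computable and $C=A\setminus B$ follows.
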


As an application of our theorem, we are able to compute and count the minimal components of a given substitution subshift. If $\sigma : A \rightarrow A^+$ is a substitution, 
the number of minimal components denoted by $MC(\sigma)$.

\begin{corollary}\label{count}
	Let $\sigma : A \rightarrow A^+$ be a substitution. Then $MC(\sigma)$ is computable.
\end{corollary}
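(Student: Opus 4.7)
The plan is to reduce the count to finite effective enumerations supplied by \Cref{main}. As a preliminary step, I would use \Cref{compB} to compute $B$ and $C$, so that from now on both alphabets are known. By the very definition preceding \Cref{main}, every minimal component of $X_\sigma$ is either tame (containing a growing letter) or wild (contained in $B^\mathbb{Z}$), and no component is of both types. It therefore suffices to enumerate and count the two kinds separately, and return their sum.

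For the tame count, I would apply \Cref{main}(i): each tame minimal component has the form $X_{\sigma^k|_{D \cup B}}$ for some subalphabet $D \subset C$ and a corresponding integer $k$ determined by $D$. Since $C$ has been computed and is finite, there are only $2^{|C|}$ candidate subalphabets $D$. For each candidate, I would invoke the effective procedure from \Cref{tcomp} (the directed-graph construction of minimal alphabets outlined before \Cref{main}) to decide whether $D$ is suitable and, if so, to compute the accompanying integer $k$ and the main sub-substitution $\tau_D = \sigma^k|_{D \cup B}$. Different suitable $D$'s might a priori produce the same subshift; \Cref{reduced} guarantees a canonical reduced admissible alphabet $E$ associated with each tame component, so by reducing every $\tau_D$ to this canonical form and comparing reduced alphabets, I can decide equalities and obtain the exact number $T$ of tame components.

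For the wild count, I would apply \Cref{main}(ii). First compute $C_{liso}$ and $C_{riso}$: by \Cref{Lbounded} the length of factors in $\mathscr L(X_\sigma) \cap B^*$ is bounded by a computable constant, which bounds the iterates $\sigma^n(c)$ one needs to inspect in order to decide whether $c$ is left- or right-isolated. For each $c \in C_{liso}$ and each $c \in C_{riso}$ I would then compute the words $LP(c)$ and $RP(c)$ provided by the theorem, and form the single periodic orbits $X({}^\omega LP(c)^\omega)$ and $X({}^\omega RP(c)^\omega)$. Two such orbits $X({}^\omega u^\omega)$ and $X({}^\omega v^\omega)$ coincide if and only if the primitive roots of $u$ and $v$ are cyclic shifts of each other, which is trivially decidable; after deduplicating this finite list I obtain the number $W$ of wild components. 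I would then output $MC(\sigma) = T + W$.

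The main obstacle is not the counting itself but ensuring that every ingredient supplied by \Cref{main} is genuinely computable: the decision procedure for which $D \subset C$ is suitable, the bound on the characterizing integer $k$, and the effective construction of $LP(c)$ and $RP(c)$. All three amount to the constructive content of \Cref{tcomp,wcomp}, so \Cref{count} is, strictly speaking, an immediate consequence of \Cref{main} once these effectiveness properties are read off from its proof; my contribution here is the bookkeeping: partitioning into tame/wild, enumerating finitely many candidates, and providing the decidable equality tests (canonical reduced form for tame components, primitive root up to cyclic shift for wild ones) needed to avoid double-counting.
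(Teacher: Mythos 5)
Your argument follows essentially the same route as the paper (\Cref{countt,countw}): compute $B$ and $C$ via \Cref{compB}, count the tame components through the graph of alphabets and the minimal alphabets, count the wild components by computing the words $LP(c)$, $RP(c)$ and removing duplicates via primitive roots up to cyclic shift, then add the two numbers. Two remarks. First, the only step whose justification is off is your decision procedure for $C_{liso}$ and $C_{riso}$: \Cref{Lbounded} does not supply a computable bound on the factors of $\mathscr L(X_\sigma)\cap B^*$ — it is an equivalence, and precisely when $\sigma$ is wild (the case of interest) those factors have unbounded length, so no such bound exists. The repair is already inside your own toolkit and is what the paper does: a letter can only be left-isolated if it is left-periodic, i.e.\ lies on a cycle of $G_L$ (\Cref{GL}), and for such a letter $c$ one has $c \in C_{liso}$ if and only if $LP(c) \neq \varepsilon$ (\Cref{isocycle}); so one simply computes $LP(c)$ (resp.\ $RP(c)$) for every left- (resp.\ right-) periodic letter and keeps the nonempty ones, with no prior knowledge of $C_{liso}$, $C_{riso}$ needed. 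Second, your deduplication of tame components is harmless but superfluous: by \Cref{compdisjoint} the minimal alphabets contained in $C_{niso}$ are pairwise disjoint, and each tame component $X_{\sigma^k|_{D\cup B}}$ has $alph_C$ equal to $D$, so distinct suitable alphabets already yield distinct components and the tame count is just the number of minimal alphabets inside $C_{niso}$, as in the paper.
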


In particular, we can decide if $X_\sigma$ is essentially minimal. A Python implementation of the computation of $B$ and $C$, of the minimal components of a given substitution subshift as well as their number can be found at \url{https://codeberg.org/RaphaelHENRY/MinimalComponents.git}. According to \Cref{reduced}, we output the tame components of the form $X_{\sigma^k|_E}$ as the couple $(E,k)$ and the wild components as the primitive word $v$.

We also bound $MC(\sigma)$ by the size of the alphabet, as in \Cref{growingBound}.

\begin{corollary}\label{bound}
	Let $\sigma : A \rightarrow A^+$ be a substitution.
	
	(i) If $\lvert B \rvert = 0$, then $MC(\sigma) \leq \lvert C \rvert = \lvert A \rvert$.
	
	(ii) If $\lvert B \rvert = 1$, then $MC(\sigma) \leq \lvert C \rvert = \lvert A \rvert - 1$.
	
	(iii) If $\lvert B \rvert \geq 2$, then $MC(\sigma) \leq 2\lvert C \rvert \leq 2\lvert A \rvert -4$.
\end{corollary}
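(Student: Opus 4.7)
The plan is to combine \Cref{main} with an auxiliary incompatibility lemma, then perform a case analysis on $\lvert B \rvert$. Write $MC(\sigma) = m + w$, where $m$ is the number of tame minimal components and $w$ the number of wild ones. By \Cref{main} (i), $m$ equals the number of minimal subalphabets $D \subset C$ produced by the construction of \Cref{tcomp}; these are pairwise disjoint, so trivially $m \leq \lvert C \rvert$. By \Cref{main} (ii), each wild component has the form $X({}^\omega LP(c)^\omega)$ for some $c \in C_{liso}$ or $X({}^\omega RP(c)^\omega)$ for some $c \in C_{riso}$, so $w \leq \lvert C_{liso} \rvert + \lvert C_{riso} \rvert$.

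The key refinement I would establish is that no letter of $C_{liso} \cup C_{riso}$ belongs to any minimal alphabet. Indeed, suppose $c \in D \cap C_{liso}$ with $\sigma^n(c) = ucv$, $u \in B^+$, $v \in A^*$, and let $\sigma^k|_{D \cup B}$ be the associated main sub-substitution. Since $B^+$ is $\sigma$-invariant, iterating $\sigma^n(c) = ucv$ gives, for every $j \geq 1$, $\sigma^{jn}(c) = u_j \, c \, v_j$ with $u_j \in B^+$ and $v_j \in A^*$. Choosing $j$ with $k \mid jn$ and using that $(\sigma^k|_{D \cup B})^{jn/k} = \sigma^{jn}$ on $D$, one sees that $c$ is left-isolated for $\sigma^k|_{D \cup B}$, contradicting the tameness of main sub-substitutions (guaranteed by \Cref{Shimomura} and the construction of \Cref{tcomp}). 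A symmetric argument handles $c \in C_{riso}$. Consequently,
\begin{equation*}
m \leq \lvert C \rvert - \lvert C_{liso} \cup C_{riso} \rvert.
\end{equation*}

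The three bounds then follow by case analysis. For (i), $B = \emptyset$ forces $C_{liso} = C_{riso} = \emptyset$, hence $w = 0$ and $m \leq \lvert C \rvert = \lvert A \rvert$. For (ii), $\lvert B \rvert = 1$ implies $B^\mathbb{Z} = \{{}^\omega b^\omega\}$ is a single periodic orbit, so $w \leq 1$; if $w = 1$ then $C_{liso} \cup C_{riso} \neq \emptyset$, giving $m \leq \lvert C \rvert - 1$, whence $m + w \leq \lvert C \rvert = \lvert A \rvert - 1$. For (iii), combining $m \leq \lvert C \rvert - \lvert C_{liso} \cup C_{riso} \rvert$ with $w \leq \lvert C_{liso} \rvert + \lvert C_{riso} \rvert \leq 2\lvert C_{liso} \cup C_{riso} \rvert$ yields $m + w \leq \lvert C \rvert + \lvert C_{liso} \cup C_{riso} \rvert \leq 2\lvert C \rvert$, and $\lvert C \rvert = \lvert A \rvert - \lvert B \rvert \leq \lvert A \rvert - 2$ yields the stated $2\lvert A \rvert - 4$.

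The main obstacle will be the incompatibility lemma between isolated letters and minimal alphabets: it is the only step where the structure developed in \Cref{tcomp} really enters, and it upgrades the trivial bound $m \leq \lvert C \rvert$ into $m \leq \lvert C \rvert - \lvert C_{liso} \cup C_{riso} \rvert$. The remaining steps reduce to elementary counting on pairwise disjoint subalphabets together with the degeneration of $B^\mathbb{Z}$ to a single orbit when $\lvert B \rvert = 1$.
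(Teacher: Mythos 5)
Your counting scheme and case analysis coincide with the paper's: the paper first proves (\Cref{borne}) that $TMC(\sigma) \leq \lvert C_{niso} \rvert$, using the pairwise disjointness of minimal alphabets (\Cref{compdisjoint}), and $WMC(\sigma) \leq \lvert C_{liso} \rvert + \lvert C_{riso} \rvert$, and then runs exactly your three cases on $\lvert B \rvert$, including the observation that $\lvert B \rvert = 1$ allows at most one wild component because $B^\mathbb{Z}$ is a single periodic orbit. So the skeleton of your argument, and the final inequalities, are correct.

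The step that does not hold up as written is your ``incompatibility lemma.'' In the form ``no letter of $C_{liso} \cup C_{riso}$ belongs to any minimal alphabet'' it is false for the paper's notion of minimal alphabet (\Cref{minalph}): in the running \Cref{ex0}, $\{3\}$ is a minimal alphabet of period $1$ while $3 \in C_{liso} \cap C_{riso}$. What is true, and all you need, is that only the minimal alphabets contained in $C_{niso}$ give tame components; but in the paper this is not a lemma, it is built into \Cref{mainsub}: a main sub-substitution is by definition $\sigma^k|_{D \cup B}$ with $D \subset C_{niso}$ a minimal alphabet, and its tameness is a consequence of that inclusion, not an independently guaranteed property (\Cref{Shimomura} is not what supplies it). Your proposed proof is therefore circular: for a minimal alphabet $D$ containing an isolated letter there is no ``associated main sub-substitution'' whose tameness you could contradict --- the restriction $\sigma^k|_{D \cup B}$ is simply wild and contributes no tame component (your iteration $\sigma^{jn}(c) = u_j c v_j$ correctly shows this wildness, but that yields no contradiction). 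The repair is immediate: by \Cref{main} (i) the tame components correspond to minimal alphabets $D \subset C_{niso}$, which are pairwise disjoint by \Cref{compdisjoint}, so $m \leq \lvert C_{niso} \rvert = \lvert C \rvert - \lvert C_{liso} \cup C_{riso} \rvert$; this is precisely the paper's \Cref{borne} (i), and with it the rest of your proof goes through verbatim.
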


We provide examples to show that these upper bounds - in fact every number between 1 and the upper bound - can be reached.

Finally, in \Cref{scase} we detail the computation of the minimal components for all substitutions on two letters, which provides numerous examples to illustrate the tools developed in this paper.

\begin{remark}
	The non-erasing assumption on the substitution can be removed from all our results, the only difference being that computing bounded letters is more complicated.
	
	In particular, as D0L-systems are defined with morphisms instead of substitutions, the way we characterize, compute and count the wild components of substitution subshifts can be directly used to characterize, compute and count the infinite repetitions over bounded letters in a D0L-system.
\end{remark}

To conclude, we discuss in \Cref{discuss} some open questions and generalizations of our results.

\section{Tame minimal components}\label{tcomp}

\subsection{Dynamics of alphabets}

\subsubsection{Minimal alphabets}

In this section we identify the subalphabets $D \subset A$ for which there exists $k \geq 1$ such that $\sigma^k(D) \subset D^+$ in order to study the restrictions $\sigma^k|_D : D \rightarrow D^+$. Our goal is to determine when such a restriction is l-primitive and tame (so that it generates a minimal substitution subshift) so we study the action of $\sigma$ on the subalphabets of $C$ rather than on the subalphabets of $A$, and we later add the bounded letters.

\begin{definition}\label{G}
	In order to represent how $\sigma$ acts on the subalphabets of $C$, we define the directed graph $G := (V, E)$ by
	\begin{itemize}
		\item $V := \mathscr P(C) \backslash \{\emptyset\}$,
		\item $E := \left\{\left(D, \displaystyle \bigcup_{a \in D} alph_C(\sigma(a))\right) ~\middle|~ D \in V\right\} \subset V^2$.
	\end{itemize}
	If $(D,D') \in E$, we write $D \rightarrow D'$. If $k \geq 1$ and $D \underbrace{\rightarrow ... \rightarrow}_{k \textrm{ times}} D'$, we write $D \xrightarrow[k]{} D'$, and in that case $D' = \displaystyle \bigcup_{a \in D} alph_C\left(\sigma^k(a)\right)$.
\end{definition}

\begin{example}\label{exG}
	Following \Cref{ex0}, the graph $G$ has 15 vertices so let us display the orbit of the singletons only:
	\begin{center}
    \begin{tikzpicture} [node distance = 2cm, on grid, auto]
        \node (q0) [state] {$\{0\}$};
        \node (q1) [state, right = of q0] {$\{1\}$};
        \node (q2) [state, right = of q1] {$\{2\}$};
        \node (q3) [state, right = of q2] {$\{3\}$};
        \path [-stealth, thick]
            (q0) edge [bend left] node {} (q1)
            (q1) edge [bend left] node {} (q0)
            (q2) edge [loop right] node {} (q2)
            (q3) edge [loop right] node {} (q3);
	\end{tikzpicture}
	\end{center}
\end{example}

The following lemma shows that $G$ behaves well with inclusion and union.

\begin{lemma}\label{inclusion}
	Let $k \geq 1$ and $D_1,D_2,D_3,D_4 \in V$.
	
	(i) If $D_1 \subset D_2$, $D_1 \xrightarrow[k]{} D_3$ and $D_2 \xrightarrow[k]{} D_4$, then $D_3 \subset D_4$.
	
	(ii) If $D_1 \rightarrow D_3$ and $D_2 \rightarrow D_4$, then $D_1 \cup D_2 \rightarrow D_3 \cup D_4$.
\end{lemma}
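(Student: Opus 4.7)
The plan is to unpack the definitions and reduce both statements to elementary set-theoretic facts about indexed unions, so no substantive obstacle is expected.

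For part (i), I would start from the explicit formula in \Cref{G}: the hypothesis $D_1 \xrightarrow[k]{} D_3$ gives $D_3 = \bigcup_{a \in D_1} alph_C(\sigma^k(a))$, and similarly $D_4 = \bigcup_{a \in D_2} alph_C(\sigma^k(a))$. Since $D_1 \subset D_2$, the union defining $D_3$ is taken over a subfamily of the union defining $D_4$, hence $D_3 \subset D_4$. The only point worth mentioning is that the formula $D' = \bigcup_{a \in D} alph_C(\sigma^k(a))$ for $D \xrightarrow[k]{} D'$ is well-defined: this is stated in \Cref{G}, and in particular shows that for fixed $D$ and $k$, there is a unique $D'$ with $D \xrightarrow[k]{} D'$, so the statement is unambiguous.

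For part (ii), I would again unfold the definitions: from $D_1 \rightarrow D_3$ and $D_2 \rightarrow D_4$ we have $D_3 = \bigcup_{a \in D_1} alph_C(\sigma(a))$ and $D_4 = \bigcup_{a \in D_2} alph_C(\sigma(a))$. Then by splitting the index set,
\[
\bigcup_{a \in D_1 \cup D_2} alph_C(\sigma(a)) = \bigcup_{a \in D_1} alph_C(\sigma(a)) \cup \bigcup_{a \in D_2} alph_C(\sigma(a)) = D_3 \cup D_4,
\]
which is exactly $D_1 \cup D_2 \rightarrow D_3 \cup D_4$. Note that $D_1 \cup D_2$ and $D_3 \cup D_4$ are indeed elements of $V = \mathscr P(C) \setminus \{\emptyset\}$ since each of the four sets is nonempty.

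Since both parts are immediate from the formula in \Cref{G}, the main task is really just bookkeeping; the lemma is a basic monotonicity/additivity statement for the graph $G$, stated here so it can be invoked freely in later sections where subalphabets generated by $\sigma$ are compared or combined.
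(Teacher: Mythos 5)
Your proof is correct and matches the paper's (omitted, reader-left) argument exactly: both parts follow by unfolding the formula $D' = \bigcup_{a \in D} alph_C(\sigma^k(a))$ from \Cref{G} and using monotonicity and additivity of indexed unions. Nothing further is needed.
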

\begin{proof}
	The proof is left to the reader.
\end{proof}
\newpage
\begin{remark}\label{generators}
	\Cref{inclusion} (ii) implies that the graph $G$ is entirely determined by the alphabets $D_a$ such that $\{a\} \rightarrow D_a$ for each $a \in C$. We call these alphabets the \textit{generators} of $G$, they are particularly relevant when we do computations.
\end{remark}

New let us identify the cyclic behaviors in $G$.

\begin{definition}\label{minalph}
	We say that $D \in V$ is a \textit{$k$-periodic} alphabet if $k$ is the smallest positive integer such that $D \xrightarrow[k]{} D$. We say that $D \in V$ is a \textit{minimal} alphabet if there exists $k \geq 1$ such that $D$ is $k$-periodic and has no proper periodic subalphabet.
\end{definition}

Note that every periodic alphabet contains at least one minimal alphabet.

\begin{lemma}\label{Eminimal}
	Let $D$ be a minimal alphabet of period $k$ and let $E$ such that $D \rightarrow E$. Then $E$ is a minimal alphabet of period $k$.
\end{lemma}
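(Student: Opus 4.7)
The plan is to regard $G$ as a \emph{functional graph}: by \Cref{G}, every vertex $D \in V$ has a unique out-neighbour, namely $\bigcup_{a \in D} alph_C(\sigma(a))$. I will rely on two standard facts about such graphs: a vertex is periodic exactly when it lies on a cycle, and every vertex on a cycle of length $\ell$ has period $\ell$.

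From this, the period of $E$ is immediate. Since $D$ is $k$-periodic it sits on a cycle of length exactly $k$, and its unique successor $E$ lies on the same cycle; hence $E$ is also $k$-periodic. So it only remains to prove that $E$ has no proper periodic subalphabet.

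Suppose for contradiction that $E' \subsetneq E$ is periodic, of some period $p \geq 1$. Write $G_i$ for the $i$-th successor of $E'$ in $G$ (so $G_0 = E'$, $G_p = E'$), and $E_i$ for the $i$-th successor of $E$. Iterating \Cref{inclusion}(i) starting from $E' \subseteq E$ yields $G_i \subseteq E_i$ for every $i \geq 0$. Because $E$ lies on the cycle $D \to E \to E_1 \to \cdots \to E_{k-2} \to D$, we have $E_{k-1} = D$, and therefore $G_{k-1} \subseteq D$. Moreover $G_{k-1}$ lies on the same cycle as $E'$, so it is itself periodic. If $G_{k-1} \subsetneq D$, then $D$ has a proper periodic subalphabet, contradicting its minimality.

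The delicate case, which I expect to be the main (and only real) obstacle, is when $G_{k-1} = D$. Then the period of $G_{k-1}$ equals both $p$ (inherited from $E'$, since they lie on the same cycle) and $k$ (the period of $D$), forcing $p = k$. Consequently $G_k = G_0 = E'$ by periodicity of $E'$, while on the other hand $G_k$ is simply the successor of $G_{k-1} = D$, namely $E$. This gives $E' = E$, contradicting $E' \subsetneq E$. Either way we reach a contradiction, so $E$ has no proper periodic subalphabet and is therefore a minimal alphabet of period $k$.
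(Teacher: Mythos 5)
Your proposal is correct and follows essentially the same route as the paper's proof: push the periodic subalphabet $E' \subseteq E$ forward $k-1$ steps using \Cref{inclusion} (i) so that it lands inside $D$, invoke the minimality of $D$, and then use one more step (the functionality of $G$) to conclude $E' = E$. The only difference is cosmetic: the paper handles your two cases ($G_{k-1} \subsetneq D$ and $G_{k-1} = D$) in one stroke, since minimality of $D$ immediately gives $G_{k-1} = D$ and equality of the periods.
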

\begin{proof}
	First, $E$ is a $k$-periodic alphabet. Let $E' \subset E$ be a $k'$-periodic subalphabet. Also let $D'$ be the $k'$-periodic alphabet such that $E' \xrightarrow[k-1]{} D'$. We have $E \xrightarrow[k-1]{} D$ so, by \Cref{inclusion} (i), $D' \subset D$. Then, as $D$ is minimal, we have $D'=D$ and $k'=k$. Finally, $E' \xrightarrow[k]{} E'$ and $E' \xrightarrow[k-1]{} D \rightarrow E$ so $E'=E$.
\end{proof}

We now show an equivalent property to the minimality of alphabets, which will be easier to handle in the proofs.

\begin{lemma}\label{puitminimal}
	Let $D$ be a $k$-periodic alphabet. Then $D$ is minimal if and only if for all $a \in D$, there exists $l_a \geq 1$ such that $\{a\} \xrightarrow[kl_a]{} D$.
\end{lemma}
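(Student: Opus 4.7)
The plan is to prove the two implications separately, with \Cref{inclusion} (i) serving as the main tool in each direction.

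For the forward direction (\emph{$D$ minimal} $\Rightarrow$ \emph{the orbit property}), I would fix $a \in D$ and track the forward orbit of $\{a\}$ under the $k$-step relation: set $E_0 = \{a\}$ and let $E_{n+1}$ be determined by $E_n \xrightarrow[k]{} E_{n+1}$, so that $\{a\} \xrightarrow[nk]{} E_n$. Applying \Cref{inclusion} (i) inductively, starting from $\{a\} \subset D$ and $D \xrightarrow[k]{} D$, shows that $E_n \subset D$ for every $n \geq 0$. Since $D$ has only finitely many subsets, the pigeonhole principle produces indices $i < j$ with $E_i = E_j$. Then $E_i \xrightarrow[(j-i)k]{} E_i$, so $E_i$ admits a period and is therefore a periodic subalphabet of $D$. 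Minimality of $D$ forces $E_i = D$, giving $\{a\} \xrightarrow[ik]{} D$. If $i \geq 1$ we take $l_a = i$; otherwise $i = 0$ forces $D = \{a\}$ and the $k$-periodicity of $D$ gives $\{a\} \xrightarrow[k]{} D$, so $l_a = 1$.

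For the backward direction, I would argue by contradiction. Suppose $D' \subsetneq D$ is a proper periodic subalphabet of period $k'$. Since $\emptyset \notin V$, we can pick $a \in D'$, and the hypothesis yields $\{a\} \xrightarrow[k l_a]{} D$. The key idea is to choose an iteration count $N$ witnessing both $D' \xrightarrow[N]{} D'$ and $\{a\} \xrightarrow[N]{} D$; then \Cref{inclusion} (i) applied to $\{a\} \subset D'$ forces $D \subset D'$, contradicting $D' \subsetneq D$. The value $N := k k' l_a$ works: on the one hand $N$ is a multiple of $k'$, so $D' \xrightarrow[N]{} D'$; on the other hand $N - k l_a = k l_a (k' - 1)$ is a non-negative multiple of $k$, so combining $\{a\} \xrightarrow[k l_a]{} D$ with the $k$-periodicity of $D$ gives $\{a\} \xrightarrow[N]{} D$.

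I expect no deep obstacle; the proof is essentially two applications of \Cref{inclusion} (i). The only care needed is the bookkeeping around periods and the choice of common multiple $N$ in the backward direction, plus the harmless edge case $i = 0$ (where $D$ is a singleton) in the forward direction.
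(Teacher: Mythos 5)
Your proof is correct. The forward direction is essentially the paper's argument: follow the $k$-step orbit of $\{a\}$, use \Cref{inclusion} (i) to keep it inside $D$, use finiteness to find a periodic element of the orbit, and invoke minimality of $D$ to identify it with $D$; your explicit treatment of the edge case $i=0$ (forcing $D=\{a\}$) is a harmless refinement of the paper's convention of indexing from $l\geq 1$. The backward direction, however, takes a genuinely different and somewhat leaner route. The paper assumes a minimal periodic subalphabet $D'\subset D$, applies the already-proved forward implication to $D'$ to get $\{a\}\xrightarrow[k'l'_a]{} D'$, and then uses the fact that each vertex of $G$ has out-degree one: after a common number $kk'l$ of steps, $\{a\}$ reaches both $D$ and $D'$, so $D=D'$ (this also implicitly relies on the remark that every periodic alphabet contains a minimal one). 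You instead argue by contradiction from an arbitrary proper periodic subalphabet $D'\subsetneq D$, pick a common iterate $N=kk'l_a$ at which $\{a\}\xrightarrow[N]{} D$ and $D'\xrightarrow[N]{} D'$, and apply \Cref{inclusion} (i) to $\{a\}\subset D'$ to get $D\subset D'$, contradicting properness. Your version avoids invoking the forward implication and the existence of minimal subalphabets inside periodic ones, at the cost of nothing; the paper's version has the small aesthetic advantage of exhibiting the equality $D=D'$ directly via determinism of the orbit map rather than via an inclusion contradiction. Both are complete.
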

\begin{proof}
	Suppose that $D$ is minimal. Let $a \in D$, and for all $l \geq 1$, let $D_l$ be the subalphabet such that $\{a\} \xrightarrow[kl]{} D_l$. Then, as $\{a\} \subset D$ and $D \xrightarrow[kl]{} D$, by \Cref{inclusion} (i), $D_l \subset D$. As $V$ is finite, the sequence $\left(D_l\right)_{l \geq 1}$ is eventually periodic, which means that there exists $l_a \geq 1$ such that $D_{l_a}$ is a periodic alphabet. Then, by minimality of $D$, $D_{l_a} = D$.
	
	Suppose that for all $a \in D$, there exists $l_a \geq 1$ such that $\{a\} \xrightarrow[kl_a]{} D$. Let $D' \subset D$ be a minimal $k'$-periodic alphabet and let $a \in D'$. By supposition, there exists $l_a$ such that $\{a\} \xrightarrow[kl_a]{} D$. Moreover, by the previous implication, there exists $l'_a \geq 1$ such that $\{a\} \xrightarrow[k'l'_a]{} D'$. Setting $l = max(l_a,l'_a)$, we get $\{a\} \xrightarrow[kk'l]{} D$ and $\{a\} \xrightarrow[kk'l]{} D'$, so $D=D'$. Hence $D$ is minimal.
\end{proof}

\begin{remark}
	\Cref{puitminimal} implies that the minimal alphabets are in the orbit of the singletons $\{a\}$ for $a \in C$. Therefore, when searching for minimal alphabets we only need to compute these orbits instead of the entire graph $G$.
\end{remark} 

\begin{example}\label{exminalph}
	Following \Cref{exG}, the minimal alphabets are the 2-periodic alphabets $\{0\}$ and $\{1\}$ and the 1-periodic alphabets $\{2\}$ and $\{3\}$.
\end{example}

We also prove a natural fact that will be useful when we count minimal components in \Cref{numcomp}.

\begin{proposition}\label{compdisjoint}
	The minimal alphabets are pairwise disjoint.
\end{proposition}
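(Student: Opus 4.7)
The plan is to show that if two minimal alphabets share a letter, then they must coincide. So suppose $D_1$ and $D_2$ are minimal alphabets, of respective periods $k_1$ and $k_2$, and suppose there exists some $a \in D_1 \cap D_2$.

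The key observation driving the argument is that, from \Cref{G}, the edge set $E$ defines a function $V \to V$: every vertex has a unique successor, hence for every $n \geq 1$ and every $D \in V$ there is at most one $D' \in V$ with $D \xrightarrow[n]{} D'$. With this in hand, I would apply \Cref{puitminimal} to get integers $l_1, l_2 \geq 1$ such that $\{a\} \xrightarrow[k_1 l_1]{} D_1$ and $\{a\} \xrightarrow[k_2 l_2]{} D_2$. Then, setting $n := k_1 l_1 k_2 l_2$ (or any common multiple of $k_1 l_1$ and $k_2 l_2$) and using the periodicities $D_1 \xrightarrow[k_1]{} D_1$ and $D_2 \xrightarrow[k_2]{} D_2$ iterated the appropriate number of times, I obtain both $\{a\} \xrightarrow[n]{} D_1$ and $\{a\} \xrightarrow[n]{} D_2$. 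By unicity of the successor, this forces $D_1 = D_2$.

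The argument is essentially a pigeonhole/orbit-chasing on a functional graph, and the only mildly delicate point is to clearly invoke the deterministic nature of successors in $G$ together with \Cref{puitminimal}; no real obstacle is expected, and the proof should fit in a few lines.
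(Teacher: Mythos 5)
Your proof is correct and follows essentially the same route as the paper: invoke \Cref{puitminimal} to get $\{a\}$ reaching both alphabets, align the step counts via the periodicities (the paper uses $kk'\max(l_a,l'_a)$ where you use a common multiple of $k_1l_1$ and $k_2l_2$), and conclude from the fact that every vertex of $G$ has a unique successor. Your making the functional nature of $G$ explicit is a point the paper leaves implicit, but the argument is the same.
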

\begin{proof}
	Let $D$ and $D'$ be two minimal alphabets of respective period $k$ and $k'$ such that $D \cap D' \neq \emptyset$. Let $a \in D \cap D'$. By \Cref{puitminimal}, there exists $l_a \geq 1$ such that $\{a\} \xrightarrow[kl_a]{} D$ and $l'_a \geq 1$ such that $\{a\} \xrightarrow[k'l'_a]{} D'$. By setting $l = \max\left(l_a, l'_a\right)$, we get $\{a\} \xrightarrow[kk'l]{} D$ and $\{a\} \xrightarrow[kk'l]{} D'$, hence $D=D'$.
\end{proof}

\subsubsection{l-primitivity on subalphabets}

If $D$ is a $k$-periodic alphabet, we have the restriction $\sigma^k|_{D \cup B} : D \cup B \rightarrow (D \cup B)^+$. We show here that the restrictions to minimal alphabets are precisely the l-primitive sub-substitutions of $\sigma$.

\begin{proposition}\label{minimalprimitive}
	Let $D$ be a $k$-periodic alphabet. Then, $D$ is minimal if and only if $\sigma^k|_{D \cup B}$ is l-primitive.
\end{proposition}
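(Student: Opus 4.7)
The plan is to unfold both sides into statements about the directed graph $G$ and then connect them via Lemma \ref{puitminimal}. Concretely, by the definition of l-primitivity applied to $\sigma^k|_{D \cup B}$ (whose set of growing letters is exactly $D$), the right-hand side reads: there exists $n \geq 1$ such that for every $a, b \in D$, $a \sqsubset \sigma^{kn}(b)$. Since $\sigma^{kn}(b) \in (D \cup B)^+$ because $D$ is $k$-periodic, this is equivalent to saying that there exists $n \geq 1$ such that $\{b\} \xrightarrow[kn]{} D$ for every $b \in D$ (the containment $alph_C(\sigma^{kn}(b)) \subset D$ being automatic from \Cref{inclusion}(i) applied to $\{b\} \subset D$ and $D \xrightarrow[kn]{} D$).

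For the forward direction, suppose $D$ is minimal. By \Cref{puitminimal}, for each $b \in D$ there is some $l_b \geq 1$ with $\{b\} \xrightarrow[kl_b]{} D$. To upgrade this to a uniform exponent, I would set $n := \max_{b \in D} l_b$ and observe that concatenating $\{b\} \xrightarrow[kl_b]{} D$ with $D \xrightarrow[k(n-l_b)]{} D$ (which holds because $D$ is $k$-periodic) yields $\{b\} \xrightarrow[kn]{} D$ for every $b \in D$, which is exactly the required l-primitivity.

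For the converse, suppose $\sigma^k|_{D \cup B}$ is l-primitive with index $n$. Then for every $a \in D$ and every $b \in D$ we have $b \sqsubset \sigma^{kn}(a)$, so $D \subset alph_C(\sigma^{kn}(a))$. Combined with the reverse inclusion noted above (from \Cref{inclusion}(i) and $D \xrightarrow[kn]{} D$), we conclude $\{a\} \xrightarrow[kn]{} D$ for every $a \in D$, so \Cref{puitminimal} with $l_a := n$ gives that $D$ is minimal.

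The only real subtlety, and the step I would write out most carefully, is the synchronization of the per-letter exponents $l_b$ into a single uniform $n$ in the forward direction: this is where the $k$-periodicity of $D$ is essential, and it is precisely the reason why l-primitivity (a single exponent for all pairs) and minimality (existence of a returning exponent per letter) are equivalent rather than just the trivially related conditions.
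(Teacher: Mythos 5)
Your proposal is correct and follows essentially the same route as the paper's proof: both reduce minimality via \Cref{puitminimal} to the per-letter condition $\{a\} \xrightarrow[kl_a]{} D$ and trade it against a uniform l-primitivity exponent by taking $n = \max_{a \in D} l_a$ and using the $k$-periodicity of $D$. Your write-up just makes explicit the concatenation step and the inclusion $alph_C(\sigma^{kn}(a)) \subset D$ that the paper leaves implicit.
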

\begin{proof}
	Thanks to \Cref{puitminimal}, we are going to show that $\sigma^k|_{D \cup B}$ is l-primitive if and only if for all $a \in D$, there exists $l_a \geq 1$ such that $\{a\} \xrightarrow[kl_a]{} D$.

	Suppose that $\sigma^k|_{D \cup B}$ is l-primitive, which provides $n \geq 1$ such that, for all $a,b \in D$, $a \sqsubset \sigma^{kn}(b)$. Then, for all $a \in D$, we have $alph_C\left(\sigma^{kn}(a)\right) = D$, which means that $\{a\} \xrightarrow[kn]{} D$.
	
	Suppose that, for all $a \in D$, there exists $l_a \geq 1$ such that $\{a\} \xrightarrow[kl_a]{} D$. By setting $n = \displaystyle \max_{a \in D} l_a$, we get $\{a\} \xrightarrow[kn]{} D$ for all $a \in D$, i.e., $alph_C(\sigma^{kn}(a)) = D$, which means that $\sigma^k|_{D \cup B}$ is l-primitive.
\end{proof}

\subsubsection{Dynamics of subshifts}\label{dycomp}

A substitution $\sigma : A \rightarrow A^+$ induces a map $\tilde{\sigma}$ on the subshifts of $X_\sigma$: if $X \subset X_\sigma$ is a subshift, we define the subshift
\begin{equation*}
	\tilde{\sigma}(X) := \left\{T^k\left(\sigma(x)\right) ~\middle|~ x \in X, k \in \mathbb{Z}\right\} \subset X_\sigma.
\end{equation*}

We recall that every word in a substitution subshift can be desubstituted. A proof can be found in \cite[Theorem 5.1]{BPR} for example:

\begin{lemma}\label{phiminus}
	For all $x \in X_\sigma$, there exist $y \in X_\sigma$ and $k \in \mathbb{Z}$ such that $x=T^k(\sigma(y))$.
\end{lemma}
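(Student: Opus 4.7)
The plan is a standard desubstitution argument based on compactness. For each $n \geq 1$, the definition of $X_\sigma$ provides $a_n \in A$ and $k_n \geq 0$ such that $x_{-n}\ldots x_n \sqsubset \sigma^{k_n}(a_n)$. Since $|x_{-n}\ldots x_n| = 2n+1$ grows with $n$ while the lengths of images of single letters under $\sigma$ are uniformly bounded by $M := \max_{a \in A} |\sigma(a)|$, we must have $k_n \geq 1$ for all $n$ large enough. Then $\sigma^{k_n}(a_n) = \sigma(z_n)$ with $z_n := \sigma^{k_n-1}(a_n) \in \mathscr L(\sigma)$. I locate $x_0$ inside $\sigma(z_n)$: there exist a unique index $i_n$ into $z_n$ and a unique offset $p_n \in \{0, 1, \ldots, M-1\}$ such that $x_0$ is the $p_n$-th letter of $\sigma((z_n)_{i_n})$ inside $\sigma(z_n)$, and more generally $x_j = \sigma(z_n)_{p_n + j}$ for all $-n \leq j \leq n$ after aligning $(z_n)_{i_n}$ with position $0$.

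Re-index $z_n$ so that position $i_n$ becomes position $0$, and extend it to a bi-infinite sequence $y^{(n)} \in A^{\mathbb Z}$ by padding with any fixed letter outside. The key geometric observation is that the non-padded region extends at least $i_n - 1 \geq (n-p_n)/M$ positions to the left and $|z_n| - i_n \geq (n-M)/M$ positions to the right of $0$; both tend to infinity with $n$. I then apply compactness twice: first, the $p_n$ lie in a finite set, so after extraction $p_n = p$ is constant; second, by compactness of $A^{\mathbb Z}$ in the prodiscrete topology, a further subsequence gives $y^{(n)} \to y \in A^{\mathbb Z}$.

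It remains to check that $y \in X_\sigma$ and $x = T^p(\sigma(y))$. Any finite factor of $y$ coincides, for $n$ large along the subsequence, with a factor of $z_n$ lying entirely in the non-padded region (by the previous bound), so it belongs to $\mathscr L(\sigma)$; this gives $y \in X_\sigma$. With the convention that position $0$ of $\sigma(y)$ is the first letter of $\sigma(y_0)$, the equality $x_j = \sigma(y)_{p+j}$ holds for every fixed $j \in \mathbb Z$ eventually along the subsequence (since the alignment was already valid for $y^{(n)}$ in the non-padded window and the limit agrees letterwise for large $n$), hence for all $j$, yielding $x = T^p(\sigma(y))$.

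The main obstacle I anticipate is not the compactness extraction itself, which is routine, but the careful bookkeeping of offsets: fixing a consistent convention for how the concatenation $\sigma(y)$ is indexed, tracking how $i_n$ and $p_n$ together determine the global shift $k=p$, and justifying that the arbitrary padding choice does not affect any finite factor of the limit. Everything else follows from the standard facts that $\mathscr L(\sigma)$ is closed under taking factors and that $A^{\mathbb Z}$ is sequentially compact.
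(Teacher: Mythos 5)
Your argument is correct. The paper does not prove this lemma itself --- it simply points to \cite[Theorem 5.1]{BPR} --- so there is no internal proof to compare against; your compactness/desubstitution argument is the standard self-contained route and it goes through. The essential points are all present: $k_n\geq 1$ for $n\geq 1$ because $|\sigma^{0}(a_n)|=1<2n+1$; the anchoring of position $0$ at the first letter of $\sigma((z_n)_{i_n})$ with offset $p_n<M$; the growth of the non-padded window on both sides of $0$ (your constants are slightly off, e.g.\ the left bound should read $i_n\geq (n-p_n)/M$ rather than $i_n-1\geq (n-p_n)/M$, but only the divergence matters); the two extractions (finitely many offsets, then sequential compactness of $A^{\mathbb Z}$); membership $y\in X_\sigma$ because every factor of $y$ is eventually a factor of some $z_n=\sigma^{k_n-1}(a_n)\in\mathscr L(\sigma)$; and the letterwise identity $x_j=\sigma(y)_{p+j}$, which uses (implicitly but correctly) that $\sigma$ is non-erasing, so the letter of $\sigma(y)$ at a fixed position is determined by a fixed finite window of $y$ lying in the non-padded region for large $n$. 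One cosmetic caveat: the pair $(i_n,p_n)$ is unique only after fixing one occurrence of $x_{-n}\ldots x_n$ in $\sigma(z_n)$, which is all you need; you might phrase it as ``choose an occurrence'' rather than assert uniqueness.
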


This means that the reciprocal of $\tilde{\sigma}$ is defined, it is the subshift
\begin{equation*}
	\tilde{\sigma}^{-1}(X) := \left\{x \in X_\sigma ~\middle|~ \sigma(x) \in X\right\} \subset X_\sigma,
\end{equation*}
such that, for all subshift $X \subset X_\sigma$, $\tilde{\sigma}\left(\tilde{\sigma}^{-1}(X)\right) = X$. More generally, for all subshift $X \subset X_\sigma$ and all $n \geq 1$, $\tilde{\sigma}^n\left(\tilde{\sigma}^{-n}(X)\right) = X$.

\begin{remark}\label{phin}
	If $X \subset X_\sigma$ is a subshift such that $alph_C(X) \neq \emptyset$, then $alph_C\left(\tilde{\sigma}(X)\right) \neq \emptyset$ and $alph_C(X) \rightarrow alph_C\left(\tilde{\sigma}(X)\right)$. More generally, for $n \geq 1$, $alph_C(X) \xrightarrow[n]{} alph_C\left(\tilde{\sigma}^n(X)\right)$.
\end{remark}

We can now prove that the subshifts of $X_\sigma$ have a strong cyclic behavior.

\begin{proposition}\label{Xperiodic}
	Let $X \subset X_\sigma$ be a subshift. Then there exists $k \geq 1$ such that $\tilde{\sigma}^k(X) = X$, and if in addition $alph_C(X) \neq \emptyset$, then $alph_C(X)$ is a $k$-periodic alphabet and $X_{\sigma^k|_{alph_C(X) \cup B}} \subset X$.
\end{proposition}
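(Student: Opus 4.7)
The plan is to exploit quasi-minimality together with the desubstitution lemma and the boundedness of iterates of letters in $B$. First, \Cref{quasiminimal} guarantees that the set $\mathcal{S}$ of subshifts of $X_\sigma$ is finite, and \Cref{phiminus} gives $\tilde{\sigma}(\tilde{\sigma}^{-1}(Y)) = Y$ for every $Y \in \mathcal{S}$, so $\tilde{\sigma} : \mathcal{S} \to \mathcal{S}$ is a surjection on a finite set, hence a permutation. The $\tilde{\sigma}$-orbit of $X$ is therefore cyclic, which produces an integer $k \geq 1$ with $\tilde{\sigma}^k(X) = X$. If $alph_C(X) \neq \emptyset$, iterating \Cref{phin} yields $alph_C(X) \xrightarrow[k]{} alph_C(\tilde{\sigma}^k(X)) = alph_C(X)$, so $k$ is a period of $alph_C(X)$ in the graph $G$; taking $k$ to be the minimal period of $X$ under $\tilde{\sigma}$ makes $alph_C(X)$ a $k$-periodic alphabet in the sense of \Cref{minalph}.

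For the inclusion $X_{\sigma^k|_{alph_C(X) \cup B}} \subset X$, set $\tau := \sigma^k|_{alph_C(X) \cup B}$ and let $x \in X_\tau$. Since $X$ is closed, it suffices to show that every factor of $x$ lies in $\mathscr L(X)$. Let
\begin{equation*}
	M := \max_{b \in B}\sup_{n \geq 0}\left\lvert\sigma^n(b)\right\rvert,
\end{equation*}
which is finite by definition of $B$. For any factor $u \sqsubset x$ with $\lvert u \rvert > M$, the definition of $X_\tau$ provides $a \in alph_C(X) \cup B$ and $n \geq 0$ with $u \sqsubset \tau^n(a) = \sigma^{kn}(a)$; since $\left\lvert \sigma^{kn}(b) \right\rvert \leq M < \lvert u \rvert$ for every $b \in B$, we must have $a \in alph_C(X)$. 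Picking $y \in X$ with $a \sqsubset y$ (possible because $a \in alph_C(X)$), the sequence $\sigma^{kn}(y)$ belongs (up to shift) to $\tilde{\sigma}^{kn}(X) = X$, so $\sigma^{kn}(a)$ is a factor of an element of $X$ and therefore $u \in \mathscr L(X)$. Letting $\lvert u \rvert \to \infty$ and using the closedness of $X$ gives $x \in X$, concluding the inclusion.

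The main obstacle is the possible mismatch between the alphabet $alph_C(X) \cup B$ of $\tau$ and the letters actually appearing in $X$: bounded letters in $B$ outside $X$ could a priori produce elements of $X_\tau$ unrelated to $X$. The finiteness of $M$ is precisely what resolves this, ensuring that any $x \in X_\tau$ must exhibit growing letters on arbitrarily large scales and that every sufficiently long factor of $x$ is forced to originate from $\tau^n(c)$ for some $c \in alph_C(X)$, which in turn lies in $\mathscr L(X)$.
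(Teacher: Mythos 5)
Your proof is correct and takes essentially the same route as the paper: quasi-minimality plus the desubstitution lemma make $\tilde{\sigma}$ a permutation of the finitely many subshifts of $X_\sigma$, \Cref{phin} gives $alph_C(X) \xrightarrow[k]{} alph_C(X)$, and the inclusion follows by tracing every factor of $X_{\sigma^k|_{alph_C(X)\cup B}}$ back to some $\sigma^{kl}(c)$ with $c \in alph_C(X)$ and pushing a point of $X$ containing $c$ forward through $\tilde{\sigma}^{kl}(X)=X$. Your bound $M$ making explicit why sufficiently long factors cannot come from bounded letters is a detail the paper's proof leaves implicit (it directly asserts $c \in alph_C(X)$), and your passage from ``$k$ is a period of $alph_C(X)$'' to ``$alph_C(X)$ is $k$-periodic in the sense of \Cref{minalph}'' is exactly the same step the paper itself takes.
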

\begin{proof}
	Consider the sequence of subshifts $\left(\tilde{\sigma}^{-n}(X)\right)_{n \geq 0}$. By \Cref{quasiminimal}, $X_\sigma$ has a finite number of subshifts so this sequence is ultimately periodic. By setting $l \geq 0$ and $k \geq 1$ the smallest integers such that $\tilde{\sigma}^{-l}(X)=\tilde{\sigma}^{-(l+k)}(X)$, we obtain $\tilde{\sigma}^k(X) = \tilde{\sigma}^{l+k}(\tilde{\sigma}^{-l}(X)) = \tilde{\sigma}^{l+k}(\tilde{\sigma}^{-(l+k)}(X)) = X$.
	
	If $alph_C(X) \neq \emptyset$, with \Cref{phin} we have $alph_C(X) \xrightarrow[k]{} alph_C(\tilde{\sigma}^k(X)) = alph_C(X)$, which means that $alph_C(X)$ is $k$-periodic. Let $u \in \mathscr L\left(X_{\sigma^k|_{alph_C(X) \cup B}}\right)$, then there exist $c \in alph_C(X)$ and $l \geq 1$ such that $u \sqsubset \sigma^{kl}(c)$. As $c \in alph_C(X)$, there exists $x \in X$ such that $c \sqsubset x$, then $u \sqsubset \sigma^{kl}(c) \sqsubset \sigma^{kl}(x) \in \tilde{\sigma}^{kl}(X) = X$ so $u \in \mathscr L(X)$. Therefore $\mathscr L\left(X_{\sigma^k|_{alph_C(X) \cup B}}\right) \subset \mathscr L(X)$, which means that $X_{\sigma^k|_{alph_C(X) \cup B}} \subset X$.
\end{proof}

\begin{remark}\label{permutation}
	This means that $\tilde{\sigma}$ is a permutation on the subshifts of $X_\sigma$.
\end{remark}

\subsection{Proof of \Cref*{main} (i)}

We are now able to generalize main sub-substitutions to the general case.

\begin{definition}\label{mainsub}
	Let $\sigma : A \rightarrow A^+$ be a substitution. If $D \subset C_{niso}$ is a minimal alphabet of period $k$, we say that the substitution $\sigma^k|_{D \cup B}$ is a \textit{main sub-substitution} of $\sigma$. In particular, main sub-substitutions are tame (because $D \subset C_{niso}$) and l-primitive (by \Cref{minimalprimitive}).
\end{definition}

\begin{example}\label{exmainsub}
	Following \Cref{exminalph}, we recall that $C_{niso} = \{0,1,2\}$ so the minimal alphabets included in $C_{niso}$ are $\{0\}$ of period 2, $\{1\}$ of period 2 and $\{2\}$ of period 1. We then add the bounded letters that appear in the associated subshift, so the main sub-substitutions of $\sigma$ are $\sigma^2|_{\{0,5\}}$, $\sigma^2|_{\{1,4,6\}}$ and $\sigma|_{\{2,4,5,6\}}$.
\end{example}

\Cref{Shimomura} (i) directly provides the following result.

\begin{proposition}\label{tame1}
	Let $\tau$ be a main-substitution of $\sigma$. Then $X_\tau$ is a minimal component of $X_\sigma$.
\end{proposition}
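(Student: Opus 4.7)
The plan is to assemble the machinery already built rather than to introduce new ideas. Write $\tau = \sigma^k|_{D \cup B}$ with $D \subset C_{niso}$ a $k$-periodic minimal alphabet. First I would verify that $\tau$ is a well-defined substitution on $D \cup B$ whose growing letters are exactly $D$ and whose bounded letters are exactly $B$: for $b \in B$ we have $\sigma^k(b) \in B^+$; for $a \in D$ the $k$-periodicity of $D$ yields $alph_C(\sigma^k(a)) \subset D$, so $\sigma^k(a) \in (D \cup B)^+$. The growing/bounded split is inherited because $|\tau^n(a)| = |\sigma^{kn}(a)|$ for every $a \in D \cup B$.

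Next I would transfer tameness from $\sigma$ to $\tau$. If some $c \in D$ were $\tau$-left-isolated, witnessed by $\tau^n(c) = ucv$ with $u \in B^+$ and $v \in (D \cup B)^*$, then $\sigma^{kn}(c) = ucv$ with $u \in B^+$ and $v \in A^*$ would make $c$ a $\sigma$-left-isolated letter, contradicting $D \subset C_{niso}$. The right-isolated case is symmetric, so $\tau$ is tame. Combined with \Cref{minimalprimitive} applied to the minimal alphabet $D$, which gives that $\tau$ is l-primitive, \Cref{Shimomura} (i) then yields that $X_\tau$ is minimal.

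Finally, the inclusion $X_\tau \subset X_\sigma$ follows from $\mathscr L(\tau) \subset \mathscr L(\sigma)$: every word of $\mathscr L(\tau)$ is a factor of some $\tau^n(a) = \sigma^{kn}(a)$ with $a \in D \cup B \subset A$, hence of some $\sigma^m(a') \in \mathscr L(\sigma)$. Therefore every $x \in X_\tau$ has all its factors in $\mathscr L(\sigma)$, which by definition puts $x$ in $X_\sigma$. Together with minimality, this realizes $X_\tau$ as a minimal component of $X_\sigma$. I do not expect any genuine obstacle: the proposition is essentially a packaging of \Cref{mainsub} with \Cref{minimalprimitive} and Shimomura's theorem, the only non-bookkeeping step being the (straightforward) inheritance of tameness by passing from $\sigma$ to the restriction $\sigma^k|_{D \cup B}$.
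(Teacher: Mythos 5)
Your proof is correct and follows essentially the same route as the paper: the paper packages the tameness (from $D \subset C_{niso}$) and l-primitivity (via \Cref{minimalprimitive}) into \Cref{mainsub} and then invokes \Cref{Shimomura} (i), exactly as you do. You merely spell out the details the paper leaves implicit (well-definedness of the restriction, inheritance of tameness via $\tau^n(c)=\sigma^{kn}(c)$, and the inclusion $\mathscr L(\tau) \subset \mathscr L(\sigma)$ giving $X_\tau \subset X_\sigma$), all of which are fine.
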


The converse relies on \Cref{Xperiodic}.

\begin{proposition}\label{tame2}
	Let $X$ be a tame minimal component of $X_\sigma$. Then there exists a main sub-substitution $\tau$ of $\sigma$ such that $X = X_\tau$.
\end{proposition}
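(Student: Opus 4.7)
The plan is to prove that every tame minimal component $X$ of $X_\sigma$ is realized as $X_\tau$ for the main sub-substitution $\tau = \sigma^{k'}|_{D \cup B}$, where $D := alph_C(X)$ and $k'$ is the period of $D$. After setting up $D$ as a substitution subshift alphabet via \Cref{Xperiodic}, the argument has two main steps: showing $D \subset C_{niso}$, and showing that $D$ is a minimal alphabet. For the setup, since $X$ is tame, $D$ is nonempty. \Cref{Xperiodic} yields some $k \geq 1$ such that $\tilde{\sigma}^k(X) = X$, the alphabet $D$ is $k$-periodic, and $X_{\sigma^k|_{D \cup B}} \subset X$; minimality of $X$ then upgrades this inclusion to an equality.

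To show $D \subset C_{niso}$, I would argue by contradiction. Suppose some $c \in D$ is left-isolated (the right-isolated case being symmetric). Following the proof of \Cref{BZ}, there exist $n \geq 1$ and $u \in B^+$ such that $\sigma^{nl}(c)$ begins with $\sigma^{l-1}(u) \cdots \sigma(u)\,u\,c$ for every $l \geq 1$. Since $c$ occurs in some $x \in X$ and $\sigma^{nl}(x) \in \tilde{\sigma}^{nl}(X) = X$ whenever $k$ divides $nl$, the subshift $X$ contains words in $B^+$ of arbitrary length to the immediate left of occurrences of $c$. Shifting these occurrences arbitrarily far to the right and extracting a convergent subsequence produces a point of $X$ that lies entirely in $B^\mathbb{Z}$; by minimality $X \subset B^\mathbb{Z}$, contradicting tameness.

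For minimality of $D$, I would pick any minimal subalphabet $D' \subset D$ (which exists because $D$ is periodic) of some period $k'$. By the previous step, $D' \subset C_{niso}$, so $\tau := \sigma^{k'}|_{D' \cup B}$ is a main sub-substitution and, by \Cref{tame1}, $X_\tau$ is a minimal subshift of $X_\sigma$. To prove $X_\tau \subset X$, fix $u \in \mathscr L(X_\tau)$ and any $c \in D'$: by minimality of $X_\tau$ together with the l-primitivity of $\tau$ (\Cref{minimalprimitive}), the iterate $\sigma^{k'n}(c)$ contains $u$ as a factor for all $n$ sufficiently large. Choosing such an $n$ that is additionally a multiple of $k / \gcd(k,k')$ makes $k'n$ a multiple of $k$; using that $c \in D$ occurs in some $x \in X$ gives $u \sqsubset \sigma^{k'n}(c) \sqsubset \sigma^{k'n}(x) \in \tilde{\sigma}^{k'n}(X) = X$, so $u \in \mathscr L(X)$. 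Therefore $\mathscr L(X_\tau) \subset \mathscr L(X)$ and $X_\tau \subset X$; minimality of $X$ forces $X_\tau = X$, whence $D' = alph_C(X_\tau) = alph_C(X) = D$. So $D$ itself is minimal, and $\tau = \sigma^{k'}|_{D \cup B}$ is the required main sub-substitution.

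The delicate step is the inclusion $X_\tau \subset X$: a priori the minimal subshift generated by the minimal subalphabet $D'$ could be a different minimal component of $X_\sigma$ altogether. The argument reconciles this by using l-primitivity of $\tau$ to express any word of $\mathscr L(X_\tau)$ as a factor of arbitrarily long iterates $\sigma^{k'n}(c)$, and then by a congruence argument modulo $k/\gcd(k,k')$ that forces the corresponding image $\sigma^{k'n}(x)$ to land back in $X$. The periods of $D$ and $D'$ need not divide one another, so this congruence step genuinely uses the flexibility granted by l-primitivity rather than any direct divisibility.
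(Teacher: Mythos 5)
Your proof is correct, but it takes a genuinely different route from the paper for the key step. The setup is the same (via \Cref{Xperiodic} and minimality, $X = X_{\sigma^k|_{D \cup B}}$ with $D = alph_C(X)$ $k$-periodic), but from there the paper works top-down: it gets $D \subset C_{niso}$ in one line from \Cref{Lbounded} (minimality plus a growing letter gives bounded gaps, hence bounded-length words in $\mathscr L(X)\cap B^*$), and then proves directly that $\sigma^k|_{D\cup B}$ is l-primitive — since $\tilde{\sigma}^k(X)=X$ puts every $\sigma^{kl}(d)$ in $\mathscr L(X)$, and bounded gaps force long such words to contain all of $D$ — so $D$ is minimal by \Cref{minimalprimitive} and the same exponent $k$ already does the job. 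You instead argue bottom-up: you re-derive $D \subset C_{niso}$ by a compactness/limit-point argument modeled on the proof of \Cref{BZ} rather than citing \Cref{Lbounded}, and for minimality you pick a minimal subalphabet $D' \subset D$ of period $k'$, invoke \Cref{tame1} for $X_\tau$ with $\tau = \sigma^{k'}|_{D'\cup B}$, and push $\mathscr L(X_\tau)$ into $\mathscr L(X)$ via the congruence trick choosing $n$ with $k \mid k'n$, concluding $X_\tau = X$ and a posteriori $D'=D$. Both approaches are valid; the paper's is shorter and identifies the exponent immediately, while yours trades the direct l-primitivity verification for reliance on the already-proved \Cref{tame1} plus the divisibility step. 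One spot in your argument is stated a bit thinly: the claim that $u \sqsubset \sigma^{k'n}(c)$ for all sufficiently large $n$ needs a word of justification that $c \in \mathscr L(X_\tau)$ (or a detour: extend $u$ to a long word of $\mathscr L(X_\tau)$, which by tameness must sit inside $\tau^m(c')$ for a growing $c'$, then transfer to $\tau^n(c)$ by l-primitivity); this is true and provable from exactly the ingredients you name, so it is an omission of detail rather than a gap.
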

\begin{proof}
	First, \Cref{Xperiodic} provides $k \geq 1$ such that $alph_C(X)$ is $k$-periodic and $X_{\sigma^k|_{alph_C(X) \cup B}} \subset X$. Then, by minimality of $X$, we have equality. It remains to show that $\sigma^k|_{alph_C(X) \cup B}$ is a main sub-substitution of $\sigma$.
	
	As $alph_C(X) \neq \emptyset$ and $X$ is minimal, growing letters occur with bounded gaps in $X$. In other words, the words in $\mathscr L\left(X_{\sigma^k|_{alph_C(X) \cup B}}\right) \cap B^*$ have bounded length, so, by \Cref{Lbounded}, $\sigma^k|_{alph_C(X) \cup B}$ is tame, i.e., $alph_C(X) \subset C_{niso}$.
	
	\Cref{Xperiodic} also states that $\tilde{\sigma}^k(X) = X$, so, for all $d \in alph_C(X)$, $ \sigma^{kl}(d) \in \mathscr L(X)$. As $X$ is minimal, for $l$ large enough, $\sigma^{kl}(d)$ contains every growing letter of $alph_C(X)$. This means that $\sigma^k|_{alph_C(X) \cup B}$ is l-primitive, and by \Cref{minimalprimitive} $alph_C(X)$ is minimal. Therefore, $\sigma^k|_{alph_C(X) \cup B}$ is a main sub-substitution.
\end{proof}

\Cref{tame1,tame2} complete the proof of \Cref{main} (i).

\begin{example}\label{extcomp}
	Following \Cref{exmainsub}, the tame minimal components of $X_\sigma$ are $X_{\sigma^2|_{\{0,5\}}}$, $X_{\sigma^2|_{\{1,4,6\}}}$ and $X_{\sigma|_{\{2,4,5,6\}}}$.
\end{example}

\section{Wild minimal components}\label{wcomp}

\subsection{Maximal bounded factors}

The purpose of this section is to prove \Cref{LXB}, which is an analog of a result proved for D0L-systems:

\begin{proposition}[{\cite[Theorem 12]{KS}}]\label{3forms}
	Let $G = (A, \sigma, w)$ be a pushy D0L-system (i.e., $\sigma$ is wild). Then there exists $L \geq 1$ and a finite subset $U \subset B^+$ such that any factor from $\mathscr L(G) \cap B^*$ has one of the following forms:
	
	• $w_1$
	
	• $w_1 u_1^{k_1} w_2$
	
	• $w_1 u_1^{k_1} w_2 u_2^{k_2} w_3$
	
	where $u_1, u_2 \in U$ , $\lvert w_j \rvert < L$ for all $j \in \{1, 2, 3\}$, and $k_1, k_2 \geq 1$.
\end{proposition}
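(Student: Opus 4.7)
The plan is to prove the proposition by tracking how bounded factors $v \in \mathscr L(G) \cap B^*$ sit within the iterates $\sigma^n(w)$, relative to the positions of growing letters.

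The starting point is that $\sigma$ restricted to $B$ satisfies $\sigma(B) \subset B^*$, so the sequence $(\sigma^k|_B)_{k \geq 0}$ is eventually periodic: there exist $k_0, p \geq 1$ with $\sigma^{k_0+p} = \sigma^{k_0}$ on $B$. In particular, for any $u \in B^+$ the sequence $(\sigma^{kp}(u))_{k \geq 0}$ is eventually periodic in $k$. Combined with the proof of \Cref{BZ}, each $c \in C_{liso}$ with $\sigma^n(c) = ucv$ and $u \in B^+$ produces, upon iteration, a bounded left-tail of $\sigma^{nk}(c)$ equal to $\sigma^{(k-1)n}(u) \cdots \sigma^n(u) u$ which, after enough iterations of the $\sigma|_B$-cycle, becomes the concatenation of a fixed short prefix, a repeated periodic block, and a fixed short suffix. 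Any finite such left-tail therefore reads as $w_1 u_1^{k_1} w_2$ for a primitive $u_1 \in B^+$ and short $w_1, w_2$. The symmetric statement holds for $c \in C_{riso}$, producing right-tails of the same shape. Let $U \subset B^+$ collect the finitely many primitive blocks arising this way, one for each left- or right-isolated letter.

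Now, write a given $\sigma^n(w)$ as $b_0 c_0 b_1 c_1 \cdots c_{r-1} b_r$, where the $c_i$ are all the growing letters and the $b_i \in B^*$ are the maximal bounded gaps between them. Any $v \in \mathscr L(G) \cap B^*$ sits inside some $b_i$, so it suffices to analyse the gaps. A gap bordered by growing letters on both sides is obtained, by tracing back one step to $\sigma^{n-1}(w)$, as a concatenation of: (i) the bounded right-tail of a right-isolated expansion rooted at the left-neighbouring growing letter, (ii) a short junction word coming from $\sigma$-images of the bounded letters sitting between two consecutive growing letters in $\sigma^{n-1}(w)$, and (iii) the bounded left-tail of a left-isolated expansion rooted at the right-neighbouring growing letter. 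This assembly yields the form $w_1 u_1^{k_1} w_2 u_2^{k_2} w_3$. A gap bordered by a growing letter on only one side collapses to $w_1 u_1^{k_1} w_2$, and a short gap (no iterated tail is long enough to pick up a period) reduces to $w_1$.

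The main obstacle is justifying the uniform bound $L$: the transition words $w_j$ must be shown to come from a finite list of templates, independent of $n$. This uses that (a) the short junction word between two iterated tails is determined by a uniformly bounded piece of $\sigma^{n-1}(w)$ lying between the two growing letters whose expansions contribute the tails, a piece whose bounded weight is controlled by the pre-period $k_0$ of $\sigma|_B$ and by the finitely many witnesses $(n, u, v)$ of isolation; and (b) the short prefix/suffix appearing in each decomposition $w_1 u_1^{k_1} w_2$ of a tail depends only on the pre-periodic regime of $\sigma|_B$. Consolidating these finite data yields a uniform $L$ and the finite set $U$, proving the proposition.
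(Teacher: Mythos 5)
Your overall strategy is the right one, and it is essentially the route the paper itself follows — not to prove this statement directly (in the paper it is quoted from \cite{KS} without proof), but to establish the sharper \Cref{LXB}: split $\sigma^n(w)$ at its growing letters, observe that any bounded factor lies in a maximal bounded gap, and show that each gap is a right-tail plus a uniformly bounded middle plus a left-tail, with the tails eventually periodic because $\sigma$ acts eventually periodically on $B^*$. Your treatment of the tails (eventual periodicity of $\sigma^{kn}(u)$, giving the shape $w_1u_1^{k_1}w_2$ up to bounded prefix/suffix) is sound, modulo the minor point that it must be carried out for \emph{all} growing letters via the cycles of $G_L$, $G_R$ and a bounded pre-period, not only for the isolated ones.

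The genuine gap is in the junction step. You claim that a gap bordered by two growing letters is obtained ``by tracing back one step to $\sigma^{n-1}(w)$'' and that the middle junction ``is determined by a uniformly bounded piece of $\sigma^{n-1}(w)$ lying between the two growing letters whose expansions contribute the tails.'' That is false as stated: in a pushy system the maximal bounded gaps of $\sigma^{n-1}(w)$ are themselves of unbounded length (this is the whole phenomenon being analysed), so the piece of $\sigma^{n-1}(w)$ between two consecutive growing letters is not uniformly bounded, its $\sigma$-image is not a ``short junction word,'' and one step of desubstitution yields no uniform constant $L$. What is needed is to unroll the desubstitution through \emph{all} levels, down to the first level at which the two bordering growing letters are separated by bounded letters inside the image of a single letter (or inside $w$ itself); only then does the gap read $RB\left(\sigma^l(a)\right)\,\sigma^l(v)\,LB\left(\sigma^l(b)\right)$ with $v \in B^*$ a factor of some $\sigma(c)$ or of $w$, so that $\sigma^l(v)$ is uniformly bounded and all unbounded growth is carried by the two tails. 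This induction back to an ``origin'' is precisely the content of the paper's 1-block machinery (\Cref{origin} and \Cref{maxfac}, which feed into \Cref{LXB}); without it, the finiteness of the set of junction words, hence the existence of $L$ and $U$, is asserted but not proved.
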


A similar result can also be found for purely morphic words in \cite[Proposition 4.7.62]{CANT}. We are going to be more precise by explicitly defining the words $u_1$ and $u_2$ as the computable words $LP(c)$ and $RP(c)$ for specific growing letters $c$. To achieve that, we define the following.

\begin{definition}
	If $c \in C$ and $k \geq 1$, we say that $v$ is a \textit{maximal bounded factor of $\sigma^k(c)$} if $v \in \mathscr L\left(\sigma^k(c)\right) \cap B^*$ and it is not a factor of any other word of $\mathscr L\left(\sigma^k(c)\right) \cap B^*$. We also say that a word is a \textit{maximal bounded factor of $\sigma$} if it is a maximal bounded factor of a $\sigma^k(c)$ for some $c \in C$ and $k \geq 0$.
\end{definition}

The purpose of maximal bounded factors is that they contain the structure of every word of $\mathscr L(X_\sigma) \cap B^*$, as highlighted by the following remark.

\begin{remark}
	As only growing letters generate images of arbitrarily large length, for all $u \in \mathscr L(X_\sigma) \cap B^*$, there exists $c \in C$ and $k \geq 0$ such that $u \sqsubset \sigma^k(c)$. In particular, $u$ is a factor of a maximal bounded factor of $\sigma^k(c)$.
\end{remark}

\subsubsection{1-blocks}

We adapt here the notion of 1-blocks, introduced by Devyatov in \cite{Devyatov} for morphic words, to substitution subshifts.

\begin{definition}
	If $u \in A^*$ contains a growing letter, we denote the first growing letter in $u$ by $LC(u)$ and the prefix before $LC(u)$ by $LB(u) \in B^*$. Symmetrically, we denote the last growing letter in $u$ by $RC(u)$ and the suffix after $RC(u)$ by $RB(u) \in B^*$.
\end{definition}

\newcommand{\exo}{
   \begin{gathered}
    \tikzpicture[every node/.style={anchor=south west}]
        \node[minimum width=0.5cm,minimum height=0.5cm] at (0.1,0) {22};
        \node[minimum width=0.5cm,minimum height=0.5cm] at (0.75,0) {0};
        \node[minimum width=0.5cm,minimum height=0.5cm] at (1.25,0) {1200};
        \node[minimum width=0.5cm,minimum height=0.5cm] at (2.25,0) {1};
        \node[minimum width=0.5cm,minimum height=0.5cm] at (2.75,0) {$\varepsilon$};
        \draw (0,0) -- (3.25,0);
        \draw (0,0) -- (0,0.5);
        \draw (0.75,0) -- (0.75,0.5);
        \draw (1.25,0) -- (1.25,0.5);
        \draw (2.25,0) -- (2.25,0.5);
        \draw (2.75,0) -- (2.75,0.5);
        \draw (3.25,0) -- (3.25,0.5);
        \draw (0,0.5) -- (3.25,0.5);
        \draw [decorate, decoration = {brace}] (0.75,0) --  (0,0)
			node[pos=0.5,below=2pt,black]{\scriptsize LB};
		\draw [decorate, decoration = {brace}] (1.25,0) --  (0.75,0)
			node[pos=0.5,below=2pt,black]{\scriptsize LC};
		\draw [decorate, decoration = {brace}] (2.75,0) --  (2.25,0)
			node[pos=0.5,below=2pt,black]{\scriptsize RC};
			\draw [decorate, decoration = {brace}] (3.25,0) --  (2.75,0)
			node[pos=0.5,below=2pt,black]{\scriptsize RB};
    \endtikzpicture
    \end{gathered}
}

\newcommand{\ext}{
   \begin{gathered}
    \tikzpicture[every node/.style={anchor=south west}]
        \node[minimum width=0.5cm,minimum height=0.5cm] at (0,0) {2};
        \node[minimum width=0.5cm,minimum height=0.5cm] at (1,0) {0};
        \node[minimum width=0.5cm,minimum height=0.5cm] at (2,0) {2};
        \draw (0,0) -- (2.5,0);
        \draw (0,0) -- (0,0.5);
        \draw (0.5,0) -- (0.5,0.5);
        \draw (2,0) -- (2,0.5);
        \draw (2.5,0) -- (2.5,0.5);
        \draw (0,0.5) -- (2.5,0.5);
        \draw [decorate, decoration = {brace}] (0.5,0) --  (0,0)
			node[pos=0.5,below=2pt,black]{\scriptsize LB};
		\draw [decorate, decoration = {brace}] (2,0) --  (0.5,0)
			node[pos=0.5,below=2pt,black]{\scriptsize LC=RC};
		\draw [decorate, decoration = {brace}] (2.5,0) --  (2,0)
			node[pos=0.5,below=2pt,black][fontscale=0.1]{\scriptsize RB};
    \endtikzpicture
    \end{gathered}
}

\begin{example}
	If $C=\{0,1\}$ and $B = \{2\}$, the words 22012001 and 202 have the following decomposition:
	\begin{center}
		$\exo$ and $\ext$.
	\end{center}
\end{example}

\begin{definition}
	If $c \in C$ and $k \geq 1$, a \textit{1-block of $\sigma^k(c)$} is a triplet $(a, u ,b) \in C \times B^* \times C$ such that $aub \sqsubset \sigma^k(c)$. In general, we say that $(a, u ,b)$ is a \textit{1-block of $\sigma$} if it is a 1-block of a $\sigma^k(c)$ for some $c \in C$ and $k \geq 1$.
\end{definition}

	 We now note an important fact.

\begin{remark}\label{maxf}
	Let $c \in C$ and $k \geq 1$. The maximal bounded factors of $\sigma^k(c)$ have one of the following forms:
	
	• $LB \left (\sigma^k(c) \right)$,
	
	• $u$ where $(a,u,b)$ is a 1-block of $\sigma^k(c)$,
	
	• $RB \left (\sigma^k(c) \right )$.
\end{remark}

Note that a 1-block of $\sigma$ can be a 1-block of several $\sigma^k(c)$, but we need to consider a special case.

\begin{definition}
	A 1-block of $\sigma(c)$ for some $c \in C$ is called an \textit{origin of $\sigma$}. In particular, $\sigma$ has a finite number of origins.
\end{definition}

We now show that every 1-block of $\sigma$ can be recovered from an origin of $\sigma$.

\begin{definition}
	Let $\mathfrak{u} = (a, u, b)$ be a 1-block of $\sigma^k(c)$ for some $c \in C$ and $k \geq 1$. As $aub \sqsubset \sigma^k(c)$, we have $\sigma(a)~\sigma(u)~\sigma(b)~\sqsubset \sigma^{k+1}(c)$, and in particular
	\begin{equation*}
		\mathfrak{v} = \left(~RC\left(\sigma(a)\right)~,~RB\left(\sigma(a)\right)~\sigma(u)~LB\left(\sigma(b)\right)~,~LC\left(\sigma(b)\right)~\right)
	\end{equation*}
	is a 1-block of $\sigma^{k+1}(c)$. We call $\mathfrak{v}$ the \textit{descendant} of $\mathfrak{u}$ and we write $\mathfrak{v} = \mathfrak{D}(\mathfrak{u})$. For $l \geq 0$, we also write $\mathfrak{D}^l$ the $l$-th iteration of $\mathfrak{D}$, in particular $\mathfrak{D}^0(\mathfrak{u}) = \mathfrak{u}$.
\end{definition}

\newcommand{\Co}{
   \begin{gathered}
    \tikzpicture[every node/.style={anchor=south west}]
        \node[minimum width=0.5cm,minimum height=0.5cm] at (1.875,0) {$...$};
        \node[minimum width=0.5cm,minimum height=0.5cm] at (2.75,0) {$a$};
        \node[minimum width=0.5cm,minimum height=0.5cm] at (3.5,0) {$u$};
        \node[minimum width=0.5cm,minimum height=0.5cm] at (4.25,0) {$b$};
        \node[minimum width=0.5cm,minimum height=0.5cm] at (5.125,0) {$...$};
    	\node[minimum width=0.5cm,minimum height=0.5cm] at (2.75,1.2) {$...$};
        \node[minimum width=0.5cm,minimum height=0.5cm] at (3.5,1.2) {$a'$};
        \node[minimum width=0.5cm,minimum height=0.5cm] at (4.25,1.2) {$...$};
        \draw (1.5,0) -- (6,0);
        \draw (1.5,0) -- (1.5,0.5);
        \draw [decorate, decoration = {brace}] (1.5,0) --  (1.5,0.5)
			node[pos=0.5,left=2pt,black]{$\sigma^{k+1}(c)$};
        \draw (2.75,0) -- (2.75,0.5);
        \draw (3.25,0) -- (3.25,0.5);
        \draw (4.25,0) -- (4.25,0.5);
        \draw (4.75,0) -- (4.75,0.5);
        \draw (6,0) -- (6,0.5);
        \draw (1.5,0.5) -- (6,0.5);
        \draw[->] (2.5,1.2) -- (1.5,0.5);
        \draw[->] (3.5,1.2) -- (2.5,0.5);
        \draw[->] (4,1.2) -- (5,0.5);
        \draw[->] (5,1.2) -- (6,0.5);
        \draw (2.5,1.2) -- (5,1.2);
        \draw (2.5,1.2) -- (2.5,1.7);
        \draw [decorate, decoration = {brace}] (2.5,1.2) --  (2.5,1.7)
			node[pos=0.5,left=2pt,black]{$\sigma^k(c)$};
        \draw (3.5,1.2) -- (3.5,1.7);
        \draw (4,1.2) -- (4,1.7);
        \draw (5,1.2) -- (5,1.7);
        \draw (2.5,1.7) -- (5,1.7);
    \endtikzpicture
    \end{gathered}
}

\newcommand{\Ct}{
   \begin{gathered}
    \tikzpicture[every node/.style={anchor=south west}]
        \node[minimum width=0.5cm,minimum height=0.5cm] at (1.875,0) {$...$};
        \node[minimum width=0.5cm,minimum height=0.5cm] at (2.75,0) {$a$};
        \node[minimum width=0.5cm,minimum height=0.5cm] at (3.75,0) {$u$};
        \node[minimum width=0.5cm,minimum height=0.5cm] at (4.75,0) {$b$};
        \node[minimum width=0.5cm,minimum height=0.5cm] at (5.625,0) {$...$};
    	\node[minimum width=0.5cm,minimum height=0.5cm] at (2.25,1.2) {$...$};
        \node[minimum width=0.5cm,minimum height=0.5cm] at (3,1.2) {$a'$};
        \node[minimum width=0.5cm,minimum height=0.5cm] at (3.75,1.2) {$u'$};
        \node[minimum width=0.5cm,minimum height=0.5cm] at (4.5,1.2) {$b'$};
        \node[minimum width=0.5cm,minimum height=0.5cm] at (5.25,1.2) {$...$};
        \draw (1.5,0) -- (6.5,0);
        \draw (1.5,0) -- (1.5,0.5);
        \draw [decorate, decoration = {brace}] (1.5,0) --  (1.5,0.5)
			node[pos=0.5,left=2pt,black]{$\sigma^{k+1}(c)$};
        \draw (2.75,0) -- (2.75,0.5);
        \draw (3.25,0) -- (3.25,0.5);
        \draw (4.75,0) -- (4.75,0.5);
        \draw (5.25,0) -- (5.25,0.5);
        \draw (6.5,0) -- (6.5,0.5);
        \draw (1.5,0.5) -- (6.5,0.5);
        \draw[->] (2,1.2) -- (1.5,0.5);
        \draw[->] (3,1.2) -- (2.5,0.5);
        \draw[->] (3.5,1.2) -- (3.5,0.5);
        \draw[->] (4.5,1.2) -- (4.5,0.5);
        \draw[->] (5,1.2) -- (5.5,0.5);
        \draw[->] (6,1.2) -- (6.5,0.5);
        \draw (2,1.2) -- (6,1.2);
        \draw (2,1.2) -- (2,1.7);
        \draw [decorate, decoration = {brace}] (2,1.2) --  (2,1.7)
			node[pos=0.5,left=2pt,black]{$\sigma^k(c)$};
        \draw (3,1.2) -- (3,1.7);
        \draw (3.5,1.2) -- (3.5,1.7);
        \draw (4.5,1.2) -- (4.5,1.7);
        \draw (5,1.2) -- (5,1.7);
        \draw (6,1.2) -- (6,1.7);
        \draw (2,1.7) -- (6,1.7);
    \endtikzpicture
    \end{gathered}
}

\begin{lemma}\label{origin}
	Let $c \in C$. For all $k \geq 1$ and all 1-block $\mathfrak{u}$ of $\sigma^k(c)$, there exist an origin $\mathfrak{v}$ of $\sigma$ and $0 \leq l < k$ such that $\mathfrak{u} = \mathfrak{D}^l(\mathfrak{v})$.
\end{lemma}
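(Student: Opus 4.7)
The plan is to prove this by induction on $k$, using the fact that every growing letter in an image must come from a growing letter, since $\sigma(B) \subset B^+$.

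For the base case $k=1$, a 1-block $\mathfrak{u}$ of $\sigma(c)$ is by definition an origin, so we take $\mathfrak{v} = \mathfrak{u}$ and $l = 0$. For the inductive step, given a 1-block $\mathfrak{u} = (a, u, b)$ of $\sigma^k(c)$ with $k \geq 2$, I would factor $\sigma^k(c) = \sigma(e_1) \sigma(e_2) \cdots \sigma(e_m)$, where $e_1 e_2 \cdots e_m = \sigma^{k-1}(c)$, and locate the occurrence of $aub$ within this factorization. Let $e_i$ be the letter whose image contains the chosen occurrence of $a$, and $e_j$ the letter whose image contains $b$, with $i \leq j$. Since $a, b \in C$ and $\sigma(B) \subset B^+$, we must have $e_i, e_j \in C$.

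The argument then splits into two cases. If $i = j$, then $aub$ lies entirely within $\sigma(e_i)$, so $(a, u, b)$ is a 1-block of $\sigma(e_i)$ with $e_i \in C$, hence an origin, and we are done by taking $\mathfrak{v} = \mathfrak{u}$, $l = 0$. If $i < j$, then the segment of $\sigma^k(c)$ strictly between the positions of $a$ and $b$ consists only of bounded letters (since it is contained in $u$); this forces $e_{i+1}, \ldots, e_{j-1} \in B$, and it also forces $a = RC(\sigma(e_i))$ and $b = LC(\sigma(e_j))$ since $u \in B^*$ truncates $\sigma(e_i)$ at its last growing letter and $\sigma(e_j)$ at its first. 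Setting $w = e_{i+1} \cdots e_{j-1} \in B^*$, the triplet $\mathfrak{u}' = (e_i, w, e_j)$ is a 1-block of $\sigma^{k-1}(c)$, and by the definition of $\mathfrak{D}$ we get
\begin{equation*}
	\mathfrak{D}(\mathfrak{u}') = \bigl(RC(\sigma(e_i)),\, RB(\sigma(e_i))\, \sigma(w)\, LB(\sigma(e_j)),\, LC(\sigma(e_j))\bigr) = (a, u, b) = \mathfrak{u}.
\end{equation*}
Applying the induction hypothesis to $\mathfrak{u}'$ yields an origin $\mathfrak{v}$ and some $0 \leq l' < k-1$ with $\mathfrak{u}' = \mathfrak{D}^{l'}(\mathfrak{v})$, and then $\mathfrak{u} = \mathfrak{D}^{l'+1}(\mathfrak{v})$ with $0 \leq l'+1 < k$.

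I do not expect any serious obstacle; the main point to be careful about is the case distinction based on whether $a$ and $b$ come from the same parent letter or not, together with the verification that the intermediate parent letters in the second case are all bounded, which is what turns $(e_i, w, e_j)$ into a genuine 1-block of $\sigma^{k-1}(c)$ and makes its descendant coincide with $\mathfrak{u}$.
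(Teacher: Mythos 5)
Your proof is correct and follows essentially the same route as the paper: induction on $k$, with the same case split according to whether $a$ and $b$ come from the same parent growing letter (making $\mathfrak{u}$ itself an origin) or from two distinct parent letters $e_i, e_j$, in which case $(e_i, e_{i+1}\cdots e_{j-1}, e_j)$ is a 1-block of the previous level whose descendant is $\mathfrak{u}$. Your write-up is, if anything, slightly more explicit than the paper's in checking that the intermediate letters are bounded and that $a = RC(\sigma(e_i))$, $b = LC(\sigma(e_j))$.
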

\begin{proof}
	Let us prove it by induction on $k$. If $k=1$, $\mathfrak{u}$ is itself an origin of $\sigma$ and $\mathfrak{u} = \mathfrak{D}^0(\mathfrak{u})$. Let $k \geq 1$ be such that, for every $c \in C$ and every 1-block $\mathfrak{u}$ of $\sigma^k(c)$, there exist an origin $\mathfrak{v}$ of $\sigma$ and $0 \leq l < k$ such that $\mathfrak{u} = \mathfrak{D}^l(\mathfrak{v})$. Let $c \in C$ and let $\mathfrak{u} = (a,u,b)$ be a 1-block of $\sigma^{k+1}(c)$. When desubstituting once, the growing letters $a$ and $b$ come from growing letters in $\sigma^k(c)$, and we have two possibilities:
	 
	• Either $a$ and $b$ come from the same growing letter $a' \sqsubset \sigma^k(c)$, which means that $a u b \sqsubset \sigma(a') \sqsubset \sigma^{k+1}(c)$. This is illustrated by \Cref*{fig1}:
	
	\begin{figure}[H]
		\begin{equation*}
			\Co
		\end{equation*}
		\caption{$a u b \sqsubset \sigma(a') \sqsubset \sigma^{k+1}(c)$.}
		\label{fig1}
	\end{figure}
	
	Then $aub \sqsubset \sigma(a')$ means that $\mathfrak{u}$ is itself an origin.
	
	• Or $a$ and $b$ come from two different growing letters $a', b' \sqsubset \sigma^k(c)$, which means that there exists $u' \in A^*$ such that $a' u' b' \sqsubset \sigma^k(c)$ and $aub \sqsubset \sigma^{k+1}(c)$. This is illustrated by \Cref*{fig2}:
	\begin{figure}[H]
		\begin{equation*}
			\Ct
		\end{equation*}
		\caption{$aub \sqsubset \sigma(a'u'b') \sqsubset \sigma^{k+1}(c)$.}
		\label{fig2}
	\end{figure}
	
	As $\sigma(u') \sqsubset u \in B^*$, we have $u' \in B^*$. In particular, $\mathfrak{u'} = (a',u',b')$ is a 1-block of $\sigma^k(c)$ so, by hypothesis, there exists an origin $\mathfrak{v}$ and $0 \leq l < k$ such that $\mathfrak{u'} = \mathfrak{D}^l(\mathfrak{v})$. We have $a = RC(\sigma(a'))$ and $b = LC(\sigma(b'))$ so $u = RB(\sigma(a'))~\sigma(u')~LB(\sigma(b'))$, therefore $\mathfrak{u} = \mathfrak{D}(\mathfrak{u'}) = \mathfrak{D}^{l+1}(\mathfrak{v})$.
\end{proof}

This leads to the following definition.

\begin{definition}
	An \textit{evolution of 1-blocks of $\sigma$} is a sequence $\mathscr E = \left( \mathscr E_l \right)_{l \geq 0}$ where $\mathscr E_0$ is an origin of $\sigma$ and, for all $l \geq 1$, $\mathscr E_l = \mathfrak{D}^l(\mathscr E_0)$. In particular, each evolution is determined by its origin, $\sigma$ has a finite number of evolutions of 1-blocks and \Cref{origin} means that every 1-block of $\sigma$ belongs to an evolution.
\end{definition}

\subsubsection{Decomposition of 1-blocks}

In order to study the structure of 1-blocks, let us start with a useful lemma.

\begin{lemma}\label{LBRBphi}
	Let $u$ be a word containing a growing letter. Then
	\begin{align*}
		LB\left(\sigma(u)\right) = &~\sigma(LB(u))~LB \left(\sigma(LC(u))\right), \\
		RB\left(\sigma(u)\right) = &~RB \left(\sigma(RC(u))\right)~\sigma(RB(u)).
	\end{align*}
\end{lemma}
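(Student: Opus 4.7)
The plan is a direct computation from the definitions of $LB$, $LC$, $RB$, $RC$, exploiting that $\sigma(B^*) \subset B^*$ (since bounded letters produce only bounded letters). I will prove the two identities separately; by symmetry they are essentially the same argument, so the main idea is in the $LB$ case.

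First I would decompose $u$ according to its first growing letter: since $u$ contains at least one letter of $C$, we can write uniquely
\[
u = LB(u)\, LC(u)\, u'
\]
for some $u' \in A^*$, with $LB(u) \in B^*$ and $LC(u) \in C$. Applying the morphism gives
\[
\sigma(u) = \sigma(LB(u))\, \sigma(LC(u))\, \sigma(u').
\]
Then I would decompose $\sigma(LC(u))$ by its own first growing letter (which exists because $LC(u) \in C$ implies $\sigma(LC(u))$ contains a growing letter):
\[
\sigma(LC(u)) = LB(\sigma(LC(u)))\, LC(\sigma(LC(u)))\, w
\]
for some $w \in A^*$, with $LB(\sigma(LC(u))) \in B^*$.

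Combining these, $\sigma(u)$ has the form
\[
\underbrace{\sigma(LB(u))\, LB(\sigma(LC(u)))}_{\in\, B^*}\ \ LC(\sigma(LC(u)))\ \ w\, \sigma(u'),
\]
where the concatenation of the two bracketed pieces is in $B^*$ because $LB(u) \in B^*$ forces $\sigma(LB(u)) \in B^*$. Hence the first growing letter of $\sigma(u)$ appears exactly at the position of $LC(\sigma(LC(u)))$, and the prefix before it is $\sigma(LB(u))\, LB(\sigma(LC(u)))$. This is precisely the claimed identity for $LB(\sigma(u))$.

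The identity for $RB(\sigma(u))$ is obtained by the mirror-image decomposition $u = u''\, RC(u)\, RB(u)$ with $RB(u) \in B^*$, then writing $\sigma(RC(u)) = w'\, RC(\sigma(RC(u)))\, RB(\sigma(RC(u)))$ and observing that $RB(\sigma(RC(u)))\, \sigma(RB(u)) \in B^*$ is the suffix of $\sigma(u)$ after its last growing letter. No obstacle is expected; the only point to keep track of is that $\sigma$ preserves $B^*$, so that the two $B^*$-pieces concatenate without introducing a spurious growing letter between them.
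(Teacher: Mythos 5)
Your proof is correct and is essentially the same argument as the paper's (which compresses it into one line): decompose $u$ as $LB(u)\,LC(u)\cdots$, use that $\sigma(LB(u)) \in B^*$ and that $\sigma(LC(u))$ contains a growing letter, and read off the prefix before the first growing letter of $\sigma(u)$. Your write-up just spells out the details the paper leaves implicit, and the mirror argument for $RB$ matches as well.
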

\begin{proof}
	We have $LB\left(\sigma(u)\right) = LB \left(\sigma(LB(u)) ~ \sigma (LC(u))...\right) = \sigma(LB(u)) ~ LB \left(\sigma(LC(u))\right)$. The right case is symmetric.
\end{proof}

We deduce a decomposition of 1-blocks.

\begin{lemma}\label{El}
	Let $\mathscr E$ be an evolution of 1-blocks of $\sigma$ and set $\mathscr E_0 = (a,u,b)$. For all $l \geq 0$, we have
    	\begin{equation*}
        	\mathscr E_l = \left(RC(\sigma^l(a))~,~RB(\sigma^l(a)) ~ \sigma^l(u) ~LB(\sigma^l(b))~,~LC(\sigma^l(b)) \right).
    	\end{equation*}
\end{lemma}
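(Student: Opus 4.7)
The plan is to proceed by induction on $l$. For the base case $l = 0$, since $a$ and $b$ are growing letters we have $RC(a) = a$, $LC(b) = b$ and $RB(a) = LB(b) = \varepsilon$, so the right-hand side reduces to $(a, u, b) = \mathscr E_0$, as required.

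For the inductive step, assume the formula holds at rank $l$ and write $\mathscr E_l = (a_l, u_l, b_l)$ with $a_l = RC(\sigma^l(a))$, $u_l = RB(\sigma^l(a))\,\sigma^l(u)\,LB(\sigma^l(b))$, and $b_l = LC(\sigma^l(b))$. By definition of the descendant operator,
\begin{equation*}
\mathscr E_{l+1} = \mathfrak{D}(\mathscr E_l) = \bigl(RC(\sigma(a_l)),\; RB(\sigma(a_l))\,\sigma(u_l)\,LB(\sigma(b_l)),\; LC(\sigma(b_l))\bigr).
\end{equation*}
I would handle the three coordinates separately. For the first, applying \Cref{LBRBphi} to $\sigma^l(a)$ gives $RB(\sigma^{l+1}(a)) = RB(\sigma(RC(\sigma^l(a))))\,\sigma(RB(\sigma^l(a)))$, and inspecting the position of the rightmost growing letter yields $RC(\sigma^{l+1}(a)) = RC(\sigma(a_l))$. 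Symmetrically, $LC(\sigma(b_l)) = LC(\sigma^{l+1}(b))$, handling the third coordinate.

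The middle coordinate is where the telescoping happens. Using the morphism property, $\sigma(u_l) = \sigma(RB(\sigma^l(a)))\,\sigma^{l+1}(u)\,\sigma(LB(\sigma^l(b)))$. Then the identity from \Cref{LBRBphi} on the right side reads $RB(\sigma^{l+1}(a)) = RB(\sigma(a_l))\,\sigma(RB(\sigma^l(a)))$, and on the left side $LB(\sigma^{l+1}(b)) = \sigma(LB(\sigma^l(b)))\,LB(\sigma(b_l))$. Substituting, the middle coordinate of $\mathscr E_{l+1}$ becomes
\begin{equation*}
RB(\sigma(a_l))\,\sigma(RB(\sigma^l(a)))\,\sigma^{l+1}(u)\,\sigma(LB(\sigma^l(b)))\,LB(\sigma(b_l)) = RB(\sigma^{l+1}(a))\,\sigma^{l+1}(u)\,LB(\sigma^{l+1}(b)),
\end{equation*}
which closes the induction.

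The main obstacle is purely bookkeeping: one must carefully apply \Cref{LBRBphi} to $\sigma^l(a)$ and $\sigma^l(b)$ (rather than just to letters) in order to factor $RB(\sigma^{l+1}(a))$ and $LB(\sigma^{l+1}(b))$ in the right direction so that the freshly produced pieces $\sigma(RB(\sigma^l(a)))$ and $\sigma(LB(\sigma^l(b)))$ that appear when distributing $\sigma$ over $u_l$ are exactly the factors that need to be merged. Once the symmetric left/right roles are identified, the computation is a one-line telescoping.
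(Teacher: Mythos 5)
Your proof is correct and follows essentially the same route as the paper: induction on $l$, with the inductive step unfolding the descendant operator $\mathfrak{D}$ and applying \Cref{LBRBphi} to $\sigma^l(a)$ and $\sigma^l(b)$ to merge $RB(\sigma(a_l))\,\sigma(RB(\sigma^l(a)))$ into $RB(\sigma^{l+1}(a))$ and symmetrically on the left. The only difference is that you spell out the base case and the $RC$/$LC$ identities, which the paper treats as immediate.
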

\begin{proof}
	Let us prove it by induction on $l$. If $l = 0$, this is directly true. Let $l \geq 0$ such that $\mathscr E_l = \left(RC(\sigma^l(a))~,~RB(\sigma^l(a)) ~ \sigma^l(u) ~LB(\sigma^l(b))~,~LC(\sigma^l(b)) \right)$. If $\mathscr E_{l+1} = (a', u', b')$, we have $\mathscr E_{l+1} = \mathfrak{D}(\mathscr E_l)$ so $a' = RC(\sigma(RC(\sigma^l(a)))) = RC(\sigma^{l+1}(a))$,
	\begin{align*}
		u' = & ~ RB(\sigma(RC(\sigma^l(a)))) ~ \sigma(RB(\sigma^l(a))) ~ \sigma^{l+1}(u) ~ \sigma(LB(\sigma^l(b))) ~ LB(\sigma(LC(\sigma^l(b)))) \\
		= & ~ RB(\sigma^{l+1}(a)) ~ \sigma^{l+1}(u) ~ LB(\sigma^{l+1}(b)) \textrm{ with \Cref{LBRBphi}},
	\end{align*}
	
	and $b' = LC(\sigma(LC(\sigma^l(b)))) = LC(\sigma^{l+1}(b))$.
\end{proof}

This lemma allows us to refine \Cref{maxf}:

\begin{remark}\label{maxfac}
	Let $c \in C$ and $k \geq 1$. The maximal bounded factors of $\sigma^k(c)$ have one of the following forms:
	
	(i) $LB\left(\sigma^k(c)\right)$,
	
	(ii) $RB\left(\sigma^l(a)\right)~\sigma^l(u)~LB\left(\sigma^l(b)\right)$ where $0 \leq l<k$ and $(a,u,b)$ is an origin of $\sigma$,
	
	(iii) $RB\left(\sigma^k(c)\right)$.
\end{remark}

Now let us decompose the words $LB\left(\sigma^k(c)\right)$ and $RB \left(\sigma^k(c)\right)$ for $c \in C$ and $k \geq 0$.

\subsubsection{Decomposition of $LB\left(\sigma^k(c)\right)$}

Decomposing the words $LB\left(\sigma^k(c)\right)$ is strongly related to the prefix automaton of the substitution, but we take here a different approach. To begin with, the following lemma provides a decomposition of $LB \left(\sigma^k(c)\right)$ into $k$ parts, where we heavily use the notation introduced in \Cref{concat}.

\begin{lemma}\label{LBphik}
	Let $c \in C$. Then, for all $k \geq 1$,
	\begin{equation*}
		LB\left(\sigma^k(c)\right) = \displaystyle \bigsqcup_{j=0}^{k-1} \sigma^{k-1-j}\left(LB(\sigma(LC(\sigma^j(c))))\right).
	\end{equation*}
\end{lemma}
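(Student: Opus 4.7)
The plan is to proceed by induction on $k$, using Lemma \ref{LBRBphi} to push $\sigma$ inside the $LB$ operator at each step. The formula is really just a way of collecting, layer by layer, the bounded prefixes contributed at each iteration of $\sigma$ starting from $c$, so this should be a routine induction.

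For the base case $k=1$, note that $c \in C$ is itself a growing letter, so $LC(c) = c$; the right-hand side reduces to the single term (with $j=0$) $\sigma^{0}(LB(\sigma(LC(c)))) = LB(\sigma(c))$, which matches the left-hand side.

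For the inductive step, assume the formula for some $k \geq 1$. Writing $\sigma^{k+1}(c) = \sigma(\sigma^k(c))$ and applying Lemma \ref{LBRBphi} to $u := \sigma^k(c)$ (which contains a growing letter since $c \in C$), I would obtain
\begin{equation*}
	LB(\sigma^{k+1}(c)) = \sigma\bigl(LB(\sigma^k(c))\bigr) \cdot LB\bigl(\sigma(LC(\sigma^k(c)))\bigr).
\end{equation*}
Substituting the inductive hypothesis into the first factor and using that $\sigma$ is a morphism to distribute it over the concatenation yields
\begin{equation*}
	\sigma\bigl(LB(\sigma^k(c))\bigr) = \bigsqcup_{j=0}^{k-1} \sigma^{k-j}\bigl(LB(\sigma(LC(\sigma^j(c))))\bigr).
\end{equation*}
The remaining term $LB(\sigma(LC(\sigma^k(c))))$ is exactly the $j=k$ term $\sigma^{(k+1)-1-k}\bigl(LB(\sigma(LC(\sigma^k(c))))\bigr)$ in the desired sum, so appending it gives the formula at level $k+1$.

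There is no real obstacle; the only thing to keep track of is that $\sigma^j(c)$ always contains at least one growing letter (since $c \in C$), which ensures that $LC(\sigma^j(c))$ is well defined for every $0 \leq j \leq k$, and hence that each term of the concatenation makes sense.
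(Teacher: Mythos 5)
Your proposal is correct and follows essentially the same route as the paper: induction on $k$, with \Cref{LBRBphi} applied to $\sigma^{k+1}(c)=\sigma(\sigma^k(c))$ and the morphism property used to distribute $\sigma$ over the concatenation, the extra factor being the $j=k$ term. Your explicit treatment of the base case (using $LC(c)=c$ for $c\in C$) and the remark that $\sigma^j(c)$ always contains a growing letter are details the paper leaves implicit, but the argument is the same.
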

\begin{proof}
	Let us prove it by induction on $k$. If $k = 1$, this is directly true.
	
	Let $k \geq 1$ be such that $LB \left(\sigma^k(c)\right) = \displaystyle \bigsqcup_{j=0}^{k-1} \sigma^{k-1-j}\left(LB(\sigma(LC(\sigma^j(c))))\right)$. Then 
	\begin{align*}
		LB \left(\sigma^{k+1}(c) \right) = & ~ \sigma\left(LB(\sigma^k(c))\right) ~ LB\left(\sigma(LC(\sigma^k(c)))\right) \textrm{ with \Cref{LBRBphi}} \\
		= & ~ \displaystyle \bigsqcup_{j=0}^{k-1} \sigma^{k-j}\left(LB(\sigma(LC(\sigma^j(c))))\right) ~.~ LB\left(\sigma(LC(\sigma^k(c)))\right) \\
		= & ~ \displaystyle \bigsqcup_{j=0}^k \sigma^{k-j}\left(LB(\sigma(LC(\sigma^j(c))))\right).
	\end{align*}
\end{proof}

Now we are going to describe the periodic structure of this decomposition. First, let us introduce the directed graph $G_L$, which corresponds to the directed graph $UL_G$ in \cite[Section 3.3]{KS}.

\begin{definition}\label{GL}
	In order to represent how $\sigma$ acts on the leftmost letter of $\sigma^k(c)$ for $c \in C$, we define the directed graph $G_L := \left(V_L, E_L\right)$ by
	
	• $V_L = C$,
	
	• $E_L := \left\{\left(c, LC(\sigma(c))\right) ~\middle|~ c \in C\right\}$.
	
	If $(a,b) \in E_L$, we write $a \xrightarrow[]{L} b$. Note that every vertex of $G_L$ has out degree 1.
\end{definition}

\begin{definition}
	We say that a collection $\mathscr C = (c_i)_{0 \leq i \leq p-1} \in C^p$ is a \textit{$p$-cycle of $G_L$} if the $c_i$ are all distinct and, for all $i \in \llbracket 0, p-1 \rrbracket$, $c_i \xrightarrow[]{L} c_{i+1[p]}$ where $i+1[p]$ denotes $i+1~mod~p$. If $c \in C$ belongs to a $p$-cycle of $G_L$, we say that $c$ is \textit{left-$p$-periodic}.
\end{definition}

\begin{example}\label{exGL}
	Following \Cref{ex0}, the graph $G_L$ is the following:
	\begin{center}
    	\begin{tikzpicture} [node distance = 2cm, on grid, auto]
        	\node (q0) [state] {0};
        	\node (q1) [state, right = of q0] {1};
        	\node (q2) [state, right = of q1] {2};
        	\node (q3) [state, right = of q2] {3};
        	\path [-stealth, thick]
            (q0) edge [bend left] node {L} (q1)
            (q1) edge [bend left] node {L} (q0)
            (q2) edge [loop right]node {L} (q2)
            (q3) edge [loop right]node {L} (q3);
		\end{tikzpicture}
	\end{center}
	The letters 0 and 1 are left-2-periodic, and the letters 2 and 3 are left-1-periodic.
\end{example}

\begin{definition}\label{Lc}
	If $\mathscr C = (c_i)_{0 \leq i \leq p-1}$ is a $p$-cycle of $G_L$, for all $i \in \llbracket 0,p-1 \rrbracket$, we define the word
	\begin{equation*}
		L(c_i) := \displaystyle \bigsqcup_{j=0}^{p-1} \sigma^{p-1-j}\left(LB(\sigma(c_{i+j[p]}))\right).
	\end{equation*}
\end{definition}

\begin{remark}\label{isocycle}
	Let $\mathscr C$ be a $p$-cycle of $G_L$. For all $c \in \mathscr C$, \Cref{LBphik} provides $L(c) = LB\left(\sigma^p(c)\right)$. As $p$ is the first integer such that $LC(\sigma^p(c)) = c$, this means that $L(c)$ is the smallest bounded word generated to the left of $c$: there exists $v \in A^*$ such that $\sigma^p(c) = L(c)~cv$. In particular, $c \in C_{liso}$ if and only if $L(c) \neq \varepsilon$.
	
	Also, by construction, $L(c) \neq \varepsilon$ if and only if there exists $c' \in \mathscr C$ such that $LB(\sigma(c')) \neq \varepsilon$. This means that, in a cycle of $G_L$, either every letter is left-isolated or no letter is left-isolated.
\end{remark}

\begin{definition}
	Let $\mathscr C = \{c_i\}_{0 \leq i \leq p-1}$ be a $p$-cycle of $G_L$. For all $i \in \llbracket 0,p-1 \rrbracket$, $L(c_i) \in B^*$ so the sequence $\left( \sigma^{pj} (L(c_i)) \right)_{j \geq 0}$ is eventually periodic. Set $q_{\mathscr C} \geq 0$ and $p_{\mathscr C} \geq 1$ the first integers such that, for all $i \in \llbracket 0,p-1 \rrbracket$, $\sigma^{p(q_{\mathscr C}+p_{\mathscr C})}\left(L(c_i)\right) = $ $\sigma^{pq_{\mathscr C}}\left(L(c_i)\right)$. We then define the part before the left period
	\begin{equation*}
		LQ(c) := \displaystyle \bigsqcup_{j=0}^{q_{\mathscr C}-1} \sigma^{p(q_{\mathscr C}-1-j)}(L(c)),
	\end{equation*}
	and the left period
	\begin{equation}\label{LP}
		LP(c) := \displaystyle \bigsqcup_{j=0}^{p_{\mathscr C}-1} \sigma^{p(q_{\mathscr C}+p_{\mathscr C}-1-j)}(L(c)).
	\end{equation}
	In particular, $c \in C_{liso}$ if and only if $LP(c) \neq \varepsilon$.
\end{definition}

\begin{example}\label{exLP}
	Following \Cref{exGL}, we compute $LP$ of the left-periodic letters.
	
	• In the 2-cycle of $G_L$ $\{0,1\}$, $\sigma\left(LB(\sigma(0))\right) = \sigma\left(LB(\sigma(1))\right) = \varepsilon$ so, by \Cref{isocycle}, we have $L(0) = L(1) = \varepsilon$. This means that 0 and 1 are not left-isolated, and $LP(0) = LP(1) = \varepsilon$.

	• In the 1-cycle of $G_L$ $\{2\}$, $\sigma\left(LB(\sigma(2))\right) = \varepsilon$ so, similarly, $2 \notin C_{liso}$ and $LP(2) = \varepsilon$.
	
	• In the 1-cycle of $G_L$ $\mathscr C = \{3\}$, $\sigma\left(LB(\sigma(3))\right) = 54$. We have $L(3) = \sigma\left(LB(\sigma(3))\right) = 54$, and $q_{\mathscr C}=1$ and $p_{\mathscr C}=2$ because $\sigma^{1+2}(54) = 65 = \sigma^1(54)$. Then $3 \in C_{liso}$ and $LP(3) = \sigma^2(54)~\sigma(54) = 5665$.
\end{example}

We must also take into account the fact that not every growing letter is left-periodic.

\begin{definition}
	If $c \in C$, we write $r_c \geq 0$ the first integer such that $LC\left(\sigma^{r_c}(c)\right)$ is a left-periodic letter. Note that $r_c = 0$ if $c$ is itself left-periodic.
	
	 Let $\mathscr C = \{c_i\}_{0 \leq i \leq p-1}$ be the $p$-cycle of $G_L$ such that $c_0 = LC\left(\sigma^{r_c}(c)\right)$. For $k \geq r_c+pq_{\mathscr C}$, there exists unique integers $0 \leq i \leq p-1$, $l \geq 0$ and $0 \leq l' \leq p_{\mathscr C}-1$ such that $k = r_c+i+p(q_{\mathscr C}+lp_{\mathscr C}+l')$. We then define the word
	\begin{equation*}
		LE_k(c) := \displaystyle \bigsqcup_{j=0}^{r_c+i+pl'-1} \sigma^{k-1-j}\left(LB(\sigma(LC(\sigma^j(c))))\right). 
	\end{equation*}
	Note that the words $LE_k(c)$ have bounded length because $r_c+i+pl'-1$ is bounded and $LB(\sigma(LC(\sigma^j(c)))) \in B^*$ so the words $\sigma^{k-1-j} \left(LB(\sigma(LC(\sigma^j(c))))\right)$ have bounded length.
\end{definition}

We defined the words $LE$, $LP$ and $LQ$ in order to obtain the desired decomposition.

\begin{proposition}\label{LELPLQ}
	Let $c \in C$ and let $\mathscr C = \{c_i\}_{0 \leq i \leq p-1}$ be the $p$-cycle of $G_L$ such that $c_0 = LC\left(\sigma^{r_c}(c)\right)$. For all $k = r_c+i+p(q_{\mathscr C}+lp_{\mathscr C}+l') \geq r_c+pq_{\mathscr C}$, we have
	\begin{equation*}
		LB\left(\sigma^k(c)\right) = LE_k(c) ~ LP(c_i)^l ~ LQ(c_i).
	\end{equation*}
\end{proposition}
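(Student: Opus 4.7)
The plan is to apply Lemma \ref{LBphik} to rewrite $LB(\sigma^k(c))$ as a concatenation of $k$ terms indexed by $j \in \{0, \dots, k-1\}$, then split this concatenation at a carefully chosen index $j_0$ so that the prefix is exactly $LE_k(c)$ and the suffix can be identified with $LB(\sigma^{pM}(c_i))$ for $M = q_\mathscr{C} + l p_\mathscr{C}$. The proposition then reduces to an analysis of the latter on the cycle $\mathscr{C}$.

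First I would establish an intermediate identity: for any $M \geq 0$ and $c' \in \mathscr{C}$,
\[
LB(\sigma^{pM}(c')) = \bigsqcup_{m=0}^{M-1} \sigma^{p(M-1-m)}(L(c')).
\]
This follows from Lemma \ref{LBphik} applied to $\sigma^{pM}(c')$ by writing each $j \in \{0, \dots, pM-1\}$ as $j = pm + s$ with $0 \leq s \leq p-1$, factoring $\sigma^{p(M-1-m)}$ from each block of $p$ consecutive indices, and recognizing $\bigsqcup_{s=0}^{p-1} \sigma^{p-1-s}(LB(\sigma(c'_{(i+s) \bmod p}))) = L(c')$ via Definition \ref{Lc}.

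Next, for $k = r_c + i + p(q_\mathscr{C} + lp_\mathscr{C} + l')$, I would apply Lemma \ref{LBphik} and split at $j_0 := r_c + i + pl'$. The prefix $\bigsqcup_{j=0}^{j_0 - 1} \sigma^{k-1-j}(LB(\sigma(LC(\sigma^j(c)))))$ is exactly $LE_k(c)$ by definition. For the suffix, reparametrize $j = j_0 + h$ with $0 \leq h \leq pM - 1$ where $M := q_\mathscr{C} + lp_\mathscr{C}$. Since $j \geq r_c$, I have $LC(\sigma^j(c)) = c_{(j - r_c) \bmod p} = c_{(i + h) \bmod p} = LC(\sigma^h(c_i))$, and $k - 1 - j = pM - 1 - h$. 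Therefore the suffix equals $\bigsqcup_{h=0}^{pM - 1} \sigma^{pM - 1 - h}(LB(\sigma(LC(\sigma^h(c_i))))) = LB(\sigma^{pM}(c_i))$ by Lemma \ref{LBphik} again.

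Finally, I would apply the intermediate identity with $c' = c_i$ and split the resulting concatenation at $m = lp_\mathscr{C}$. The tail $\bigsqcup_{m = lp_\mathscr{C}}^{M-1} \sigma^{p(M-1-m)}(L(c_i))$ has exponents decreasing from $p(q_\mathscr{C} - 1)$ to $0$, so after reindexing it is exactly $LQ(c_i)$. The head $\bigsqcup_{m = 0}^{lp_\mathscr{C} - 1} \sigma^{p(M - 1 - m)}(L(c_i))$ is reparametrized as $m = s p_\mathscr{C} + j'$ with $0 \leq s \leq l - 1$ and $0 \leq j' \leq p_\mathscr{C} - 1$; the exponent becomes $p(q_\mathscr{C} + (l - 1 - s)p_\mathscr{C} + (p_\mathscr{C} - 1 - j'))$, and the periodicity $\sigma^{p(q_\mathscr{C} + p_\mathscr{C})}(L(c_i)) = \sigma^{pq_\mathscr{C}}(L(c_i))$ reduces this to $\sigma^{p(q_\mathscr{C} + p_\mathscr{C} - 1 - j')}(L(c_i))$ independently of $s$. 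Each $s$-slice thus produces a copy of $LP(c_i)$, yielding $LP(c_i)^l$. Concatenating $LE_k(c)$, $LP(c_i)^l$, and $LQ(c_i)$ gives the claimed decomposition.

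The main obstacle is purely the bookkeeping: the concatenation order makes $\sigma$-exponents \emph{decrease} left to right, so the reparametrizations $m \mapsto s p_\mathscr{C} + j'$ and $j' \mapsto p_\mathscr{C} - 1 - j'$ need to be performed in the right direction to match the definition \eqref{LP} of $LP(c_i)$, and one must check that the periodicity bound $(l - 1 - s) \geq 0$ holds throughout so that the reduction modulo $p_\mathscr{C}$ is legitimate.
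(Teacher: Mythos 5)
Your proposal is correct and follows essentially the same route as the paper's proof: apply Lemma \ref{LBphik}, split off $LE_k(c)$ at index $r_c+i+pl'$, use the cycle observation $LC(\sigma^{r_c+j}(c))=c_{j \bmod p}$ to regroup the remaining terms into blocks of $p$ giving $\sigma$-images of $L(c_i)$, and then use the eventual periodicity of $\left(\sigma^{pj}(L(c_i))\right)_j$ to collapse the head into $LP(c_i)^l$ and recognize the tail as $LQ(c_i)$. Your intermediate identity $LB(\sigma^{pM}(c_i))=\bigsqcup_{m=0}^{M-1}\sigma^{p(M-1-m)}(L(c_i))$ is just a repackaging of the same computation the paper performs inline, so there is nothing substantively different to flag.
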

\begin{proof}
	We first observe that, for all $j \geq 0$, $LC\left(\sigma^{r_c+j}(c)\right) = c_{j[p]}$. Then we have
	\begin{align*}
		LB\left(\sigma^k(c)\right) = & \displaystyle \bigsqcup_{j=0}^{r_c+i+pl'-1} \sigma^{k-1-j}\left(LB(\sigma(LC(\sigma^j(c))))\right) . \bigsqcup_{j=r_c+i+pl'}^{k-1} \sigma^{k-1-j}\left(LB(\sigma(LC(\sigma^j(c))))\right) \textrm{ by \Cref{LBphik}} \\
		= & ~ LE_k(c)~ \bigsqcup_{j=0}^{p(q_{\mathscr C}+lp_{\mathscr C})-1} \sigma^{p(q_{\mathscr C}+lp_{\mathscr C})-1-j}\left(LB(\sigma(LC(\sigma^{j+r_c+i+pl'}(c))))\right) \\
		= & ~ LE_k(c)~ \bigsqcup_{j=0}^{p(q_{\mathscr C}+lp_{\mathscr C})-1} \sigma^{p(q_{\mathscr C}+lp_{\mathscr C})-1-j}\left(LB(\sigma(c_{j+i[p]}))\right) \textrm{ with our initial observation} \\
		= & ~ LE_k(c)~ \bigsqcup_{j=0}^{q_{\mathscr C}+lp_{\mathscr C}-1} \bigsqcup_{j'=0}^{p-1} \sigma^{p(q_{\mathscr C}+lp_{\mathscr C})-1-pj-j'}\left(LB(\sigma(c_{j'+i[p]}))\right) \\
		= & ~ LE_k(c)~ \bigsqcup_{j=0}^{q_{\mathscr C}+lp_{\mathscr C}-1} \sigma^{p(q_{\mathscr C}+lp_{\mathscr C}-1-j)}\left(\bigsqcup_{j'=0}^{p-1} \sigma^{p-1-j'}\left(LB(\sigma(c_{j'+i[p]}))\right)\right) \\
		= & ~ LE_k(c)~ \bigsqcup_{j=0}^{q_{\mathscr C}+lp_{\mathscr C}-1} \sigma^{p(q_{\mathscr C}+lp_{\mathscr C}-1-j)}\left(L(c_i)\right) \\
		= & ~ LE_k(c)~ \bigsqcup_{j=0}^{lp_{\mathscr C}-1} \sigma^{p(q_{\mathscr C}+lp_{\mathscr C}-1-j)}\left(L(c_i)\right) . \bigsqcup_{j=lp_{\mathscr C}}^{q_{\mathscr C}+lp_{\mathscr C}-1} \sigma^{p(q_{\mathscr C}+lp_{\mathscr C}-1-j)}\left(L(c_i)\right) \\
		= & ~ LE_k(c)~ \bigsqcup_{j=0}^{l-1} \bigsqcup_{j'=0}^{p_{\mathscr C}-1} \sigma^{p(q_{\mathscr C}+lp_{\mathscr C}-1-p_{\mathscr C}j-j')}\left(L(c_i)\right) . \bigsqcup_{j=0}^{q_{\mathscr C}} \sigma^{p(q_{\mathscr C}-1-j)}\left(L(c_i)\right) \\
		= & ~ LE_k(c)~ \bigsqcup_{j=0}^{l-1} \bigsqcup_{j'=0}^{p_{\mathscr C}-1} \sigma^{p(q_{\mathscr C}+p_{\mathscr C}-1-j')}\left(L(c_i)\right) .~LQ(c_i) \\
		= & ~ LE_k(c)~ \bigsqcup_{j=0}^{l-1} LP(c_i)~.~LQ(c_i) \\
		= & ~ LE_k(c)~LP(c_i)^l~LQ(c_i).
	\end{align*}
\end{proof}

\begin{corollary}\label{uLPv}
	Let $\sigma : A \rightarrow A^+$ be a substitution. There exists a finite subset $Q_L \subset B^* \times B^*$ for which, for every $c \in C$ and every $k \geq 0$, there exist a left-periodic letter $a \in C$, $l \geq 0$ and $(u_1,u_2) \in Q_L$ such that
	\begin{equation*}
		LB\left(\sigma^k(c)\right) = u_1 ~ LP(a)^l ~ u_2.
	\end{equation*}
\end{corollary}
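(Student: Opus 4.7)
The plan is to derive this corollary directly from Proposition \ref{LELPLQ} by collecting the decompositions it provides and handling separately the finitely many exceptional indices $k$ where it does not apply. For every $c \in C$, Proposition \ref{LELPLQ} handles all $k \geq r_c + p q_{\mathscr{C}}$ (where $\mathscr{C}$ denotes the $p$-cycle of $G_L$ hit by $c$) by giving
\[
    LB\left(\sigma^k(c)\right) = LE_k(c) \cdot LP(c_i)^l \cdot LQ(c_i),
\]
in which $c_i \in \mathscr{C}$ is left-periodic by definition. Setting $u_1 := LE_k(c)$, $a := c_i$, and $u_2 := LQ(c_i)$ already yields the desired form; only finiteness of the resulting pairs $(u_1,u_2)$ needs to be checked.

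The finiteness argument has two ingredients. First, since $C$ is finite, there are only finitely many cycles in $G_L$, hence only finitely many letters $c_i$ can arise, and in turn only finitely many words $LQ(c_i)$. Second, as noted immediately after the definition of $LE_k(c)$ in the paper, each $LE_k(c)$ lies in $B^*$ and has length bounded by a constant depending only on $\sigma$ (the number of factors being bounded by $r_c + p \cdot p_{\mathscr{C}}$, and each factor $\sigma^{k-1-j}(LB(\sigma(LC(\sigma^j(c)))))$ being the image under $\sigma$ of a bounded word, hence of uniformly bounded length). Therefore the collection of all such $LE_k(c)$ is finite.

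It remains to cover the indices $k < r_c + p q_{\mathscr{C}}$. Here there are only finitely many pairs $(c,k)$ to treat, since the thresholds $r_c + p q_{\mathscr{C}}$ depend only on $c \in C$. For each such pair, we simply set $u_1 := LB(\sigma^k(c))$, $l := 0$, $u_2 := \varepsilon$, and let $a$ be any fixed left-periodic letter (at least one exists whenever $C \neq \emptyset$, since each vertex of $G_L$ has out-degree $1$ and $C$ is finite, forcing a cycle; if $C = \emptyset$ the statement is vacuous). Taking $Q_L$ to be the union of the finitely many pairs $(u_1,u_2)$ produced in the two regimes concludes the proof.

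The argument is almost entirely bookkeeping once Proposition \ref{LELPLQ} is in hand; the only mildly delicate point is the uniform length bound on $LE_k(c)$, but this is already established in the paper when $LE_k(c)$ is introduced, so no new estimate is required.
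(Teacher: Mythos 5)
Your proof is correct and follows essentially the same route as the paper: apply \Cref{LELPLQ} for $k \geq r_c + pq_{\mathscr C}$ with $u_1 = LE_k(c)$, $u_2 = LQ(c_i)$, and dispose of the finitely many remaining pairs $(c,k)$ directly, with finiteness of $Q_L$ coming from the bounded length of the $LE_k(c)$ and the finiteness of $C$. The only (immaterial) difference is that in the small-$k$ regime you place $LB(\sigma^k(c))$ in $u_1$ with $u_2 = \varepsilon$, whereas the paper puts it in $u_2$.
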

\begin{proof}
	Let $c \in C$ and let $\mathscr C = \{c_i\}_{0 \leq i \leq p-1}$ be the $p$-cycle of $G_L$ such that $c_0 = LC\left(\sigma^{r_c}(c)\right)$. If $k \geq r_c+pq_{\mathscr C}$, with \Cref{LELPLQ} we can write $LB\left(\sigma^k(c)\right) = u_1 ~ LP(c_i)^{l} ~ u_2$ where $u_1 = LE_k(c)$ and $u_2 = LQ(c_i)$ for some $l$ and $i$, and in that case there is a finite number of such $u_1$ and $u_2$. If $k < r_c+pq_{\mathscr C}$, we can write $LB\left(\sigma^k(c)\right) = u_2$ and there is a finite number of such $u_2$.
\end{proof}

\subsubsection{Decomposition of $RB\left(\sigma^k(c)\right)$}

This is the exact symmetric of the decomposition of $LB\left(\sigma^k(c)\right)$, so we only give the key definitions and results.

\begin{definition}\label{GR}
	In order to represent how $\sigma$ acts on the rightmost letter of $\sigma^k(c)$ for $c \in C$, we define the directed graph $G_R := \left(V_R, E_R\right)$ by
	
	• $V_R = C$,
	
	• $E_R := \left\{\left(c, RC(\sigma(c))\right) ~\middle|~ c \in C\right\}$.
\end{definition}

We have the same notions of \textit{$p$-cycle} of $G_R$ and \textit{right-$p$-periodic} letters.

\begin{example}\label{exGR}
	Following \Cref{ex0}, the graph $G_R$ is the following:
	\begin{center}
    	\begin{tikzpicture} [node distance = 2cm, on grid, auto]
        	\node (q0) [state] {0};
        	\node (q1) [state, right = of q0] {1};
        	\node (q2) [state, right = of q1] {2};
        	\node (q3) [state, right = of q2] {3};
        	\path [-stealth, thick]
            (q0) edge [bend left] node {R} (q1)
            (q1) edge [bend left] node {R} (q0)
            (q2) edge [loop right]node {R} (q2)
            (q3) edge [loop right]node {R} (q3);
		\end{tikzpicture}
	\end{center}
	The letters $0$ and $1$ are right-2-periodic, and the letters $2$ and $3$ are right-1-periodic.
\end{example}

\begin{definition}\label{Rc}
	If $\mathscr C = (c_i)_{0 \leq i \leq p-1}$ is a $p$-cycle of $G_R$, for all $i \in \llbracket 0,p-1 \rrbracket$, we define the word
	\begin{equation*}
		R(c_i) := \displaystyle \bigsqcup_{j=0}^{p-1} \sigma^j\left(RB(\sigma(c_{i+p-1-j[p]}))\right).
	\end{equation*}
\end{definition}

\begin{definition}
	Let $\mathscr C = \{c_i\}_{0 \leq i \leq p-1}$ be a $p$-cycle of $G_R$. For all $i \in \llbracket 0,p-1 \rrbracket$, $R(c_i) \in B^*$ so the sequence $\left(\sigma^{pj} (R(c_i)) \right)_{j \geq 0}$ is eventually periodic. Set $q_{\mathscr C} \geq 0$ and $p_{\mathscr C} \geq 1$ the first integers such that, for all $i \in \llbracket 0,p-1 \rrbracket$, $\sigma^{p(q_{\mathscr C}+p_{\mathscr C})}\left(R(c_i)\right) = \sigma^{pq_{\mathscr C}}\left(R(c_i)\right)$. We then define the part before the right period
	\begin{equation*}
		RQ(c) := \displaystyle \bigsqcup_{j=0}^{q_{\mathscr C}-1} \sigma^{pj}(R(c)),
	\end{equation*}
	and the right period
	\begin{equation}\label{RP}
		RP(c) := \displaystyle \bigsqcup_{j=0}^{p_{\mathscr C}-1} \sigma^{p(q_{\mathscr C}+j)}(R(c)).
	\end{equation}
	In particular, $c \in C_{riso}$ if and only if $RP(c) \neq \varepsilon$.
\end{definition}

\begin{example}\label{exRP}
	Following \Cref{exGR}, we compute $RP$ of the right-periodic letters.
	
	• Similarly to the left side, $RP(0) = RP(1) = RP(2) = \varepsilon$.
	
	• In the 1-cycle of $G_R$ $\mathscr C = \{3\}$, $\sigma(RB(\sigma(3))) = 5$. We have $R(3) = \sigma(LB(\sigma(3))) = 5$, $q_{\mathscr C}=0$ and $p_{\mathscr C}=2$. Then $3 \in C_{riso}$ and $RP(3) = 5~\sigma(5) = 56$.
\end{example}

We must also take into account the fact that not every growing letter is right-periodic.

\begin{definition}
	If $c \in C$, we write $r_c \geq 0$ the first integer such that $RC\left(\sigma^{r_c}(c)\right)$ is a right-periodic letter. Note that $r_c = 0$ if $c$ is itself right-periodic.
	
	 Let $\mathscr C = \{c_i\}_{0 \leq i \leq p-1}$ be the $p$-cycle of $G_R$ such that $c_0 = RC\left(\sigma^{r_c}(c)\right)$. For $k \geq r_c+pq_{\mathscr C}$, there exists unique integers $0 \leq i \leq p-1$, $l \geq 0$ and $0 \leq l' \leq p_{\mathscr C}-1$ such that $k = r_c+i+p(q_{\mathscr C}+lp_{\mathscr C}+l')$. We then define the word
	\begin{equation*}
		RE_k(c) := \displaystyle \bigsqcup_{j=q_{\mathscr C}+lp_{\mathscr C}}^{k-1} \sigma^j\left(RB(\sigma(RC(\sigma^{k-1-j}(c))))\right).
	\end{equation*}
	Similarly to $LE_k(c)$, the words $RE_k(c)$ have bounded length.
\end{definition}

We finally obtain the desired decomposition.

\begin{proposition}\label{RQRPRE}
	Let $c \in C$ and let $\mathscr C = \{c_i\}_{0 \leq i \leq p-1}$ be the $p$-cycle of $G_R$ such that $c_0 = RC\left(\sigma^{r_c}(c)\right)$. For all $k = r_c+i+p(q_{\mathscr C}+lp_{\mathscr C}+l') \geq r_c+pq_{\mathscr C}$, we have
	\begin{equation*}
		RB\left(\sigma^k(c)\right) = RQ(c_i) ~ RP(c_i)^l ~ RE_k(c).
	\end{equation*}
\end{proposition}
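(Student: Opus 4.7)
The plan is to mirror the proof of Proposition~\ref{LELPLQ} step by step, adapting each computation to the right-handed setting. The statement is the exact symmetric of that proposition, so the overall backbone applies; only the direction of concatenation and the bookkeeping of indices change.

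First, I would establish a right analog of Lemma~\ref{LBphik}: by induction on $k$ using the second identity of Lemma~\ref{LBRBphi}, show that for all $k \geq 1$, $RB(\sigma^k(c)) = \bigsqcup_{j=0}^{k-1} \sigma^j\bigl(RB(\sigma(RC(\sigma^{k-1-j}(c))))\bigr)$. The induction is routine: expand $RB(\sigma^{k+1}(c)) = RB(\sigma(RC(\sigma^k(c)))) \cdot \sigma(RB(\sigma^k(c)))$ via Lemma~\ref{LBRBphi}, substitute the inductive expression for $RB(\sigma^k(c))$, and reindex via $j' = j+1$ to absorb the new leftmost piece into the sum.

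Next, I would split this concatenation into an initial periodic block (the first $p(q_{\mathscr C}+lp_{\mathscr C})$ terms) followed by the tail $RE_k(c)$. For indices $j$ in the initial block, one has $k-1-j \geq r_c + i + pl' \geq r_c$, so by definition of $r_c$ and of the cycle $\mathscr C$ one gets $RC(\sigma^{k-1-j}(c)) = c_{(k-1-j-r_c)[p]} = c_{(i-1-j)[p]}$. Writing $j = pm+t$ with $0 \leq t \leq p-1$ and $0 \leq m \leq q_{\mathscr C}+lp_{\mathscr C}-1$, the inner $p$ consecutive terms collapse to $\sigma^{pm}(R(c_i))$ by Definition~\ref{Rc} (up to the cyclic shift of subscripts $(i-1-t)[p]$ versus $(i+p-1-t)[p]$, which agree mod $p$). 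Splitting the resulting outer concatenation at $m = q_{\mathscr C}$ isolates $RQ(c_i)$ as the prefix, and the remaining $l p_{\mathscr C}$ terms $\bigsqcup_{m=q_{\mathscr C}}^{q_{\mathscr C}+lp_{\mathscr C}-1} \sigma^{pm}(R(c_i))$ fold into $RP(c_i)^l$ using the periodicity $\sigma^{p(q_{\mathscr C}+p_{\mathscr C})}(R(c_i)) = \sigma^{pq_{\mathscr C}}(R(c_i))$, which iterated gives $\sigma^{p(q_{\mathscr C}+np_{\mathscr C}+s)}(R(c_i)) = \sigma^{p(q_{\mathscr C}+s)}(R(c_i))$ for all $n \geq 0$ and $0 \leq s \leq p_{\mathscr C}-1$. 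Concatenation yields $RQ(c_i) RP(c_i)^l RE_k(c)$.

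The main obstacle is purely notational bookkeeping. Because the recurrence for $RB$ appends the newest piece on the \emph{left} while the recurrence for $LB$ appends it on the \emph{right}, the correspondence between $j$ and the $\sigma$-exponent is reversed: on the left side small $j$ corresponds to large $\sigma^{k-1-j}$, while on the right side small $j$ corresponds directly to small $\sigma^j$. Consequently, the order $RQ \cdot RP^l$ appears with the period block \emph{after} the pre-period block, which is the mirror image of $LP^l \cdot LQ$ on the left; likewise the atomic subscripts become $(i-1-j)[p]$ instead of $(i+j)[p]$. No new ideas are needed beyond this careful tracking of index shifts.
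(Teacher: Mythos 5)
Your proposal is correct and takes exactly the route the paper intends: the paper omits the proof of this proposition as the "exact symmetric" of \Cref{LELPLQ}, and your argument is precisely that mirrored computation (right analog of \Cref{LBphik} via the second identity of \Cref{LBRBphi}, regrouping into blocks of $p$ terms equal to $\sigma^{pm}(R(c_i))$, then splitting at $m=q_{\mathscr C}$ and folding the middle block into $RP(c_i)^l$ by eventual periodicity). Note only that you implicitly read the lower index in the definition of $RE_k(c)$ as $p(q_{\mathscr C}+lp_{\mathscr C})$ rather than the printed $q_{\mathscr C}+lp_{\mathscr C}$; this is indeed the intended reading, since it is what makes the term counts match and keeps $RE_k(c)$ of bounded length.
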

\newpage
\begin{corollary}\label{uRPv}
	Let $\sigma : A \rightarrow A^+$ be a substitution. There exists a finite subset $Q_R \subset B^* \times B^*$ for which, for every $c \in C$ and every $k \geq 0$, there exist a right-periodic letter $a \in C$, $l \geq 0$ and $(u_1,u_2) \in Q_R$ such that
	\begin{equation*}
		RB\left(\sigma^k(c)\right) = u_1 ~ RP(a)^l ~ u_2.
	\end{equation*}
\end{corollary}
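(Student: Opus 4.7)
The plan is to mirror verbatim the argument used for \Cref{uLPv}, replacing $LB, L, LP, LQ, LE, G_L$ by their right-analogues $RB, R, RP, RQ, RE, G_R$, and using \Cref{RQRPRE} in place of \Cref{LELPLQ}. Since all the symmetric objects have already been defined earlier in the section, no new construction is needed.

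First I would fix $c \in C$ and let $\mathscr{C} = \{c_i\}_{0 \leq i \leq p-1}$ be the $p$-cycle of $G_R$ such that $c_0 = RC\left(\sigma^{r_c}(c)\right)$, with associated integers $q_{\mathscr{C}}$ and $p_{\mathscr{C}}$. I would split the argument into two cases according to whether $k$ is large enough.

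If $k \geq r_c + p q_{\mathscr{C}}$, then by \Cref{RQRPRE} we can write uniquely $k = r_c + i + p(q_{\mathscr{C}} + l p_{\mathscr{C}} + l')$ for some $0 \leq i \leq p-1$, $l \geq 0$ and $0 \leq l' \leq p_{\mathscr{C}} - 1$, and
\begin{equation*}
RB\left(\sigma^k(c)\right) = RQ(c_i) \; RP(c_i)^{l} \; RE_k(c).
\end{equation*}
Setting $a := c_i$ (a right-periodic letter), $u_1 := RQ(c_i)$ and $u_2 := RE_k(c)$ gives the required form. Only finitely many such pairs $(u_1, u_2)$ arise: the $RQ(c_i)$ depend on the finitely many right-periodic letters $c_i$, and the $RE_k(c)$ have bounded length (as noted immediately after their definition) so there are only finitely many of them. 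If instead $k < r_c + p q_{\mathscr{C}}$, then $RB\left(\sigma^k(c)\right)$ itself is one of finitely many words (indexed by $c$ and $k$), so I would take $u_1 := \varepsilon$, $l := 0$, $u_2 := RB\left(\sigma^k(c)\right)$ and $a$ any right-periodic letter.

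Finally, since $C$ is finite, taking the union over all $c \in C$ of the finite collections of pairs $(u_1, u_2)$ produced in the two cases yields a finite set $Q_R \subset B^* \times B^*$ that works uniformly. I do not expect any real obstacle here: the entire content is already packaged in \Cref{RQRPRE} and the boundedness of the words $RE_k(c)$; the corollary is just the bookkeeping that extracts a finite parameter set from these ingredients, exactly as in the proof of \Cref{uLPv}.
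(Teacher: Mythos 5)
Your proposal is correct and matches the paper's (implicit) argument: the paper proves \Cref{uLPv} by splitting on whether $k \geq r_c + pq_{\mathscr C}$, reading off the decomposition from \Cref{LELPLQ} and using the boundedness of the exceptional words, and explicitly states that the right-hand case is the exact symmetric, which is precisely what you carry out via \Cref{RQRPRE}. The bookkeeping (finitely many $RQ(c_i)$, bounded-length $RE_k(c)$, finitely many short cases) is handled just as in the paper, so there is nothing to add.
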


\subsubsection{Decomposition of maximal bounded factors}

To recapitulate, \Cref{maxfac} states that every maximal bounded factor of $\sigma$ can be decomposed with some $LB\left(\sigma^k(c)\right)$ for $c \in C$ and $k \geq 1$, some $\sigma^k(u)$ where $u$ is part of an origin of $\sigma$ and $k \geq 1$, and some $RB\left(\sigma^k(c)\right)$ for $c \in C$ and $k \geq 1$. The words $\sigma^k(u)$ are easy to understand, and with \Cref{uLPv,uRPv} we decomposed $LB\left(\sigma^k(c)\right)$ and $RB\left(\sigma^k(c)\right)$ for all $c \in C$. We now deduce a precise decomposition of the maximal bounded factors.

\begin{proposition}\label{LXB}
	Let $\sigma : A \rightarrow A^+$ be a substitution. There exists a finite subset $Q \subset B^* \times B^* \times B^*$ such that every maximal bounded factor of $\sigma$ has the form
	\begin{equation*}
		u_1 ~ RP(a)^p ~ u_2 ~ LP(b)^q ~ u_3
	\end{equation*}
	
	where $(u_1,u_2,u_3) \in Q$, $a \in C$ is right-periodic, $b \in C$ is left-periodic and $p,q \geq 0$.
\end{proposition}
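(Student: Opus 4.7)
The plan is to apply Remark \ref{maxfac} to split every maximal bounded factor into one of three forms, and then handle each form using the decompositions already obtained in Corollaries \ref{uLPv} and \ref{uRPv}. The template $u_1 \, RP(a)^p \, u_2 \, LP(b)^q \, u_3$ is flexible enough to accommodate factors that have a periodic block on only one side: in those cases the plan is to take $p=0$ or $q=0$ for some fixed choice of right- or left-periodic letter, making the corresponding $RP$ or $LP$ factor vanish.

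First, I would handle a factor of the form $LB(\sigma^k(c))$ by applying Corollary \ref{uLPv}, which yields $LB(\sigma^k(c)) = v_1 \, LP(b)^q \, v_2$ with $(v_1,v_2)$ drawn from the finite set $Q_L$; rewriting this as $v_1 \, RP(a)^0 \, \varepsilon \, LP(b)^q \, v_2$ for a fixed right-periodic letter $a$ produces triples $(u_1,u_2,u_3) = (v_1, \varepsilon, v_2)$ taken from a finite set. The case of a factor $RB(\sigma^k(c))$ is treated symmetrically using Corollary \ref{uRPv}.

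The main case concerns factors of the form $RB(\sigma^l(a)) \, \sigma^l(u) \, LB(\sigma^l(b))$ where $(a,u,b)$ ranges over the finitely many origins of $\sigma$ and $l \geq 0$. Applying Corollary \ref{uRPv} to the left piece and Corollary \ref{uLPv} to the right piece yields a decomposition
\[
v_1 \, RP(a')^p \, v_2 \, \sigma^l(u) \, w_1 \, LP(b')^q \, w_2
\]
with $(v_1,v_2) \in Q_R$, $(w_1,w_2) \in Q_L$, $a'$ right-periodic and $b'$ left-periodic. Setting $u_1 = v_1$, $u_2 = v_2 \, \sigma^l(u) \, w_1$ and $u_3 = w_2$ puts this factor in the desired form.

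The main obstacle is to verify that the central piece $u_2$ ranges over a finite set as $l$ and $(a,u,b)$ vary. Since the number of origins is finite, it suffices to show that $\{\sigma^l(u) : l \geq 0\}$ is finite for each fixed $u \in B^*$: because $\sigma(B) \subset B^+$ and every bounded letter $c$ satisfies $\sup_{l \geq 0} |\sigma^l(c)| < \infty$, the words $\sigma^l(u)$ stay in $B^*$ and have length bounded by $|u| \cdot \max_{c \in B} \sup_l |\sigma^l(c)|$, so they belong to a finite subset of $B^*$. Taking $Q$ to be the union of the finite sets of triples produced by the three cases will then complete the proof.
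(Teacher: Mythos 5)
Your proposal is correct and follows essentially the same route as the paper: split via \Cref{maxfac} into the three forms, apply \Cref{uRPv} and \Cref{uLPv} to the $RB$ and $LB$ pieces, and absorb $\sigma^l(u)$ (with $u$ bounded, hence finitely many images) into the middle word $u_2$. Your explicit justification that $\{\sigma^l(u) : l \geq 0\}$ is finite is a welcome elaboration of a point the paper passes over with only the remark that $v \in B^*$.
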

\begin{proof}
	If $u$ is a maximal bounded factor of $\sigma^k(c)$ for $c \in C$ and $k \geq 1$, \Cref{maxfac} gives three cases to consider:
	
	(i) $u = LB \left(\sigma^k(c) \right)$. By \Cref{uLPv}, there exists $(u_2, u_3) \in Q_L$, $b \in C$ left-periodic and $q \geq 0$ such that $u = u_2 ~ LP(b)^q ~ u_3$, and there is a finite number of such $u_2$ and $u_3$.
	
	(ii) $u = RB \left(\sigma^l(a) \right)~\sigma^l(v)~LB \left(\sigma^l(b) \right)$ where $0 \leq l <k$ and $(a,v,b)$ is an origin of $\sigma$. By \Cref{uRPv}, there exists $(u_1, v_1) \in Q_R$, $a \in C$ right-periodic and $p \geq 0$ such that $RB \left(\sigma^l(a) \right) = u_1 ~ RP(a)^p ~ v_1$. By \Cref{uLPv}, there exists $(v_2, u_3) \in Q_L$, $b \in C$ left-periodic and $q \geq 0$ such that $LB \left(\sigma^l(b) \right) = v_2 ~ LP(b)^q ~ u_3$. By setting $u_2 = v_1 \sigma^l(v) v_2$, we can write $u = u_1 ~ RP(a)^p ~ u_2 ~ LP(b)^q ~ u_3$, and there is a finite number of such $u_1$, $u_2$ and $u_3$ because $v \in B^*$.
	
	(iii) $u = RB \left(\sigma^k(c) \right)$. Similarly to the first case, we can write $u = u_1 ~ RP(a)^p ~ u_2$ with $a \in C$ right-periodic, and there is a finite number of such $u_1$ and $u_2$.
\end{proof}

\begin{remark}
	This proposition decomposes the maximal bounded factor of $\sigma$, which will naturally provide a decomposition of every factor of $\mathscr L(\sigma) \cap B^*$. It makes two improvements from \Cref{3forms}:
	
	(i) It explicitly defines the words that occur periodically in the decomposition.
	
	(ii) It holds not only when $\sigma$ is wild, but in the general case. As a consequence, one can recover \Cref{Lbounded}: if $\sigma$ is tame, then every $LP(c)$ and $RP(c)$ is empty so the maximal bounded factors have bounded length; if $\sigma$ is wild, then one $LP(c)$ (resp.  $RP(c)$) is non-empty so, by \Cref{LELPLQ} (resp. \Cref{RQRPRE}), the words of $\mathscr L(X_\sigma) \cap B^*$ have unbounded length.
\end{remark}

\subsection{Proof of \Cref*{main} (ii)}

We first show that the repetitions arising in \Cref{LELPLQ,RQRPRE} provide the wild minimal components exhibited in \Cref{dependance}.

\begin{proposition}\label{ntame1}
	Let $c \in C_{liso}$ (resp. $c \in C_{riso}$). Then the subshift $X({}^\omega LP(c)^\omega)$ (resp. $X({}^\omega RP(c)^\omega)$) is a wild minimal component of $X_\sigma$.
\end{proposition}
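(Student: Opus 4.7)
The plan is to verify three things: that ${}^\omega LP(c)^\omega$ lies in $X_\sigma$, that its orbit closure is minimal, and that this orbit closure lies in $B^\mathbb{Z}$. The left-isolated case will be the focus, the right-isolated case is symmetric via \Cref{RQRPRE}.

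First I would observe that the hypothesis $c \in C_{liso}$ already forces $c$ itself to be left-periodic in $G_L$: if $\sigma^n(c) = ucv$ with $u \in B^+$, then since $u$ is bounded, the leftmost growing letter of $\sigma^n(c)$ is $c$, so $LC(\sigma^n(c)) = c$ and $c$ lies in a cycle $\mathscr{C}$ of $G_L$. Hence $L(c)$ and $LP(c)$ are defined, and by \Cref{isocycle} combined with $\sigma$ being non-erasing, $LP(c) \in B^+$. I then apply \Cref{LELPLQ} to $c$ with $r_c = 0$ and $l' = 0$: for every $l \geq 0$ and $k = p(q_{\mathscr{C}} + l p_{\mathscr{C}})$,
\begin{equation*}
LB(\sigma^k(c)) = LE_k(c) \; LP(c)^l \; LQ(c),
\end{equation*}
so $LP(c)^l \sqsubset \sigma^k(c)$, which gives $LP(c)^l \in \mathscr{L}(\sigma)$ for all $l \geq 0$. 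Any finite factor of ${}^\omega LP(c)^\omega$ is a factor of some cyclic shift of $LP(c)^m$, and writing $LP(c) = w_1 w_2$ shows that any such cyclic shift $(w_2 w_1)^m$ is itself a factor of $LP(c)^{m+1}$, hence in $\mathscr{L}(\sigma)$. Therefore every finite factor of ${}^\omega LP(c)^\omega$ lies in $\mathscr{L}(\sigma)$, so ${}^\omega LP(c)^\omega \in X_\sigma$.

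Minimality of $X({}^\omega LP(c)^\omega)$ then follows immediately because the orbit closure of a periodic point under the shift is a finite set on which $T$ acts as a single cyclic permutation; every orbit is dense so the subshift is minimal. Finally, since $LP(c) \in B^+$ we have $X({}^\omega LP(c)^\omega) \subset B^\mathbb{Z}$, so this minimal component is wild. The main subtlety in this argument is the first step: one must notice that left-isolation of $c$ already implies that $c$ itself (not just $LC(\sigma^{r_c}(c))$ for some $r_c > 0$) lies in a cycle of $G_L$, so that \Cref{LELPLQ} can be applied directly to $c$ with $r_c = 0$. Once this is observed, the rest is routine bookkeeping from the definitions of $LP$, $LQ$ and $LE_k$.
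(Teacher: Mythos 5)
Your proof is correct and follows essentially the same route as the paper: apply \Cref{LELPLQ} with $r_c=0$ to get $LP(c)^l \sqsubset LB\bigl(\sigma^{p(q_{\mathscr C}+lp_{\mathscr C})}(c)\bigr)$ for all $l$, conclude ${}^\omega LP(c)^\omega \in X_\sigma$, and note that a single periodic orbit over $B$ is a wild minimal component. The only difference is that you explicitly justify the (correct) fact that $c \in C_{liso}$ forces $c$ to be left-periodic, a point the paper's proof assumes implicitly when it takes $c$ to be left-$p$-periodic.
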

\begin{proof}
	Let $c \in C_{liso}$ be a left-$p$-periodic letter. We have $r_c = 0$ so, with \Cref{LELPLQ}, for all $l \geq 0$, $LP(c)^l \sqsubset LB\left(\sigma^{p(q_{\mathscr C}+lp_{\mathscr C})}(c)\right)$. Therefore $X({}^\omega LP(c)^\omega) \subset X_\sigma$.
	
	Symmetrically, if $c \in C_{riso}$, $X({}^\omega RP(c)^\omega) \subset X_\sigma$.
\end{proof}

The converse relies on \Cref{uLPv,uRPv}, this is a more precise version of \Cref{XBZ}.

\begin{proposition}\label{ntame2}
	Let $X$ be a wild minimal component of $X_\sigma$. Then $X$ satisfies at least one of the following properties:
	
    • There exists $c \in C_{liso}$ such that $X = X({}^\omega LP(c)^\omega)$.

    • There exists $c \in C_{riso}$ such that $X = X({}^\omega RP(c)^\omega)$.
\end{proposition}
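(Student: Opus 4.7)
My plan is to start from the description of $X$ as a single periodic orbit provided by \Cref{XBZ} and to identify its period, via the decomposition in \Cref{LXB} and a Fine--Wilf argument, with one of the words $LP(b)$ or $RP(a)$.

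First, \Cref{XBZ} yields some $u \in B^+$ with $X = X({}^\omega u^\omega)$; replacing $u$ by its primitive root $u_0$, I may assume that $u$ is primitive. Since ${}^\omega u_0^\omega$ has factors of every length and all such factors lie in $\mathscr L(X_\sigma) \cap B^*$, the observation that every word of $\mathscr L(X_\sigma) \cap B^*$ is a factor of some maximal bounded factor of $\sigma$, combined with \Cref{LXB}, writes each such factor in the form $v_1\, RP(a)^p\, v_2\, LP(b)^q\, v_3$, where $(v_1, v_2, v_3)$ belongs to a finite set (slightly larger than $Q$, obtained by also accounting for partial occurrences at the two ends of the ambient maximal bounded factor), $a, b \in C$ are periodic and $p, q \geq 0$.

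Then I run a pigeonhole argument on a sequence of factors $(w_n)$ of ${}^\omega u_0^\omega$ with $|w_n| \to \infty$. Since the tuples $(v_1^{(n)}, v_2^{(n)}, v_3^{(n)}, a_n, b_n)$ arising in the decompositions of the $w_n$ vary in a finite set, I extract a subsequence along which they are constantly equal to some fixed $(v_1, v_2, v_3, a, b)$. The bounded total length of $v_1 v_2 v_3$ then forces $p_n |RP(a)| + q_n |LP(b)| \to \infty$, so after passing to a further subsequence either $p_n \to \infty$ or $q_n \to \infty$. These two cases are symmetric and yield respectively the $RP$-alternative and the $LP$-alternative; I treat only $q_n \to \infty$, which in particular guarantees $LP(b) \neq \varepsilon$ and hence $b \in C_{liso}$.

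To conclude, I apply the Fine--Wilf theorem. Let $v_0$ be the primitive root of $LP(b)$: the factors $LP(b)^{q_n}$ of ${}^\omega u_0^\omega$ contain occurrences of $v_0^{m_n}$ with $m_n \to \infty$. For $n$ sufficiently large such an occurrence is a contiguous substring of ${}^\omega u_0^\omega$ of length at least $|u_0| + |v_0|$, carrying both periods $|v_0|$ (from its internal $v_0$-structure) and $|u_0|$ (from the ambient periodic word), so by Fine--Wilf it has period $d := \gcd(|u_0|, |v_0|)$. Primitivity of $v_0$ forces $d = |v_0|$, so $|v_0|$ divides $|u_0|$; a length-$|u_0|$ window of this substring then equals $v_0^{|u_0|/|v_0|}$ and is a cyclic conjugate of $u_0$, and primitivity of $u_0$ forces the exponent to be $1$. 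Hence $u_0$ is a cyclic shift of $v_0$, so $X = X({}^\omega u_0^\omega) = X({}^\omega v_0^\omega) = X({}^\omega LP(b)^\omega)$, as required. The main obstacle is exactly this last step, where one must ensure that the long $v_0$-power lives as a single contiguous substring of ${}^\omega u_0^\omega$ so that both periods apply simultaneously, and then translate conjugacy of primitive roots into equality of the associated periodic subshifts; the decomposition and pigeonhole steps are routine bookkeeping on finite sets.
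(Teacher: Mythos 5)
Your proof is correct, and its first half is essentially the paper's own argument in contrapositive form: the paper assumes every $LP(c)$ and $RP(c)$ occurs with bounded exponent in a point $x\in X$ and uses the decomposition of \Cref{LXB} (with the same finite-set bookkeeping for the truncated end pieces) to bound the length of every factor of $x$, a contradiction; you instead extract directly, by pigeonhole over the finite data, some $LP(b)$ or $RP(a)$ occurring with unbounded exponent in $\mathscr L(X)$. Where you genuinely diverge is the finish. The paper concludes at once: unbounded powers of, say, $LP(c)$ in $\mathscr L(X)$ give $X({}^\omega LP(c)^\omega)\subset X$ by closure, and minimality of $X$ forces equality — no appeal to \Cref{XBZ} and no combinatorics on words. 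You instead use \Cref{XBZ} to present $X$ as a primitive periodic orbit and run a Fine--Wilf argument to show the primitive root of $LP(b)$ is conjugate to that period; this yields the same equality and, as a small bonus, the explicit conjugacy of primitive roots (close in spirit to the paper's later remark that $LP$ varies by cyclic shifts along a cycle of $G_L$), at the cost of machinery the minimality hypothesis renders unnecessary. One wording fix in your pigeonhole step: extract the subsequence along $p_n\lvert RP(a)\rvert\to\infty$ or $q_n\lvert LP(b)\rvert\to\infty$ rather than along $p_n\to\infty$ or $q_n\to\infty$; as written, $q_n\to\infty$ is vacuous when $LP(b)=\varepsilon$ and does not by itself give $b\in C_{liso}$, though the repair is immediate.
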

\begin{proof}
	Let $x \in X$. Suppose that, for every $c \in C_{liso}$ (resp. $c \in C_{riso}$), there exists $k_c \geq 1$ such that $LP(c)^{k_c} \nsqsubset x$ (resp. $RP(c)^{k_c} \nsqsubset x$). Set $K$ to be the maximum of all $k_c$ for $c \in C_{liso}$ and $c \in C_{riso}$, and set $l_P := \displaystyle\max_{c \textrm{ left-periodic}} \left\lvert LP(c)^{K+2} \right\rvert + \displaystyle \max_{c \textrm{ right-periodic}} \left\lvert RP(c)^{K+2} \right\rvert$. Also set $l_Q := \displaystyle\max_{(u_1,u_2,u_3) \in Q} \lvert u_1 \rvert + \lvert u_2 \rvert + \lvert u_3 \rvert$ where $Q$ is the finite set from \Cref{LXB}. Now, for any $u \sqsubset x$, we have $u \in \mathscr L(X_\sigma) \cap B^*$ so there exists a maximal bounded factor $v$ of $\sigma$ such that $u \sqsubset v$, and, by \Cref{LXB}, there exist $(v_1,v_2,v_3) \in Q$, $a \in C$ right-periodic, $b \in C$ left-periodic and $p,q \geq 0$ such that $v = v_1~RP(a)^p~v_2~LP(b)^q~v_3$. We can then write $u = u_1 u_2 u_3 u_4 u_5$ where $u_1 \sqsubset v_1$, $u_2 \sqsubset RP(a)^p$, $u_3 \sqsubset v_2$, $u_4 \sqsubset LP(b)^q$ and $u_5 \sqsubset v_3$. We have $\lvert u_1 \rvert + \lvert u_3 \rvert + \lvert u_5 \rvert \leq \lvert v_1 \rvert + \lvert v_2 \rvert + \lvert v_3 \rvert \leq l_Q$. If $a \in C_{riso}$, by definition of $K$ we have $RP(a)^K \nsqsubset u_2$ so $\lvert u_2 \rvert < \left\lvert RP(a)^{K+2} \right\rvert$, otherwise $\lvert v_2 \rvert = 0$. Similarly, if $b \in C_{liso}$, we have $\lvert v_4 \rvert < \left\lvert LP(b)^{K+2} \right\rvert$, otherwise $\lvert v_4 \rvert = 0$. In any case, we have $\lvert u_2 \rvert + \lvert u_4 \rvert \leq l_P$, so $\lvert u \rvert \leq l_Q + l_P$. We just proved that every factor of $x$ has bounded length, which is a contradiction.

	Therefore, either there exists $c \in C_{liso}$ such that for all $k \geq 1$, $LP(c)^k \sqsubset x$, or there exists $c \in C_{riso}$ such that for all $k \geq 1$, $RP(c)^k \sqsubset x$. This means that either $X({}^\omega LP(c)^\omega) \subset X$ or $X({}^\omega RP(c)^\omega) \subset X$, and as $X$ is minimal we get the equality.
\end{proof}

\Cref{ntame1,ntame2} complete the proof of \Cref{main} (ii).

\begin{example}\label{exwcomp}
	Following \Cref{exLP,exRP}, the wild minimal components of $X_\sigma$ are $X({}^\omega (5665)^\omega)$ and $X({}^\omega (56)^\omega)$.
\end{example}

\section{Dynamics of minimal components}\label{dcomp}

We explained in \Cref{dycomp} that a substitution $\sigma$ induces a permutation $\tilde{\sigma}$ on its subshifts. In fact, one can prove that $\tilde{\sigma}$ preserves the minimal components, but we will go further by describing how $\tilde{\sigma}$ acts respectively on the tame and wild minimal components.

\subsection{Dynamics of tame minimal components}

\begin{proposition}\label{tameDynamics}
	Let $D \subset C_{niso}$ be a minimal alphabet of period $k$ and let $E$ such that $D \rightarrow E$. Then $E \subset C_{niso}$ is a minimal alphabet of period $k$ and $\tilde{\sigma}\left(X_{\sigma^k|_{D \cup B}}\right) = X_{\sigma^k|_{E \cup B}}$.
\end{proposition}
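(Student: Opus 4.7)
The plan is to treat the first conclusion by direct appeal, then use \Cref{Xperiodic} on $Y := \tilde{\sigma}(X_{\sigma^k|_{D \cup B}})$ to get a subshift containment, and finally upgrade that containment to equality via a minimality argument. That $E$ is a minimal alphabet of period $k$ is exactly \Cref{Eminimal}, so it remains to establish the subshift equality and, along the way, the inclusion $E \subset C_{niso}$.

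First I would check that $Y$ is a minimal subshift fixed by $\tilde{\sigma}^k$. Since $D \subset C_{niso}$ is $k$-periodic and minimal, $\sigma^k|_{D \cup B}$ is tame and l-primitive by \Cref{minimalprimitive}, so $X_{\sigma^k|_{D \cup B}}$ is minimal by \Cref{Shimomura}(i); a direct factor check using $\sigma^k(D \cup B) \subset (D \cup B)^+$ yields $\sigma^k(X_{\sigma^k|_{D \cup B}}) \subset X_{\sigma^k|_{D \cup B}}$, which forces $\tilde{\sigma}^k(X_{\sigma^k|_{D \cup B}}) = X_{\sigma^k|_{D \cup B}}$ by minimality, and therefore $\tilde{\sigma}^k(Y) = Y$. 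Because $\tilde{\sigma}$ is a bijection on the subshifts of $X_\sigma$ preserving strict inclusions (\Cref{permutation}), it sends minimal components to minimal components, so $Y$ itself is minimal. Next I would compute $alph_C(Y) = E$: by minimality and l-primitivity every $x \in X_{\sigma^k|_{D \cup B}}$ contains every letter of $D$, so the growing letters appearing in $\sigma(x)$ are exactly $\bigcup_{d \in D} alph_C(\sigma(d)) = E$, and this identity is preserved under shifts and closures.

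Then I would apply \Cref{Xperiodic} to $Y$: there is a smallest $k'' \geq 1$ with $\tilde{\sigma}^{k''}(Y) = Y$, and then $E$ is $k''$-periodic together with $X_{\sigma^{k''}|_{E \cup B}} \subset Y$. From $\tilde{\sigma}^k(Y)=Y$ we get $k'' \mid k$, and from the minimality of the period of $E$ (\Cref{Eminimal}) we get $k \mid k''$, hence $k'' = k$ and $X_{\sigma^k|_{E \cup B}} \subset Y$. To finish, I would deduce that $\sigma^k|_{E \cup B}$ is tame: since $Y$ is minimal with nonempty growing alphabet, the letters of $E$ occur with bounded gaps in every element of $Y$, so $\mathscr{L}(Y) \cap B^*$ has bounded length; the inclusion $X_{\sigma^k|_{E \cup B}} \subset Y$ transfers this bound to $\mathscr{L}(X_{\sigma^k|_{E \cup B}}) \cap B^*$, and \Cref{Lbounded} gives tameness of $\sigma^k|_{E \cup B}$, which translates to $E \subset C_{niso}$ (a letter is left/right-isolated under a power of the substitution iff it is so under the substitution, by iterating the decomposition $\sigma^n(c) = ucv$ with $u \in B^+$). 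With $E$ minimal and $E \subset C_{niso}$, \Cref{Shimomura}(i) gives that $X_{\sigma^k|_{E \cup B}}$ is itself minimal, and being contained in the minimal $Y$ it must coincide with $Y$.

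The part I expect to require the most care is the identification $alph_C(Y) = E$: it relies on combining the graph identity $E = \bigcup_{d \in D} alph_C(\sigma(d))$ (no new growing letter can appear) with the uniform recurrence of every $d \in D$ in elements of $X_{\sigma^k|_{D \cup B}}$ (no letter of $E$ can be missing). Once that identity is secured, the chain \Cref{Xperiodic} $\to$ \Cref{Lbounded} $\to$ \Cref{Shimomura} wraps everything up.
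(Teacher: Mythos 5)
Your argument is correct, but it takes a genuinely different route from the paper's. The paper proves $E \subset C_{niso}$ purely combinatorially: it follows the cycle of $G_L$ (resp.\ $G_R$) through a left-periodic $e \in E$ back into $D$ via $E \xrightarrow[k-1]{} D$ and invokes \Cref{isocycle} (in a cycle of $G_L$ either every letter is left-isolated or none is); it then shows the inclusion $\tilde{\sigma}\left(X_{\sigma^k|_{D \cup B}}\right) \subset X_{\sigma^k|_{E \cup B}}$ directly on languages (any factor of a point of the image lies in some $\sigma^{kl+1}(d)$ with $d \in D$, hence in $\sigma^{k(l+1)}(e)$ for some $e \in E$) and concludes by minimality of $X_{\sigma^k|_{E \cup B}}$ (\Cref{tame1}). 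You instead work with $Y := \tilde{\sigma}\left(X_{\sigma^k|_{D \cup B}}\right)$, prove that $Y$ is minimal with $alph_C(Y)=E$, obtain the \emph{reverse} inclusion $X_{\sigma^k|_{E \cup B}} \subset Y$ from \Cref{Xperiodic}, and recover $E \subset C_{niso}$ dynamically via \Cref{Lbounded} --- essentially re-running the strategy of the proof of \Cref{tame2} on the image. Both routes are valid; the paper's is more economical (it needs neither that $\tilde{\sigma}$ preserves minimality nor that every point of $X_{\sigma^k|_{D \cup B}}$ contains every letter of $D$), while yours is more conceptual and exhibits at once why $\tilde{\sigma}$ permutes the tame minimal components. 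Two of your steps deserve an explicit line rather than an appeal: (1) that $\tilde{\sigma}$ sends minimal subshifts to minimal subshifts does not follow from \Cref{permutation} merely by ``preserving strict inclusions''; one clean fix is to take $N$ with $\tilde{\sigma}^N=\mathrm{id}$ on the finite set of subshifts, so a subshift $Z \subset \tilde{\sigma}(X)$ satisfies $\tilde{\sigma}^{N-1}(Z) \subset X$, hence $\tilde{\sigma}^{N-1}(Z)=X$ by minimality and $Z=\tilde{\sigma}(X)$; and (2) the claim that every $x \in X_{\sigma^k|_{D \cup B}}$ contains every letter of $D$ requires first checking $D \subset \mathscr L\left(X_{\sigma^k|_{D \cup B}}\right)$ (e.g.\ $d \sqsubset \sigma^{kn}(d)$ by l-primitivity together with a compactness argument), since in general $\mathscr L(X_\tau)$ can be strictly smaller than $\mathscr L(\tau)$, cf.\ \Cref{ex1}. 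Both fixes are routine, so your proof stands.
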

\begin{proof}
	By \Cref{Eminimal}, $E$ is a minimal alphabet of period $k$. In particular, we have $E \xrightarrow[k-1]{} D$. If $e \in E$ is a left-periodic letter, let $\mathscr C$ be the cycle of $G_L$ such that $e \in \mathscr C$ and let $d \in \mathscr C$ be such that $e \xrightarrow[k-1]{L} d$. We have $d = LC(\sigma^{k-1}(e)) \sqsubset \sigma^{k-1}(e)$ so $d \in D$. In particular, $d \notin C_{liso}$ so, by \Cref{isocycle}, $e \notin C_{liso}$. We just proved that $E \cap C_{liso} = \emptyset$, and symmetrically we get $E \cap C_{riso} = \emptyset$, therefore $E \subset C_{niso}$.
	
	Let $x \in \tilde{\sigma}\left(X_{\sigma^k|_{D \cup B}}\right)$. For all $u \in \mathscr L(x)$, it follows from the definitions that there exist $d \in D$ and $l \geq 1$ such that $u \sqsubset \sigma^{kl+1}(d)$. We also have $E \xrightarrow[k-1]{} D$ so there exists $e \in E$ such that $d \sqsubset \sigma^{k-1}(e)$. We get $u \sqsubset \sigma^{k(l+1)}(e)$ so $u \in \mathscr L \left(X_{\sigma^k|_{E \cup B}}\right)$. We just proved that $\mathscr L(x) \subset \mathscr L \left(\sigma^k|_{E \cup B}\right)$ for all $x \in \tilde{\sigma}\left(X_{\sigma^k|_{D \cup B}}\right)$, which means that $\tilde{\sigma}\left(X_{\sigma^k|_{D \cup B}}\right) \subset X_{\sigma^k|_{E \cup B}}$. Finally, by \Cref{tame1}, $X_{\sigma^k|_{E \cup B}}$ is minimal so we have the equality.
\end{proof}
	
This allows us to define the following graph.
	
\begin{definition}\label{Gt}
	We define the directed graph $G_t = \left(V_t, E_t\right)$ by
	
	• $V_t$ the set of tame minimal components of $X_\sigma$,
	
	• $E_t := \left\{(X, Y) ~\middle|~ X,Y \in V_t, \tilde{\sigma}(X)=Y \right\}$.
\end{definition}
	
Then \Cref{tameDynamics} means that $G_t$ is in correspondence with the subgraph of $G$ restricted to the minimal alphabets included in $C_{niso}$. In particular, $\tilde{\sigma}$ induces a permutation on the tame minimal components.
	
\begin{example}
	Following \Cref{exG,extcomp}, the graph $G_t$ is the following:
	\begin{center}
    \begin{tikzpicture} [node distance = 2cm, on grid, auto]
        \node (q0) [state] {$X_{\sigma^2|_{\{0,5\}}}$};
        \node (q1) [state, right = of q0] {$X_{\sigma^2|_{\{1,4,6\}}}$};
        \node (q2) [state, right = of q1] {$X_{\sigma|_{\{2,4,5,6\}}}$};
        \path [-stealth, thick]
        	(q0) edge [bend left] node {$\tilde{\sigma}$} (q1)
            (q1) edge [bend left] node {$\tilde{\sigma}$} (q0)
            (q2) edge [loop right] node {$\tilde{\sigma}$} (q2);
	\end{tikzpicture}
	\end{center}
\end{example}

\subsection{Dynamics of wild minimal components}

\begin{proposition}\label{phiLP}
	Let $a,b \in C_{liso}$ be such that $a \xrightarrow[]{L} b$. Then $\tilde{\sigma}\left(X({}^\omega LP(a)^\omega)\right) = X({}^\omega LP(b)^\omega)$.
\end{proposition}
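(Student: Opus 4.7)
The plan is to use the fact that $\tilde{\sigma}$ permutes the subshifts of $X_\sigma$ (\Cref{permutation}) to send the minimal component $X({}^\omega LP(a)^\omega)$ to another minimal component. Since $\sigma({}^\omega LP(a)^\omega) = {}^\omega\sigma(LP(a))^\omega$ lies in $B^{\mathbb{Z}}$ and is periodic, \Cref{XBZ} identifies this image as $X({}^\omega\sigma(LP(a))^\omega)$. Both $X({}^\omega LP(a)^\omega)$ and $X({}^\omega LP(b)^\omega)$ are minimal components by \Cref{ntame1}, so the task reduces to showing that $\sigma(LP(a))$ and $LP(b)$ are cyclic shifts of each other.

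To this end, let $\mathscr C = (c_0,\ldots,c_{p-1})$ be the $p$-cycle of $G_L$ with $a=c_i$ and $b = c_{i+1[p]}$, set $r := LB(\sigma(a))$, and write $M_k := \sigma^{pk}(L(a))$, $N_k := \sigma^{pk}(L(b))$. Expanding $\sigma(L(a))$ via \Cref{Lc} and reindexing the resulting concatenation produces the base identity
\begin{equation*}
\sigma(L(a)) \cdot r \;=\; \sigma^p(r) \cdot L(b).
\end{equation*}
Applying $\sigma^{pk}$ yields $\sigma(M_k) \cdot \sigma^{pk}(r) = \sigma^{p(k+1)}(r) \cdot N_k$, and iterating telescopically across the product $LP(a) = M_{q_{\mathscr C}+p_{\mathscr C}-1}\cdots M_{q_{\mathscr C}}$ gives
\begin{equation*}
\sigma(LP(a)) \cdot \sigma^{pq_{\mathscr C}}(r) \;=\; \sigma^{p(q_{\mathscr C}+p_{\mathscr C})}(r) \cdot LP(b).
\end{equation*}

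Since $r \in B^*$, the sequence $(\sigma^{pk}(r))_{k\geq 0}$ is eventually periodic, so by replacing $q_{\mathscr C}$ with some $q' \geq q_{\mathscr C}$ inside its periodic range---a modification that only cyclically permutes the factors of $LP(a)$ and $LP(b)$ (since $M_{k+p_{\mathscr C}} = M_k$ and $N_{k+p_{\mathscr C}} = N_k$ for $k \geq q_{\mathscr C}$), and hence leaves their generated orbits unchanged---we may assume $\sigma^{p(q_{\mathscr C}+p_{\mathscr C})}(r) = \sigma^{pq_{\mathscr C}}(r) =: S$, obtaining
\begin{equation*}
\sigma(LP(a)) \cdot S \;=\; S \cdot LP(b).
\end{equation*}
Comparing lengths forces $\lvert \sigma(LP(a)) \rvert = \lvert LP(b) \rvert$, and the conjugacy equation $XS = SY$ with $\lvert X \rvert = \lvert Y \rvert$ classically forces $Y$ to be a cyclic shift of $X$ by $\lvert S \rvert \bmod \lvert X \rvert$ positions. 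Therefore ${}^\omega\sigma(LP(a))^\omega$ and ${}^\omega LP(b)^\omega$ coincide up to a shift, giving $\tilde{\sigma}(X({}^\omega LP(a)^\omega)) = X({}^\omega LP(b)^\omega)$. The main technical difficulty lies in verifying the base identity by careful bookkeeping of the concatenations defining $L(a)$ and $L(b)$, and in justifying that enlarging $q_{\mathscr C}$ only cyclically permutes the factors of the products defining $LP(a)$ and $LP(b)$.
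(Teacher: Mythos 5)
Your argument is correct in substance, but it takes a genuinely different route from the paper. The paper computes the single word $LB\left(\sigma^{p(q_{\mathscr C}+lp_{\mathscr C})+1}(c_0)\right)$ in two ways, once via \Cref{LELPLQ} at exponent $p(q_{\mathscr C}+lp_{\mathscr C})+1$ and once via \Cref{LBRBphi} followed by \Cref{LELPLQ} at exponent $p(q_{\mathscr C}+lp_{\mathscr C})$, obtaining $LE_{\cdot}(c_0)\,LP(b)^l\,LQ(b)=\sigma(LP(a))^l\,\sigma(LQ(a))\,LB(\sigma(a))$; letting $l\to\infty$ and observing that only the two powers have unbounded length gives the shift-equality of ${}^\omega\sigma(LP(a))^\omega$ and ${}^\omega LP(b)^\omega$ with no conjugacy lemma. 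You instead extract an exact one-period identity $\sigma(LP(a))\,\sigma^{pq_{\mathscr C}}(r)=\sigma^{p(q_{\mathscr C}+p_{\mathscr C})}(r)\,LP(b)$ directly from \Cref{Lc} and \eqref{LP}; your base identity $\sigma(L(a))\,r=\sigma^p(r)\,L(b)$ with $r=LB(\sigma(a))$ is indeed correct (both sides are the same concatenation after re-indexing along the cycle), as is the telescoping, and you then invoke the Lyndon--Sch\"utzenberger conjugacy lemma. This buys a sharper conclusion ($LP(b)$ is a conjugate of $\sigma(LP(a))$, in the spirit of the remark following \Cref{phiRP}), at the cost of redoing by hand, in a special case, the bookkeeping that \Cref{LELPLQ} already packages.

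The one step you must tighten is the reduction to $\sigma^{p(q'+p_{\mathscr C})}(r)=\sigma^{pq'}(r)$. Eventual periodicity of $\left(\sigma^{pk}(r)\right)_{k\geq 0}$ alone does not give a period dividing $p_{\mathscr C}$: the integers $q_{\mathscr C},p_{\mathscr C}$ are defined through the words $L(c_i)$, not through $r$. The claim is true, but needs an argument: since $\sigma$ is non-erasing and $r\in B^*$, the lengths $\lvert\sigma^{pk}(r)\rvert$ are non-decreasing and bounded, hence eventually constant; moreover $r$ is the last concatenation factor of $L(b)$ in \Cref{Lc} (the term $\sigma^{0}(LB(\sigma(c_i)))$), so for $k$ large, comparing suffixes of equal length in $\sigma^{p(k+p_{\mathscr C})}(L(b))=\sigma^{pk}(L(b))$ yields $\sigma^{p(k+p_{\mathscr C})}(r)=\sigma^{pk}(r)$; you must also take $q'$ beyond this stabilization point, not merely ``inside the periodic range''. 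Alternatively you can bypass the issue: telescoping over $l$ period-blocks gives $\sigma(LP(a))^l\,\sigma^{pq_{\mathscr C}}(r)=\sigma^{p(q_{\mathscr C}+lp_{\mathscr C})}(r)\,LP(b)^l$ with the outer words of bounded length, and letting $l\to\infty$ gives the shift-equality directly, which is essentially the paper's argument. Finally, the appeal to \Cref{XBZ} at the start is unnecessary: $\tilde{\sigma}\left(X({}^\omega u^\omega)\right)=X({}^\omega\sigma(u)^\omega)$ follows immediately from the definition of $\tilde{\sigma}$.
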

\begin{proof}
	Let $\mathscr C = \{c_i\}_{0 \leq i \leq p-1}$ be the $p$-cycle of $G_L$ such that $a=c_0$ and $b=c_{1[p]}$.
	
	On the first hand, for all $l \geq 0$, \Cref{LELPLQ} provides
	\begin{equation*}
		LB\left( \sigma^{p(q_{\mathscr C}+lp_{\mathscr C})+1}(c_0) \right) = LE_{p(q_{\mathscr C}+lp_{\mathscr C})+1}(c_0)~LP(c_{1[p]})^l~LQ(c_{1[p]}).
	\end{equation*}
	
	On the other hand, for all $l \geq 0$, we have
	\begin{align*}
		LB\left( \sigma^{p(q_{\mathscr C}+lp_{\mathscr C})+1}(c_0) \right) = & ~LB\left( \sigma(\sigma^{p(q_{\mathscr C}+lp_{\mathscr C})}(c_0))\right) \\
		= & ~\sigma\left(LB(\sigma^{p(q_{\mathscr C}+lp_{\mathscr C})}(c_0))\right) ~ LB\left(\sigma(LC(\sigma^{p(q_{\mathscr C}+lp_{\mathscr C})}(c_0)))\right) \textrm{ by \Cref{LBRBphi}} \\
		= & ~\sigma\left(LP(c_0)\right)^l ~ \sigma\left(LQ(c_0)\right) ~ LB\left(\sigma(c_0)\right) \textrm{ by \Cref{LELPLQ}}.
	\end{align*}
	
	Both expressions are equal, and only $LP(b)^l$ on the one side and $\sigma\left(LP(a)\right)^l$ on the other side have unbounded length, which means that there exists $n \in \mathbb{Z}$ such that $T^n\left(\sigma({}^\omega LP(a)^\omega)\right) = {}^\omega LP(b)^\omega$.
\end{proof}
	
With the same arguments, we have the symmetric result. 

\begin{proposition}\label{phiRP}
	Let $a,b \in C_{riso}$ such that $a \xrightarrow[]{R} b$. Then $\tilde{\sigma}\left(X({}^\omega RP(a)^\omega)\right) = X({}^\omega RP(b)^\omega)$.
\end{proposition}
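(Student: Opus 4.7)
The plan is to mirror the argument of Proposition \ref{phiLP} on the right side, using the graph $G_R$, the decomposition \Cref{RQRPRE}, and the right-hand half of \Cref{LBRBphi}. Let $\mathscr C = \{c_i\}_{0 \leq i \leq p-1}$ be the $p$-cycle of $G_R$ with $a = c_0$ and $b = c_{1[p]}$. Since $c_0$ is right-$p$-periodic we have $r_{c_0} = 0$, so \Cref{RQRPRE} applies cleanly at every multiple of $p$ shifted by any fixed $i$.

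First I would compute $RB\left(\sigma^{p(q_{\mathscr C}+lp_{\mathscr C})+1}(c_0)\right)$ in two different ways. Directly from \Cref{RQRPRE} with $i = 1$ and $l' = 0$, we get
\begin{equation*}
	RB\left(\sigma^{p(q_{\mathscr C}+lp_{\mathscr C})+1}(c_0)\right) = RQ(c_{1[p]})~RP(c_{1[p]})^l~RE_{p(q_{\mathscr C}+lp_{\mathscr C})+1}(c_0).
\end{equation*}
Alternatively, peeling off one $\sigma$ and using the right-hand identity of \Cref{LBRBphi} together with the fact that $RC\left(\sigma^{p(q_{\mathscr C}+lp_{\mathscr C})}(c_0)\right) = c_0$ (because $c_0$ is right-$p$-periodic), we get
\begin{equation*}
	RB\left(\sigma^{p(q_{\mathscr C}+lp_{\mathscr C})+1}(c_0)\right) = RB(\sigma(c_0))~\sigma\left(RB\left(\sigma^{p(q_{\mathscr C}+lp_{\mathscr C})}(c_0)\right)\right),
\end{equation*}
and applying \Cref{RQRPRE} again to the inner term (this time with $i=0$, $l'=0$) yields
\begin{equation*}
	RB\left(\sigma^{p(q_{\mathscr C}+lp_{\mathscr C})+1}(c_0)\right) = RB(\sigma(c_0))~\sigma(RQ(c_0))~\sigma(RP(c_0))^l~\sigma\left(RE_{p(q_{\mathscr C}+lp_{\mathscr C})}(c_0)\right).
\end{equation*}

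Finally I would equate the two expressions and let $l \to \infty$. On the first expression only the block $RP(b)^l$ grows unboundedly with $l$, while on the second expression only $\sigma(RP(a))^l$ does; all other factors have bounded length (the $RE$ words are uniformly bounded by construction, and $RQ$, $RB(\sigma(c_0))$ are fixed). Comparing the unbounded periodic parts on the common prefix/suffix structure, there must exist $n \in \mathbb{Z}$ such that $T^n(\sigma({}^\omega RP(a)^\omega)) = {}^\omega RP(b)^\omega$, which gives $\tilde{\sigma}(X({}^\omega RP(a)^\omega)) = X({}^\omega RP(b)^\omega)$.

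The only slightly delicate point is the last matching step: one has to be careful that the positions of the growing $RP^l$ blocks inside the two expressions are aligned so that their concatenations agree on a bi-infinite periodic word. This is the exact symmetric of the corresponding step in \Cref{phiLP}, so I expect no new difficulty beyond bookkeeping the side on which the unbounded periodic part accumulates (on the right for $LB$, on the left for $RB$).
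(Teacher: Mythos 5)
Your proof is correct and takes essentially the same route as the paper, which establishes this proposition simply by invoking the symmetry with \Cref{phiLP}: your two computations of $RB\left(\sigma^{p(q_{\mathscr C}+lp_{\mathscr C})+1}(c_0)\right)$ via \Cref{RQRPRE} and the right-hand identity of \Cref{LBRBphi}, followed by comparing the unbounded periodic blocks, are exactly the mirrored argument the paper intends. The final matching step you flag is handled at the same level of detail as in the paper's proof of \Cref{phiLP}, so no additional work is needed.
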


\begin{remark}
	In fact, a more precise fact holds: if $a \xrightarrow[]{L} b$ (resp. $a \xrightarrow[]{R} b$), then $LP(b)$ (resp. $RP(b)$) is a cyclic shift of $LP(a)$ (resp. $RP(a)$).
\end{remark}

This allows us to define the following graph.

\begin{definition}\label{Gw}
	We define the directed graph $G_w := \left(V_w, E_w\right)$ by
	
	• $V_w$ the set of wild minimal components of $X_\sigma$,
	
	• $E_w := \left\{(X, Y) ~\middle|~ X,Y \in V_w, \tilde{\sigma}(X)=Y \right\}$.
\end{definition}

Then \Cref{phiLP,phiRP} mean that $G_w$ is determined by $G_L$ and $G_R$. In particular, $\tilde{\sigma}$ induces a permutation on the wild minimal components.

\begin{example}
	Following \Cref{exGL,exGR,exwcomp}, the graph $G_w$ is the following:
	\begin{center}
    \begin{tikzpicture} [node distance = 3cm, on grid, auto]
        \node (q0) [state] {$X({}^\omega (5665)^\omega)$};
        \node (q1) [state, right = of q0] {$X({}^\omega (56)^\omega)$};
        \path [-stealth, thick]
        	(q0) edge [loop left] node {$\tilde{\sigma}$} (q0)
            (q1) edge [loop right] node {$\tilde{\sigma}$} (q1);
	\end{tikzpicture}
	\end{center}
\end{example}

\section{Counting minimal components}\label{numcomp}

\subsection{Computing $B$ and $C$}

We take inspiration in \cite[Lemmas 3.1 and 3.2]{Devyatov} to define the set of \textit{periodic} letters $P \subset B$ which is essential to characterize $B$.

\begin{definition}
	If $a \in B$ occurs in $\sigma^n(a)$ for some $n \geq 1$ and we say that $a$ is \textit{periodic}, otherwise we say that $a$ is \textit{pre-periodic}. We write $P$ the set of periodic letters and $PP$ the set of pre-periodic letters, such that $B = P \cup PP$.
\end{definition}

\begin{lemma}\label{caraB}
	Let $a \in A$. Then the following holds.
	
	(i) $a \in P$ if and only if there exists $n \geq 1$ such that $\sigma^n(a) = a$.
	
	(ii) If $a \in P$, then, for all $m \geq 1$, $\sigma^m(a) \in P$.
	
	(iii) $a \in B$ if and only if $\sigma^{\lvert A \rvert}(a) \in P^+$.
\end{lemma}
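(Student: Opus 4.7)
My plan is to prove the three parts in sequence, with (iii) building on both (i) and (ii).

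For (i), the nontrivial direction is $a \in P \Rightarrow \sigma^n(a) = a$ for some $n \geq 1$. I would write $\sigma^n(a) = u a v$ with $u,v \in B^*$ (valid because $a \in B$ and $\sigma(B) \subset B^+$), and then iterate:
\begin{equation*}
\sigma^{kn}(a) = \sigma^{(k-1)n}(u) \cdots \sigma^n(u)\, u\, a\, v\, \sigma^n(v) \cdots \sigma^{(k-1)n}(v).
\end{equation*}
Since $\sigma$ is non-erasing, $\lvert \sigma^{jn}(u) \rvert \geq \lvert u \rvert$ and $\lvert \sigma^{jn}(v) \rvert \geq \lvert v \rvert$, so $\lvert \sigma^{kn}(a) \rvert \geq 1 + k(\lvert u \rvert + \lvert v \rvert)$. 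Boundedness of $a$ forces $u = v = \varepsilon$, i.e.\ $\sigma^n(a) = a$. The converse is immediate: $\sigma^n(a) = a$ yields $a \sqsubset \sigma^n(a)$, and the iterates $\sigma^j(a)$ take only finitely many values, so $a \in B$ and hence $a \in P$.

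For (ii), I pick $n$ with $\sigma^n(a) = a$ via (i). Then $\sigma^n(\sigma^m(a)) = \sigma^{m+n}(a) = \sigma^m(\sigma^n(a)) = \sigma^m(a)$. Writing $\sigma^m(a) = b_0 \cdots b_{r-1}$, the identity $\sigma^n(b_0) \cdots \sigma^n(b_{r-1}) = b_0 \cdots b_{r-1}$ combined with $\lvert \sigma^n(b_i) \rvert \geq 1$ forces each $\lvert \sigma^n(b_i) \rvert = 1$, and a position-by-position comparison then gives $\sigma^n(b_i) = b_i$. Since $b_i \in B$ (as $\sigma^m(a) \in B^+$ by $a \in B$ and $\sigma(B) \subset B^+$), each $b_i$ is periodic, so $\sigma^m(a) \in P^+$.

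For (iii), the ``$\Leftarrow$'' direction is direct: if $\sigma^{\lvert A \rvert}(a) \in P^+$, take $N$ to be a common multiple of the periods of the (finitely many) letters appearing in $\sigma^{\lvert A \rvert}(a)$; then $\sigma^{\lvert A \rvert + k N}(a) = \sigma^{\lvert A \rvert}(a)$ for all $k \geq 0$, so $\lvert \sigma^j(a) \rvert$ stays bounded and $a \in B$. For ``$\Rightarrow$'', given any $b \sqsubset \sigma^{\lvert A \rvert}(a)$, I would build a descent chain $a = c_0, c_1, \ldots, c_{\lvert A \rvert} = b$ where $c_{i+1}$ is the letter of $\sigma(c_i)$ whose descendants under the remaining substitutions cover the chosen occurrence of $b$. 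Pigeonhole on $\lvert A \rvert + 1$ letters in an alphabet of size $\lvert A \rvert$ yields $c_i = c_j$ with $i < j$; the chain then gives $c_j \sqsubset \sigma^{j-i}(c_j)$, and since $c_j \in B$, part (i) yields $c_j \in P$. If $j = \lvert A \rvert$ we are done; otherwise part (ii) gives $\sigma^{\lvert A \rvert - j}(c_j) \in P^+$, and $b$ is one of its letters, hence $b \in P$.

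The main obstacle is the forward direction of (i): upgrading ``$a \sqsubset \sigma^n(a)$'' to ``$\sigma^n(a) = a$'' requires combining non-erasingness multiplicatively across iterations against the boundedness of $a$. Once this is in hand, (ii) reduces to a clean length-matching argument, and (iii) is a pigeonhole on descent chains in which (i) and (ii) do the real work.
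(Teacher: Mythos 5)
Your proof is correct and takes essentially the same route as the paper's: (i) by playing non-erasingness/length growth against boundedness, (ii) by commuting $\sigma^n$ and $\sigma^m$ and matching lengths, and (iii) by a pigeonhole argument on a derivation chain of $\lvert A \rvert +1$ letters (the paper runs this chain backwards through pre-periodic ancestors as a contradiction, while you run it forward and push periodicity down via (ii), which is the same idea). The only cosmetic point is that in (ii) you conclude $\sigma^m(a) \in P^+$ whereas the statement asserts $\sigma^m(a) \in P$; this is immediate since $\sigma^n(a)=a$ and non-erasingness force $\lvert \sigma^m(a) \rvert = 1$.
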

\begin{proof}
	(i) If $a$ is periodic, there exist $n \geq 1$ and $u,v \in A^*$ such that $\sigma^n(a) = uav$. If $(u,v) \neq (\varepsilon, \varepsilon)$, then, by iterating the substitution $\sigma^n$ on $a$, we would get $a \in C$, therefore $\sigma^n(a) = a$.
	
	If there exists $n \geq 1$ such that $\sigma^n(a) = a$, then, for all $m \geq 1$, $\sigma^{mn}(a) = a$ so $1 = \lvert \sigma^{mn}(a) \rvert \geq \lvert \sigma^m(a) \rvert \geq 1$ so $\lvert \sigma^m(a) \rvert = 1$. This means that $a \in B$, and it occurs in $\sigma^n(a)$ so $a \in P$.
	
	(ii) If $a \in P$, we already proved that, for all $m \geq 1$, $\lvert \sigma^m(a) \rvert = 1$. (i) provides $n \geq 1$ such that $\sigma^n(a) = a$, so $\sigma^n(\sigma^m(a)) = \sigma^m(\sigma^n(a)) = \sigma^m(a))$ and, by (i), $\sigma^m(a) \in P$.
	
	(iii) If $a \in B$, suppose that $\sigma^{\lvert A \rvert}(a) \notin P^+$. In particular, there exists a letter $a_{\lvert A \rvert} \in PP$ such that $a_{\lvert A \rvert} \sqsubset \sigma^{\lvert A \rvert}(a)$. This provides $a_{\lvert A \rvert-1} \sqsubset \sigma^{\lvert A \rvert-1}(a)$ such that $a_{\lvert A \rvert} \sqsubset \sigma\left(a_{\lvert A \rvert-1}\right)$, and, by (ii), $a_{\lvert A \rvert-1} \in PP$. By iterating this construction, we obtain a finite sequence $(a_i)_{0 \leq i \leq \lvert A \lvert} \in PP^{\lvert A \lvert+1}$ such that $a_{i+1} \sqsubset \sigma(a_i)$. We have $\lvert A \rvert +1$ letters so there exists $i < j$ such that $a_i = a_j$, and then $a_i \sqsubset \sigma^{j-i}(a_i)$ so $a_i \notin PP$, contradiction.
	
	If $\sigma^{\lvert A \rvert}(a) \in P^+$, in particular $\sigma^{\lvert A \rvert}(a) \in B^+$ so $a \in B$.
\end{proof}

We now have a method to compute $B$, as stated in \Cref{compB}.

\begin{proof}[\textbf{Proof of \Cref*{compB}}]
	We first compute the alphabet $P$: for $a \in A$, we compute the $\sigma^n(a)$ and either we reach $n$ such that $\lvert \sigma^n(a) \rvert \geq 2$, and by \Cref{caraB} (ii) $a \notin P$, or we reach $n$ for which there exists $m<n$ such that $\sigma^n(a) = \sigma^m(a) \in A$, and if $m = 0$ then, by \Cref{caraB} (ii), $a \in P$, or if $m > 0$ then $a \notin P$.
	
	We can now compute $B$: for $a \in A$, we compute $\sigma^{\lvert A \rvert}(a)$ and we check if it belongs to $P^+$ to use \Cref{caraB} (iii). The remaining letters are in $C$.
\end{proof}

\subsection{Computing $MC(\sigma)$}

Essentially, computing the minimal components is the same as counting them. We recall that $MC(\sigma)$ denotes the number of minimal components of $X_\sigma$, and we also define $TMC(\sigma)$ the number of tame minimal components and $WMC(\sigma)$ the number of wild minimal components.

\begin{proposition}\label{countt}
	Let $\sigma : A \rightarrow A^+$ be a substitution. Then $TMC(\sigma)$ is computable.
\end{proposition}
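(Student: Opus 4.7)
The plan is to apply \Cref{main}(i), which places the tame minimal components of $X_\sigma$ in bijection with pairs $(D,k)$ where $D$ is a minimal alphabet contained in $C_{niso}$ and $k$ is its period; each such pair gives the distinct component $X_{\sigma^k|_{D\cup B}}$. Distinctness follows from \Cref{compdisjoint} together with the fact that $alph_C(X_{\sigma^k|_{D\cup B}}) = D$. Thus $TMC(\sigma)$ equals the number of minimal alphabets contained in $C_{niso}$, and the task reduces to exhibiting an algorithm that enumerates them.

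First I would invoke \Cref{compB} to compute $B$ and $C$. Next, to compute $C_{niso}$, I would decide for each $c \in C$ whether $c$ is left- or right-isolated. For the left case, \Cref{LBphik} shows that $c \in C_{liso}$ iff some $a$ visited by the orbit of $c$ in the finite graph $G_L$ (\Cref{GL}) satisfies $LB(\sigma(a)) \neq \varepsilon$; since the orbit of $c$ in $G_L$ eventually cycles, this is a finite check. The symmetric procedure on $G_R$ yields $C_{riso}$, and $C_{niso} = C \setminus (C_{liso} \cup C_{riso})$.

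Finally, I would enumerate the minimal alphabets of $G$. The graph $G$ of \Cref{G} is finite, with $2^{|C|}-1$ vertices, and its edges are read directly from the generators $\{a\} \to D_a$ (\Cref{generators}). For each subalphabet $D \subset C$, I would iterate $D$ in $G$ until either $D$ reappears — giving the period $k$ — or every vertex has been visited without return, in which case $D$ is not periodic. For each periodic $D$, \Cref{puitminimal} provides a decision procedure for minimality: check that for every $a \in D$, the orbit of $\{a\}$ in $G$ reaches $D$, a finite verification since $G$ is finite. Intersecting the resulting set of minimal alphabets with $\{D \mid D \subset C_{niso}\}$ and counting yields $TMC(\sigma)$.

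I foresee no genuine obstacle: every step reduces to finitely many orbit computations in the explicit graphs $G$, $G_L$ and $G_R$, and all the structural content has already been established — the delicate part is \Cref{main}(i) itself, which identifies tame minimal components with minimal alphabets in $C_{niso}$ and makes this proposition essentially a corollary of that theorem together with \Cref{compB}.
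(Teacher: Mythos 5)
Your overall strategy is the same as the paper's: reduce $TMC(\sigma)$ to counting the minimal alphabets contained in $C_{niso}$ (via \Cref{main} (i) and \Cref{compdisjoint}), compute $B$ and $C$ by \Cref{compB}, build the finite graph $G$ and decide minimality of periodic alphabets via \Cref{puitminimal}. That part is fine. The genuine flaw is your test for $C_{liso}$ (and symmetrically $C_{riso}$). You claim that $c\in C_{liso}$ iff some letter $a$ on the $G_L$-orbit of $c$ satisfies $LB(\sigma(a))\neq\varepsilon$. What \Cref{LBphik} gives is only that this condition is equivalent to $LB(\sigma^k(c))\neq\varepsilon$ for some $k$; being left-isolated is strictly stronger, since the definition demands $\sigma^n(c)=ucv$ with $u\in B^+$, i.e., $c$ itself must recur as the \emph{first growing letter} of $\sigma^n(c)$. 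Hence only left-periodic letters can be left-isolated, and for a left-periodic $c$ the correct criterion is that some letter of the \emph{cycle} of $c$ in $G_L$ has nonempty $LB(\sigma(\cdot))$ (\Cref{isocycle}), equivalently $LP(c)\neq\varepsilon$; a nonempty $LB(\sigma(a))$ met along the pre-periodic tail of the orbit proves nothing about $c$.

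This is not a cosmetic slip: it can change the computed value. Take $\sigma: 0\mapsto b10,\ 1\mapsto 10,\ b\mapsto b$, so $C=\{0,1\}$, $B=\{b\}$. Here $LB(\sigma(0))=b\neq\varepsilon$, but $LC(\sigma^n(0))=1$ for every $n\geq 1$, so $0$ is not left-periodic and $0\notin C_{liso}$; one checks $C_{liso}=C_{riso}=\emptyset$, the unique minimal alphabet is $\{0,1\}$ (of period $1$, contained in $C_{niso}$), and $TMC(\sigma)=1$. Your test would place $0$ in $C_{liso}$, shrink $C_{niso}$ to $\{1\}$, discard the minimal alphabet $\{0,1\}$, and output $TMC(\sigma)=0$. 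The fix is exactly the paper's machinery: for each $c\in C$ first decide whether $c$ is left-periodic (its $G_L$-orbit returns to $c$), and if so test whether some letter of its cycle has nonempty $LB(\sigma(\cdot))$ --- equivalently compute $LP(c)$ and $RP(c)$ and test them for emptiness; with that correction the rest of your argument goes through and coincides with the paper's proof.
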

\begin{proof}
	Once \Cref{compB} provides $C$, we can compute $G$, the minimal alphabets and their period. By checking if the minimal alphabets contain a left or right-isolated letter, we obtain $TMC(\sigma)$.
\end{proof}
\newpage
\begin{proposition}\label{countw}
	Let $\sigma : A \rightarrow A^+$ be a substitution. Then $WMC(\sigma)$ is computable.
\end{proposition}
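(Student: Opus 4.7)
The plan is to mirror the proof of \Cref{countt}: apply \Cref{compB} to obtain $B$ and $C$, enumerate the wild minimal components of $X_\sigma$ using the explicit characterization from \Cref{main} (ii), and then reduce the resulting list to canonical form to avoid double counting.

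First I would compute the graphs $G_L$ and $G_R$, which requires only the partition $B \sqcup C$ and the values $\sigma(c)$ for $c \in C$. Since every vertex in these graphs has out-degree $1$, identifying the cycles amounts to following edges until a repetition occurs. For each letter $c$ lying on a cycle $\mathscr C$ of length $p$ in $G_L$, the word $L(c) \in B^*$ is directly computable from \Cref{Lc}, and I would then iterate $w \mapsto \sigma^p(w)$ starting from $L(c)$ to detect its eventual period. This iteration terminates because, for any $w \in B^*$, the length of $\sigma^n(w)$ is bounded by $|w| \cdot \max_{b \in B, n \geq 0} |\sigma^n(b)|$, a quantity that is finite and computable since $B$ consists of bounded letters (via \Cref{caraB}); the sequence $(\sigma^{pj}(L(c)))_{j \geq 0}$ therefore takes values in a finite subset of $B^*$ and is eventually periodic. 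Reading off $q_{\mathscr C}$ and $p_{\mathscr C}$ yields $LP(c)$ via formula (\ref{LP}), and a symmetric procedure on $G_R$ yields $RP(c)$. The sets $C_{liso}$ and $C_{riso}$ are then recovered as $\{c \in C : LP(c) \neq \varepsilon\}$ and $\{c \in C : RP(c) \neq \varepsilon\}$.

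At this stage \Cref{main} (ii) provides a finite list of candidate periodic words, each generating a wild minimal component of $X_\sigma$. It remains to detect coincidences: two single periodic orbits $X({}^\omega u^\omega)$ and $X({}^\omega v^\omega)$ are equal if and only if the primitive roots of $u$ and $v$ are cyclic shifts of one another (see \Cref{reduced}). Since both primitive roots and the cyclic-shift equivalence on finite words are straightforwardly computable, I would partition the list under this equivalence and output the number of classes as $WMC(\sigma)$. The only point that demands care is the termination of the period-detection loop, which is secured by the boundedness property of letters in $B$; everything else is routine combinatorial bookkeeping.
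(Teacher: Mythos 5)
Your proposal is correct and follows essentially the same route as the paper: compute $B$ via \Cref{compB}, build $G_L$ and $G_R$, extract $LP(c)$ and $RP(c)$, and remove duplicates by comparing primitive roots up to cyclic shift. The extra justification you give for termination of the period-detection loop (boundedness of images of letters in $B$) is a detail the paper leaves implicit, but it does not change the argument.
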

\begin{proof}
	Once \Cref{compB} provides $B$, we can compute $G_L$, $G_R$, $LP(c)$ and $RP(c)$ for every left or right-periodic letter. In order to differenciate the $X({}^\omega LP(c)^\omega)$ and $X({}^\omega RP(c)^\omega)$, we compute the primitive root of the words $LP(c)$ and $RP(c)$ and we check if one is a cyclic shift of another to remove duplicates.
\end{proof}

\Cref{countt,countw} complete the proof of \Cref{count}. For details about the implementation of these algorithms, the reader can take a look at the attached Python code.

\subsection{Bounding $MC(\sigma)$}

Let us first bound $TMC(\sigma)$ and $WMC(\sigma)$ separately.

\begin{lemma}\label{borne}
	Let $\sigma : A \rightarrow A^+$ be a substitution. Then
	
	(i) $TMC(\sigma) \leq \lvert C_{niso} \rvert$,
	
	(ii) $WMC(\sigma) \leq \lvert C_{liso} \rvert +  \lvert C_{riso} \rvert$.
\end{lemma}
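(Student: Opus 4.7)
The plan is to prove the two bounds separately, invoking the characterization of minimal components from \Cref{main} together with the disjointness result in \Cref{compdisjoint}.

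For part (i), I would argue via the bijection between tame minimal components and minimal alphabets contained in $C_{niso}$. By \Cref{main} (i), every tame minimal component has the form $X_{\sigma^k|_{D\cup B}}$ where $D \subset C_{niso}$ is a minimal alphabet of period $k$. Conversely, by \Cref{mainsub} and \Cref{tame1}, every minimal alphabet $D \subset C_{niso}$ gives rise to such a component. Distinct minimal alphabets give distinct components because $alph_C\!\left(X_{\sigma^k|_{D\cup B}}\right) = D$, so the alphabet can be recovered from the component. Therefore $TMC(\sigma)$ equals the number of minimal alphabets contained in $C_{niso}$. By \Cref{compdisjoint} these alphabets are pairwise disjoint, and each is non-empty, so there cannot be more than $\lvert C_{niso} \rvert$ of them.

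For part (ii), the argument is essentially a counting one using \Cref{main} (ii). Every wild minimal component is either of the form $X({}^\omega LP(c)^\omega)$ for some $c \in C_{liso}$, or of the form $X({}^\omega RP(c)^\omega)$ for some $c \in C_{riso}$. Each letter of $C_{liso}$ contributes at most one component of the first type and each letter of $C_{riso}$ contributes at most one of the second type, hence $WMC(\sigma) \leq \lvert C_{liso} \rvert + \lvert C_{riso} \rvert$.

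No serious obstacle arises: both parts follow almost immediately once \Cref{main}, \Cref{mainsub} and \Cref{compdisjoint} are in hand. The only point that requires a brief justification is that in (i) the correspondence between minimal alphabets in $C_{niso}$ and tame minimal components is injective, which follows from the observation $alph_C\!\left(X_{\sigma^k|_{D\cup B}}\right) = D$. The bound in (ii) is not sharp letter-by-letter (the same component may come from several letters in the same cycle of $G_L$ or $G_R$, as already noted in the remark after \Cref{phiLP,phiRP}), but this only strengthens the inequality.
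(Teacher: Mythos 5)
Your proposal is correct and follows essentially the same route as the paper: part (i) is the identification $TMC(\sigma)=$ number of minimal alphabets in $C_{niso}$ (via \Cref{main} (i)) combined with \Cref{compdisjoint}, and part (ii) is the direct count from \Cref{main} (ii). You merely make explicit the injectivity of the alphabet-to-component correspondence, which the paper leaves implicit.
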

\begin{proof}
	(i) Set $n$ to be the number of minimal alphabets included in $C_{niso}$. By \Cref{main}, $TMC(\sigma) = n$, and with \Cref{compdisjoint}, $n \leq \lvert C_{niso} \rvert$.

	(ii) $WMC(\sigma) \leq \lvert C_{liso} \rvert + \lvert C_{riso} \rvert$ comes directly from \Cref{main} (ii).
\end{proof}

We can then bound $MC(\sigma)$ using the size of the alphabet, as stated in \Cref{bound}.

\begin{proof}[\textbf{Proof of \Cref*{bound}}]
	By \Cref{borne}, we have $MC(\sigma) = TMC(\sigma) + WMC(\sigma) \leq \lvert C_{niso} \rvert + \lvert C_{liso} \rvert +  \lvert C_{riso} \rvert$.
	
	(i) If $B = \emptyset$, $C_{niso} = C = A$ and $C_{liso} = C_{riso} = \emptyset$ so $MC(\sigma) \leq \lvert C_{niso} \rvert = \lvert C \rvert = \lvert A \rvert$.
	
	(ii) If $B = \{b\}$, $\{{}^\omega b^\omega\}$ is the only possible wild minimal component, so $WMC(\sigma) \leq 1$. If $C_{liso} = C_{riso} = \emptyset$, then $MC(\sigma) \leq \lvert  C_{niso} \rvert \leq \lvert C \rvert = \lvert A \rvert - 1$. If $C_{liso} \neq \emptyset$ or $C_{riso} \neq \emptyset$, then $MC(\sigma) \leq \lvert  C_{niso} \rvert + 1 \leq \lvert C \rvert = \lvert A \rvert - 1$.
	
	(iii) If $\lvert B \rvert \geq 2$, $\lvert C_{liso} \rvert \leq \lvert C \rvert - \lvert C_{niso} \rvert$ and $\lvert C_{riso} \rvert \leq \lvert C \rvert - \lvert C_{niso} \rvert$ so $MC(\sigma) \leq 2\lvert C \rvert - \lvert C_{niso} \rvert \leq 2\lvert C \rvert \leq 2\lvert A \rvert -4$.
\end{proof}

The following examples show that these bounds are optimal.
\newline
\newline
(i) The bound $MC(\sigma) = \lvert C \rvert = \lvert A \rvert$ can only be reached if every singleton in $G$ is a minimal alphabet.

\begin{example}
	Let $k \geq 1$, $A = \{a_1,...,a_k\}$ and $\sigma : a_i \mapsto a_ia_i$ for all $i \in \llbracket 1,k \rrbracket$. We have $B = \emptyset$, the minimal alphabets are the 1-periodic alphabets $\{a_i\}$ and they generate the tame minimal components $\{{}^\omega a_i^\omega\}$, therefore $MC(\sigma) = \lvert A \rvert$.
\end{example}

(ii) The bound $MC(\sigma) = \lvert C \rvert = \lvert A \rvert-1$ can be reached in two ways:

• $TMC(\sigma) = \lvert A \rvert-1$ and $WMC(\sigma) = 0$

\begin{example}\label{ex3}
	Let $k \geq 2$, $A = \{b, a_1,...,a_{k-1}\}$, $\sigma : b \mapsto b, a_1 \mapsto a_i b a_i$. The minimal alphabets included in $C_{niso}$ are the $\{a_i\}$ for $i \in \llbracket 1,k-1 \rrbracket$ and they generate the tame minimal components $X_{\sigma|_{\{a_i,b\}}}$. In particular, for all $n \geq 0$, $\sigma^n(a_i) = (a_i b)^{2^n-1} a_i$ so $X_{\sigma|_{\{a_i,b\}}} = \{{}^\omega (a_i b)^\omega\}$. Therefore, $TMC(\sigma) = k-1 = \lvert A \rvert-1$. Also, $\sigma$ is tame so $WMC(\sigma) = 0$.
\end{example}

• $TMC(\sigma) = \lvert A \rvert-2$ and $WMC(\sigma) = 1$

\begin{example}\label{ex4}
	Let $k \geq 2$, $A = \{b, a_1,...,a_{k-1}\}$ and $\sigma : a_i \mapsto a_ia_i$ for all $i \in \llbracket 1,k-2 \rrbracket$, $a_{k-1} \mapsto a_{k-1}b, b \mapsto b$. We have $B = \{b\}$ and $a_{k-1} \in C_{riso}$ so the minimal alphabets included in $C_{niso}$ are the 1-periodic alphabets $\{a_i\}$ for $i \in \llbracket 1,k-2 \rrbracket$ and they generate the tame minimal components $X_{\sigma|_{\{a_i\}}} = \{{}^\omega a_i^\omega\}$ for $i \in \llbracket 1,k-2 \rrbracket$. Moreover, $\{{}^\omega b^\omega\}$ is the unique wild component, therefore $MC(\sigma) = \lvert A \rvert-1$.
\end{example}

(iii) The bound $MC(\sigma) = 2 \lvert C \rvert = 2 \lvert A \rvert - 4$ can only be reached when $\lvert B \rvert = 2$, $C_{liso} = C_{riso} = C$ and all $X({}^\omega LP(c)^\omega)$ and $X({}^\omega RP(c)^\omega)$ are distinct.

\begin{example}\label{ex5}
	Let $k \geq 3$, $A = \{a,b,c_1,...,c_{k-2}\}$ and $\sigma : a \mapsto a, b \mapsto b, c_i \mapsto a^{2i-2}bc_ia^{2i-1}b$. We have $B = \{a,b\}$, and every $c_i$ is growing, left-isolated with period 1 and right-isolated with period 1. It immediatly implies that $X_\sigma$ has no tame minimal components. Also, for all $i \in \llbracket k-2 \rrbracket$, $LP(c_i) = L(c_i) = a^{2i-2}b$ and $RP(c_i) = R(c_i) = a^{2i-1}b$ so the wild minimal components of $X_\sigma$ are the $X({}^\omega (a^ib)^{\omega})$ for all $0 \leq i \leq 2k-5$. Therefore $MC(\sigma) = 2k-4 = 2\lvert C \rvert = 2\lvert A \rvert - 4$.
\end{example}

\begin{remark}
	We can modify \Cref{ex5} step by step by making two wild minimal components equal, which decreases the number of minimal components by 1 (for example, if we change the image of $c_1$ to be $bc_1b$, then $MC(\sigma) = 2k-5$). That way, $MC(\sigma)$ reaches every number between 1 and $2\lvert A \rvert -4$.
\end{remark}

\section{Minimal components on two letters}\label{scase}

Substitutions on the binary alphabet $\{0,1\}$ already generate interesting subshifts and provide a reasonable number of cases to analyse. Note that the number of cases grows faster than exponentially with the size of the alphabet, so let us compute and display the minimal components of $X_\sigma$ for all cases on the alphabet $\{0,1\}$. We recall that, in order to generate a subshift, substitutions must have a growing letter so, without loss of generality, we assume that the letter 0 is always growing. Then the first distinction is wether 1 is growing or bounded.

\subsection{$C = \{0,1\}$}

\Cref{main} tells us that $X_\sigma$ only has tame minimal components, and \Cref{bound} (i) tells us that it has at most two. They are characterized by the minimal alphabets, and we recall from \Cref{generators} that $G$ is uniquely determined by its generators $D$ and $D'$ such that $\{0\} \rightarrow D$ and $\{1\} \rightarrow D'$. The following table displays the minimal components in each case, white cells mean that $X_\sigma$ is minimal, light gray cells mean that $X_\sigma$ is sometimes minimal and sometimes not, and gray cells mean that $X_\sigma$ is not minimal.

\begin{center}
\begin{tabular}{|l|c|c|c|c|}
	\hline
  	\diagbox{$D$}{$D'$} & $\{1\}$ & $\{0\}$ & $\{0,1\}$ \\
	\hline
	$\{0\}$ & \cellcolor{gray} $\{{}^\omega 0^\omega\}$, $\{{}^\omega 1^\omega\}$ & $\{{}^\omega 0^\omega\}$ & \cellcolor{lightgray} $\{{}^\omega 0^\omega\}$ \\
	\hline
	$\{1\}$ & $\{{}^\omega 1^\omega\}$ & \cellcolor{gray} $\{{}^\omega 0^\omega\}$, $\{{}^\omega 1^\omega\}$ & $X_\sigma$ \\
	\hline
	$\{0,1\}$ & \cellcolor{lightgray} $\{{}^\omega 1^\omega\}$ & $X_\sigma$ & $X_\sigma$ \\
	\hline
\end{tabular}
\end{center}

Let us detail the computations. Note that switching 0 and 1 brings the analysis down to six cases.

\subsubsection{$D = \{0\}$, $D' = \{1\}$}

This is one of the two cases where $X_\sigma$ has two minimal components. The orbits in $G$ are
\begin{center}
    \begin{tikzpicture} [node distance = 2cm, on grid, auto]
        \node (q0) [state] {$\{0\}$};
        \node (q1) [state, right = of q0] {$\{1\}$};
        \path [-stealth, thick]
            (q0) edge [loop left] node {} (q0)
            (q1) edge [loop right] node {} (q1);
	\end{tikzpicture}
\end{center}
The minimal alphabets are the 1-periodic alphabets $\{0\}$ and $\{1\}$ so the main sub-substitutions of $\sigma$ are $\sigma|_{\{0\}}$ and $\sigma|_{\{1\}}$ and the minimal components of $X_\sigma$ are $X_{\sigma|_{\{0\}}} = \{{}^\omega 0^\omega\}$ and $X_{\sigma|_{\{1\}}} = \{{}^\omega 1^\omega\}$.

\begin{example}
	Consider the substitution $\sigma : 0 \mapsto 00, 1 \mapsto 11$.
\end{example}

\subsubsection{$D = \{1\}$, $D' = \{1\}$}

The orbits in $G$ are
\begin{center}
    \begin{tikzpicture} [node distance = 2cm, on grid, auto]
        \node (q0) [state] {$\{0\}$};
        \node (q1) [state, right = of q0] {$\{1\}$};
        \path [-stealth, thick]
            (q0) edge node {} (q1)
            (q1) edge [loop right] node {} (q1);
	\end{tikzpicture}
\end{center}

The unique minimal alphabet is the 1-periodic alphabet $\{1\}$ so the unique main sub-substitution of $\sigma$ is $\sigma|_{\{1\}}$ and the unique minimal component of $X_\sigma$ is $X_{\sigma|_{\{1\}}} = \{{}^\omega 1^\omega\}$. In fact this case is quite trivial since $\sigma(\{0,1\}) \in \{1\}^+$, so $X_\sigma = \{{}^\omega 1^\omega\}$ and it is minimal.

\begin{example}
	Consider the substitution $\sigma : 0 \mapsto 1, 1 \mapsto 11$.
\end{example}

\subsubsection{$D = \{1\}$, $D' = \{0\}$}

This is the second case when $X_\sigma$ has two minimal components. The orbits in $G$ are
\begin{center}
    \begin{tikzpicture} [node distance = 2cm, on grid, auto]
        \node (q0) [state] {$\{0\}$};
        \node (q1) [state, right = of q0] {$\{1\}$};
        \path [-stealth, thick]
            (q0) edge [bend left] node {} (q1)
            (q1) edge [bend left] node {} (q0);
	\end{tikzpicture}
\end{center}
The minimal alphabets are the 2-periodic alphabets $\{0\}$ and $\{1\}$ so the main sub-substitutions are $\sigma^2|_{\{0\}}$ and $\sigma^2|_{\{1\}}$ and the minimal components of $X_\sigma$ are $X_{\sigma^2|_{\{0\}}} = \{{}^\omega 0^\omega\}$ and $X_{\sigma^2|_{\{1\}}} = \{{}^\omega 1^\omega\}$.

\begin{example}
	Consider the substitution $\sigma : 0 \mapsto 11, 1 \mapsto 00$.
\end{example}

\subsubsection{$D = \{0,1\}$, $D' = \{1\}$}

The orbits in $G$ are
\begin{center}
    \begin{tikzpicture} [node distance = 2cm, on grid, auto]
        \node (q0) [state] {$\{0\}$};
        \node (q1) [state, right = of q0] {$\{0,1\}$};
        \node (q2) [state, right = of q1] {$\{1\}$};
        \path [-stealth, thick]
            (q0) edge node {} (q1)
            (q1) edge [loop right] node {} (q1)
            (q2) edge [loop right] node {} (q2);
	\end{tikzpicture}
\end{center}
The unique minimal alphabet is the 1-periodic alphabet $\{1\}$ so the unique main sub-substitution of $\sigma$ is $\sigma|_{\{1\}}$ and the unique minimal component of $X_\sigma$ is $X_{\sigma|_{\{1\}}} = \{{}^\omega 1^\omega\}$. In this case, there are examples where $X_\sigma$ is not minimal and others where it is minimal.

\begin{example}
	Consider the substitution $\sigma : 0 \mapsto 101, 1 \mapsto 11$. We have $\{{}^\omega 1^\omega\} \subsetneq X_\sigma = \{{}^\omega 101^\omega\}$ so it is not minimal.
\end{example}

\begin{example}
	Consider the substitution $\sigma : 0 \mapsto 01, 1 \mapsto 11$. We have $X_\sigma = \{{}^\omega 1^\omega\}$ so it is minimal.
\end{example}

\subsubsection{$D = \{0,1\}$, $D' = \{0\}$}

The orbits in $G$ are
\begin{center}
    \begin{tikzpicture} [node distance = 2cm, on grid, auto]
        \node (q0) [state] {$\{1\}$};
        \node (q1) [state, right = of q0] {$\{0\}$};
        \node (q2) [state, right = of q1] {$\{0,1\}$};
        \path [-stealth, thick]
            (q0) edge node {} (q1)
            (q1) edge node {} (q2)
            (q2) edge [loop right] node {} (q2);
	\end{tikzpicture}
\end{center}

The unique minimal alphabet is the 1-periodic alphabet $\{0,1\}$ so the unique main sub-substitution is $\sigma$ itself and $X_\sigma$ is minimal.

\begin{example}
	Consider the Fibonacci substitution $\sigma : 0 \mapsto 01, 1 \mapsto 0$.
\end{example}

\subsubsection{$D = \{0,1\}$, $D' = \{0,1\}$}

The orbits in $G$ are
\begin{center}
    \begin{tikzpicture} [node distance = 2cm, on grid, auto]
        \node (q0) [state] {$\{0\}$};
        \node (q1) [state, right = of q0] {$\{0,1\}$};
        \node (q2) [state, right = of q1] {$\{1\}$};
        \path [-stealth, thick]
            (q0) edge node {} (q1)
            (q1) edge [loop above] node {} (q1)
            (q2) edge node {} (q1);
	\end{tikzpicture}
\end{center}

Similarly to the previous case, $X_\sigma$ is minimal.

\begin{example}
	Consider the Thue-Morse substitution $\sigma : 0 \mapsto 01, 1 \mapsto 10$.
\end{example}

\subsection{$C = \{0\}$, $B = \{1\}$}

In this case, $\sigma$ is l-primitive and the unique minimal alphabet is $\{0\}$. Also, \Cref{bound} (ii) tells us that $X_\sigma$ has a unique minimal component, which will only depend on wether $\sigma$ is tame or wild.

\subsubsection{$\sigma$ is tame}

The alphabet $\{0\} \subset C_{niso}$ is minimal of period 1 so, by \Cref{minimalprimitive}, $\sigma$ is l-primitive. Then, by \Cref{main} (i), $X_\sigma$ is minimal.

\begin{example}
	Consider the Chacon substitution $\sigma : 0 \mapsto 0010, 1 \mapsto 1$.
\end{example}

\subsubsection{$\sigma$ is wild}

We necessarily have $LP(0) \in \{1\}^+$ or $RP(0) \in \{1\}^+$, so, by \Cref{main} (ii), the unique minimal component of $X_\sigma$ is $\{{}^\omega 1^\omega\}$. There are examples where $X_\sigma$ is not minimal and others where it is minimal.

\begin{example}
	In \Cref{ex2}, $X_\sigma = X({}^\omega 101^\omega)$ and its unique minimal component is $\{{}^\omega 1^\omega\}$, so $X_\sigma$ is not minimal.
\end{example}

\begin{example}
	Consider \Cref{ex4} where $k=2$, that is $\sigma : 0 \mapsto 01, 1 \mapsto 1$. We have $X_\sigma = \{{}^\omega 1^\omega\}$ so it is minimal.
\end{example}

\section{Discussion}\label{discuss}

As mentionned in the introduction, the number of total subshifts would be an interesting data to evaluate how far a subshift is from being minimal. This was already discussed by Maloney and Rust in \cite[Section 5]{MR}, but we think that our approach based on giving a combinatorial characterization of the subshifts could also be useful here. As the union of subshifts is also a subshift, it is natural to consider components that are not a disjoint union of subshifts, which we might call \textit{irreducible components}. With this definition, computing the lattice of the irreducible components ordered by inclusion would be a way to describe the larger structure of the subshift. Similarly to minimal components, we would need to distinguish the irreducible components that contain growing letters and the ones that do not. Throughout our paper, \Cref{Xperiodic} can be helpful to describe the first type and \Cref{LXB} seems sufficient to describe the second type.

The question of characterizing and counting the minimal or irreducible components can also be asked for broader types of subshifts, like morphic or linearly recurrent subshifts. Notably, in the morphic case, the number of minimal components might be bigger than in the purely morphic case.

\subsubsection*{Acknowledgements}

The author thanks F. Durand, Š. Starosta and D. Rust for kindly detailing their work. He also thanks J. Cassaigne for his precious insights and the reviewers for their remarks and suggestions.

\bibliography{refs}
\bibliographystyle{plain}

\end{document}